\crefname{hypothesis}{Hypothesis}{Hypotheses}
\crefname{fact}{Fact}{Facts}
\def\func#1{\mathop{\rm #1}\nolimits}
\def\dint{\mathop{\displaystyle \int}}
\def\dsum{\mathop{\displaystyle \sum }}
\begin{document}

\title{Forecasting Public Sentiments via Mean Field Games\thanks{%
Submitted to the editors DATE. 
\funding{This work was funded by the National
Science Foundation grant DMS 2436227.}}}
\author{Michael V. Klibanov \thanks{%
Department of Mathematics and Statistics, University of North Carolina at
Charlotte, Charlotte, NC 28223 (mklibanv@charlotte.edu)} \and Kevin McGoff 
\thanks{%
Department of Mathematics and Statistics, University of North Carolina at
Charlotte, Charlotte, NC 28223 (kmcgoff1@charlotte.edu)} \and Trung Truong 
\thanks{%
Department of Mathematics and Physics, Marshall University, Huntington, WV
25755 (truongt@marshall.edu)}}
\maketitle

\begin{abstract}
{\ Motivated by the goal of forecasting public sentiments, we consider a
forecasting problem in the context of the Mean Field Games theory. We
develop a numerical method, which is a version of the so-called
convexification method. We provide theoretical convergence analysis that
establishes global convergence of the method with a convergence rate. We
also conduct numerical experiments that demonstrate the accurate performance
of the convexification technique and highlight some promising features of
this approach.}
\end{abstract}

% REQUIRED

% REQUIRED
\begin{keywords}
forecasting public opinions, forecasting problem, mean
field games, Carleman estimates, convexification method, global convergence,
numerical studies
\end{keywords}

% REQUIRED
\begin{MSCcodes}
91A16, 35R30
\end{MSCcodes}

\section{Introduction}

\label{sec:1}

{The theory of Mean Field Games (MFGs) has been used to model a wide variety
of social phenomena, including public sentiment \cite%
{Banez,Bauso,Festa,Gao,Stella}. Motivated by the problem of forecasting
public sentiment, we introduce a general forecasting problem within the
framework of MFGs. The focus of this paper is on the development of a
numerical method for that forecasting problem with a rigorously justified
global convergence property, see Definition \ref{def:1.1} for our definition
of this property.}

{\ First, we present the general mathematical model, along with the
forecasting problem we wish to consider. Next, we develop a numerical method
for this problem and carry out its convergence analysis. Finally, we present
results of our numerical studies. Our theory enables us to estimate the
accuracy of the computational solution of the considered problem via the
accuracy of the prescribed initial conditions. We point out that our
forthcoming publication \cite{Kexper} emphasizes the true importance of the
technique of this paper for some societal needs. Indeed, in our forthcoming
work we successfully apply the numerical technique developed here to a large
number of samples of real public sentiment data collected during the
COVID-19 pandemic in 2020-2022.}

{The MFG theory was originated in seminal works of Lasry and Lions \cite%
{LL1,LL2} and Huang, Caines, and Malham\'{e} \cite{Huang1,Huang2}. Upon a
proper mathematical modeling, the MFG theory is capable to govern many
societal phenomena via a system of two coupled nonlinear parabolic PDEs with
two opposite directions of time. We call this the \textquotedblleft Mean
Field Games System" (MFGS). The MFG theory studies the behavior of
infinitely many rationally acting agents. This theory has numerous
applications in a variety of areas, including finance \cite{A,Trusov},
sociology \cite{Bauso}, election dynamics \cite{Chow}, etc.}

{\ Here we consider the MFGS of the following form \cite{A}:%
\begin{equation}
\left. 
\begin{array}{c}
u_{t}(x,t)+\Delta u(x,t)-r(x,t)|\nabla u(x,t)|^{2}/2+\mathop{\displaystyle
\int}\limits_{\Omega }K(x,y)m(y,t)\,dy=0, \\ 
m_{t}(x,t)-\Delta m(x,t)-\mathrm{div}(r(x,t)m(x,t)\nabla u(x,t))=0.%
\end{array}%
\right.  \label{1.1}
\end{equation}
Typically initial and terminal conditions of the form $m(x,0)=p(x)$ and $%
u(x,T)=g(x)$ are considered. We note that this system of equations presents
four main challenges: }

\begin{enumerate}
\item High nonlinearity of both PDEs.

\item Opposite directions of time in two equations (\ref{1.1}).

\item The presence of an integral operator in one of them.

\item The presence of the Laplace operator of the solution $u(x,t)$ of the
first equation in the second one.
\end{enumerate}

In this work we are interested in a forecasting problem (described precisely
in Section \ref{sec:2} below). Informally, this problem supposes that we are
given knowledge of both the density function $m(x,t)$ and the value function 
$u(x,t)$ at the initial time $t=0$, and then it asks us to predict these
functions for future times $t\in (0,T)$. However, a time marching scheme
cannot work here, see Remark 2.1 in Section 2 for a detailed explanation. In
applied situations, one may hope to obtain knowledge of the solution of the
MFGS at the initial time by using past observations of the system (including
surveys) or by numerically experimenting with various choices of initial
conditions to evaluate the response of the system to hypothetical scenarios.
In fact, we take a combination of these approaches in our forthcoming
publication \cite{Kexper}.

\begin{definition} \label{def:1.1} Given a nonlinear mathematical problem, we call a
method of its numerical solution globally convergent if there is a theorem
claiming the convergence of this method to the true solution of that problem
regardless of any \textit{a priori} knowledge of a sufficiently small neighborhood of
this solution.
\end{definition}

{Our goal here is to develop a globally convergent numerical method for our
forecasting problem. However, due to the challenges listed above, it is not
easy to construct such a method. For example, due to the high degree of
nonlinearity of the MFGS, any conventional least squares cost functional for
our problem would be non-convex, which would likely result in the phenomenon
of multiple local minima and ravines of this functional. In contrast, the
so-called \textquotedblleft convexification method" has the global
convergence property, see this section below as well as section 4 for some
details about this method. Therefore, we develop here a version of the
convexification method for our problem. Although some other versions of this
method were developed in the past for some other problems for the MFGS \cite%
{MFGbook,MFG7,MFGCAMWA}, it is impossible to automatically \textquotedblleft
project" them on the problem considered here. Indeed, any problem for which
the convexification was developed in the past has its own peculiarities,
which are necessary to address via a non-trivial convergence analysis. For
example, the key element, the so-called \textquotedblleft Carleman Weight
Function," is different for different problems, which causes significant
differences in the convergence analysis. Thus, the version of the
convexification method for the forecasting problem presented here is new. }

Opinion dynamics have been the subject of substantial interest within
various mathematical modeling frameworks; see \cite{Chamley} and \cite%
{Mossel} for surveys. The MFG theory serves as a particularly appealing
model choice for opinion dynamics, as they attempt to capture the group
dynamics of interacting agents in the large population limit. Here we
provide a brief summary of some previous modeling work in this direction. {%
Shortly after the introduction of MFGs, they were used to model certain
types of opinion dynamics; for example, \cite{Stella} provides early
modeling work in which crowd-seeking opinion dynamics are modeled with MFGs.
Later, motivated by large social networks with multiple distinct groups, 
\cite{Banez} considers a specific multi-population MFG system for opinion
dynamics and proposes a numerical scheme based on an adjoint method. With
similar motivation but a somewhat different perspective, \cite{Bauso}
considers a large population of social networks in which each network
attempts to align its state to the mean state of the other networks. In \cite%
{Festa}, the authors study a certain system of mean field equations that
they call a Forward-Forward Mean Field Game (FF-MFG), although it is not a
proper MFG. Then they model some crowd-seeking opinion dynamics in the large
population limit using their FF-MFG. With motivation again coming from
opinion dynamics on large social networks, the authors of \cite{Gao}
investigate a family of MFGs with a terminal condition designed to push the
population towards consensus, and they propose a numerical solution method
based on generative adversarial networks (GANs). We note that none of these
works provides a numerical solution scheme with rigorous convergence
guarantees. The method proposed here addresses this gap.}

The main mathematical apparatus of this paper is the method of Carleman
estimates see, e.g. books \cite{Isakov,KL} about Carleman estimates. The
tool of Carleman estimates was first introduced in the MFG theory in \cite%
{MFG1}. Since then it was applied in a number of publications to study both
analytical and numerical questions for the MFGS, including CIPs with single
measurement data for this system \cite{Chofri,MFGbook,MFG4,MFG7,MFGCAMWA}.
In addition, we refer to, e.g. \cite{Chow,Liu2,Liu1,Ren,Ren2,Yu} for some
other approaches for inverse problems for the MFGS.

In this paper we construct the above mentioned so-called convexification
method for the forecasting problem. This method was originally introduced in 
\cite{Klib97,Klib95} for two Coefficient Inverse Problems (CIPs) for
hyperbolic PDEs. The key feature of the convexification is that it avoids
the well known phenomenon of multiple local minima and ravines of
conventional least squares cost functionals for CIPs. Thus, this technique
has the global convergence property (as in Definition \ref{def:1.1}). The
convexification method works for both ill-posed problems for quasilinear
PDEs \cite[Section 5.3]{KL} and for CIPs for PDEs \cite{KL}. In particular,
we refer to \cite{MFGbook,MFG7,MFGCAMWA} for other applications of the
convexification method to the MFGS.

The convexification constructs a weighted globally strongly convex
Tikhonov-like functional for the considered problem. As mentioned above, the
key element of the convexification is the presence of a special weight
function in that functional. This is the so-called Carleman Weight Function
(CWF). The CWF is the function, which is involved as the weight function in
the Carleman estimate for the underlying PDE operator. For each version of
the convexification, the global convergence of the gradient descent method
of the minimization of that functional to the true solution of the original
problem is proven and an explicit convergence rate is presented. Thus, a
good initial guess about the solution is not required.

All functions considered below are real-valued. In section 2 we pose our
problem. In section 3 we formulate two Carleman estimates. In section 4 we
first construct the convexification functional. {Next, we provide a high
level intuitive explanation of why this functional is indeed globally
strongly convex.} Its strong convexity is proven in section 5. In section 6
we estimate the accuracy of the minimizer of that functional depending on
the level of the error in the input data. In section 7 we formulate and
prove a global convergence theorem for the gradient descent method of the
minimization of our functional. Finally, section 8 is devoted to numerical
experiments.

\section{Statement of the Problem}

\label{sec:2}

We develop our theory in the $n-$D case. However, numerical testing is
provided for the 1-D case, since the 1-D scenario makes a clear sense for
our target application of public sentiment forecasting. Let $\Omega \subset 
\mathbb{R}^{n}$ be a bounded domain, which we may interpret as the space of
possible sentiment vectors that agents may hold. We assume that the boundary 
$\partial \Omega $ of this domain is piecewise smooth. Let $x\in \Omega $
denote the position (sentiment) of an agent, and let $t\in \left( 0,T\right) 
$ denote time. Let 
\begin{equation}
\gamma \in \left( 0,1\right)  \label{2.00}
\end{equation}%
be a number. Denote%
\begin{equation}
\left. 
\begin{array}{c}
Q_{T}=\Omega \times \left( 0,T\right) ,\text{ }S_{T}=\partial \Omega \times
\left( 0,T\right) , \\ 
Q_{\gamma T}=\Omega \times \left( 0,\gamma T\right) \subset Q_{T}.%
\end{array}%
\right.  \label{2.0}
\end{equation}%
Let $u\left( x,t\right) $ and $m\left( x,t\right) $ be the value and density
functions, respectively. In terms of public sentiments, $m(x,t)$ is the
density of individuals in the population holding sentiment $x$ at time $t$,
and $u(x,t)$ can be interpreted as the expected cumulative reward of an
individual holding sentiment $x$ at time $t$. We consider the same MFGS as
the one (\ref{1.1}). We rewrite it here for the convenience of our further
derivations:%
\begin{equation}
\left. 
\begin{array}{c}
L_{1}\left( u,m\right) =u_{t}(x,t)+\Delta u(x,t){-r(x,t)(\nabla u(x,t))^{2}/2%
}+ \\ 
+\mathop{\displaystyle \int}\limits_{\Omega }K\left( x,y\right) m\left(
y,t\right) dy=0,\text{ }\left( x,t\right) \in Q_{T},%
\end{array}%
\right.  \label{2.1}
\end{equation}%
\begin{equation}
\left. 
\begin{array}{c}
L_{2}\left( u,m\right) =m_{t}(x,t)-\Delta m(x,t){-\mathop{\rm div}%
\nolimits(r(x,t)m(x,t)\nabla u(x,t))}= \\ 
=0,\text{ }\left( x,t\right) \in Q_{T}.%
\end{array}%
\right.  \label{2.2}
\end{equation}

Here, the coefficient $r(x,t)$ characterizes the reaction of the controlled
object to an action applied at the point $x$ \cite[section 5]{MFG1}. The
kernel $K(x,y)$ of the integral in (\ref{2.1}) describes an action on the
agent who occupies the state $x$ by the agent who occupies the state $y$.
This means that the integral term in (\ref{2.1}) describes the average
action on an agent who occupies the state $x$ by all other agents \cite[%
section 5]{MFG1}.

We impose zero Neumann boundary conditions, i.e. the full reflection from
the boundary%
\begin{equation}
\partial _{\nu }u\mid _{S_{T}}=\partial _{\nu }m\mid _{S_{T}}=0,  \label{2.3}
\end{equation}%
where $\nu =\nu \left( x\right) $ is the unit outward looking normal vector
at the point $\left( x,t\right) \in S_{T}.$ Conditions (\ref{2.3}) mean that
there is no flux of agents through the boundary. Denote%
\begin{equation*}
\left. H_{0}^{2}\left( Q_{T}\right) =\left\{ u\in H^{2}\left( Q_{T}\right)
:\partial _{\nu }u\mid _{S_{T}}=0\right\} .\right.
\end{equation*}%
Due to (\ref{2.3}) we assume below that functions $u,m\in H_{0}^{2}\left(
Q_{T}\right).$ In order to the forecasting problem precisely, we assume that
initial conditions for the functions $u$ and $m$ are given: 
\begin{equation}
u\left( x,0\right) =u_{0}\left( x\right) ,\text{ }m\left( x,0\right)
=m_{0}\left( x\right) ,\text{ }x\in \Omega .  \label{2.4}
\end{equation}

Recall that in the conventional case one terminal condition $u\left(
x,T\right) =u_{T}\left( x\right) $ and one initial condition $m\left(
x,0\right) =m_{0}\left( x\right) $ are given \cite{A}.{\ As to the
uniqueness for the conventional case, we refer to \cite[Theorem 2.5]{LL2}
for such a result with a certain monotonicity assumption. }However, if the
modeling goal is to forecast the system into the future, then it is natural
to assume that we only have knowledge of the system at the initial time, as
in (\ref{2.4}).

We focus in this paper on the following problem:

\textbf{Forecasting Problem}. {Find numerically functions }$u\left(
x,t\right) ,m\left( x,t\right) \in H_{0}^{2}\left( Q_{T}\right) ${\
satisfying conditions (\ref{2.1})-(\ref{2.4}) for }$\left( x,t\right) \in
\Omega \times \left( 0,t_{0}\right) ,$ where $t_{0}\in \left( 0,T\right] $
is a certain moment of time in the future. In other words, predict a future
for times $t\in \left( 0,t_{0}\right) .$

Both H\"{o}lder stability estimate and uniqueness for this problem were
proven in \cite{MFG4} and \cite[section 2.5]{MFGbook}, using the Carleman
estimates formulated in the next section.

{\ 
\begin{remark} We point out that the initial data (\ref{2.4}) do not
allow a time marching scheme for the solution of problem (\ref{2.1})-(\ref{2.4}). This is because while the problem of the solution of equation (\ref{2.2}) with the initial data $m\left( x,0\right) =m_{0}\left( x\right) $ is stable if $t>0$ increases (if the function $u$ is known), the problem of the solution of equation (\ref{2.1})
with the initial data $u\left( x,0\right) =u_{0}\left( x\right) $ is unstable if $t>0$ increases. The latter is similar to the 
case of the heat equation with the reversed time $w_{t}+\Delta w=0$\ with the initial condition $w\left( x,0\right) =w_{0}\left(
x\right) $ and with an appropriate boundary condition. It is well
known that the problem of the solution of the latter equation for $t>0$\ is unstable. We conclude, therefore, that the minimizer of the
convexification method (see below) finds a balance between stability and
instability of solutions of equations (\ref{2.1}) and (\ref{2.2}) being
supplied by conditions (\ref{2.3}) and (\ref{2.4}), also, see Remark 8.1.
\end{remark}
}

Let $M>0$ be a certain given number. We assume below that%
\begin{equation}
\left. 
\begin{array}{c}
r\in C^{1}\left( \overline{Q}_{T}\right) ,\text{ }K\in L_{\infty }\left(
\Omega \times \Omega \right) , \\ 
\left\Vert r\right\Vert _{C^{1}\left( \overline{Q}_{T}\right) }\leq M,\text{ 
}\left\Vert K\right\Vert _{L_{\infty }\left( \Omega \times \Omega \right)
}\leq M,%
\end{array}%
\right. \text{ }  \label{2.5}
\end{equation}

\section{Carleman Estimates}

\label{sec:3}

A Carleman estimate for a partial differential operator is always proven
only for the principal part of this operator since it is independent on the
lower order terms of this operator \cite[Lemma 2.1.1]{KL}. Therefore, we
formulate in this section Carleman estimates for principal parts $\partial
_{t}\pm \Delta $ of PDE operators of equations (\ref{2.1}), (\ref{2.2}). It
is important for further derivations that the CWF needs to be the same for
both these operators. Let the number $c>2$ be such that 
\begin{equation}
\frac{c^{2}}{T+c}\geq 2.  \label{4.01}
\end{equation}%
In other words, let 
\begin{equation}
c\geq 1+\sqrt{1+2T},  \label{4.02}
\end{equation}%
and let $\lambda >2$ be a parameter. Our CWF $\varphi _{\lambda }\left(
t\right) $ is:%
\begin{equation}
\varphi _{\lambda }\left( t\right) =\exp \left[ \left( T-t+c\right)
^{\lambda }\right] ,\text{ }t\in \left( 0,T\right) .  \label{4.1}
\end{equation}

\begin{theorem}[\cite{MFG2}, section 2.3.1 of \cite{MFGbook}]\label{Theorem 3.1} {%
Let }$c${\ be the number in (\ref{4.01}), (\ref{4.02}). There exists a
sufficiently large number }$\lambda _{0,1}=\lambda _{0,1}\left(
c,Q_{T}\right) \geq 1$ {depending only on listed parameters such that} 
{the following Carleman estimate holds:}%
\begin{equation*}
\dint\limits_{Q_{T}}\left( u_{t}+\Delta u\right) ^{2}\varphi _{\lambda
}^{2}dxdt\geq 
\end{equation*}%
\begin{equation}
\geq C_{1}\sqrt{\lambda }\dint\limits_{Q_{T}}\left( \nabla u\right)
^{2}\varphi _{\lambda }^{2}dxdt+C_{1}\lambda ^{2}c^{\lambda
}\dint\limits_{Q_{T}}u^{2}\varphi _{\lambda }^{2}dxdt-  \label{4.2}
\end{equation}%
\begin{equation*}
-C_{1}e^{2c^{\lambda }}\dint\limits_{\Omega }\left( \left( \nabla u\right)
^{2}+u^{2}\right) \left( x,T\right) dx-C_{1}\lambda \left( T+c\right)
^{\lambda }e^{2\left( T+c\right) ^{\lambda }}\dint\limits_{\Omega
}u^{2}\left( x,0\right) dx,\text{ }
\end{equation*}%
\begin{equation*}
\forall \lambda \geq \lambda _{0,1},\forall u\in H_{0}^{2}\left(
Q_{T}\right) ,
\end{equation*}%
{where the number }$C_{1}=C_{1}\left( c,Q_{T}\right) >0${\ depends
only on listed parameters.}
\end{theorem}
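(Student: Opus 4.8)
The plan is to establish the Carleman estimate for the operator $\partial_t + \Delta$ with the specified CWF $\varphi_\lambda(t) = \exp[(T-t+c)^\lambda]$ by the standard route of introducing the substitution $v = u\varphi_\lambda$ and expanding the square of the operator applied to $u$, expressed in terms of $v$. First I would compute $\varphi_\lambda' = -\lambda(T-t+c)^{\lambda-1}\varphi_\lambda$ and record the key quantitative facts about this weight: the logarithmic derivative $\varphi_\lambda'/\varphi_\lambda$ is negative, its magnitude grows like $\lambda(T-t+c)^{\lambda-1}$, and — crucially — the condition \eqref{4.01} on $c$ is exactly what makes the relevant lower-order terms dominate, so that the bracket appearing after integration by parts stays positive for $\lambda$ large. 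The whole proof pivots on tracking these elementary but delicate inequalities for the weight.

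The main computation is to write $u = v\varphi_\lambda^{-1}$, substitute into $(u_t + \Delta u)^2\varphi_\lambda^2$, and expand. Since only the $t$-derivative of $\varphi_\lambda$ is nonzero (the weight depends on $t$ alone), I would get $u_t\varphi_\lambda = v_t - (\varphi_\lambda'/\varphi_\lambda)v$ and $\Delta u \cdot \varphi_\lambda = \Delta v$, so that $(u_t+\Delta u)\varphi_\lambda = v_t + \Delta v - (\varphi_\lambda'/\varphi_\lambda)v$. Squaring and integrating over $Q_T$, the dangerous cross terms are the products of $\Delta v$ with $v_t$ and with $(\varphi_\lambda'/\varphi_\lambda)v$. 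The strategy is to integrate these cross terms by parts in $x$ and in $t$ to convert them into a positive combination of $\int(\nabla u)^2\varphi_\lambda^2$ and $\int u^2\varphi_\lambda^2$ (the two good interior terms on the right of \eqref{4.2}), plus boundary contributions. The Neumann condition $\partial_\nu u\mid_{S_T}=0$ (equivalently $\partial_\nu v\mid_{S_T}=0$) is what kills the lateral boundary integral over $S_T$ in the $x$-integration by parts, so those terms vanish and no lateral boundary contribution appears in the final estimate — consistent with \eqref{4.2} containing only the $t=0$ and $t=T$ caps $\int_\Omega(\cdots)(x,T)\,dx$ and $\int_\Omega u^2(x,0)\,dx$.

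The temporal integrations by parts generate exactly these two cap terms: integrating $\int v_t \cdot (\varphi_\lambda'/\varphi_\lambda)v$ and related products in $t$ produces boundary values at $t=0$ and $t=T$, and careful bookkeeping of the weight there gives the factors $e^{2c^\lambda}$ (from $\varphi_\lambda(T)^2 = e^{2c^\lambda}$) and $\lambda(T+c)^\lambda e^{2(T+c)^\lambda}$ (from $\varphi_\lambda(0)^2 = e^{2(T+c)^\lambda}$ together with one factor of the logarithmic derivative at $t=0$). I expect the main obstacle to be precisely this term-by-term control: after all integrations by parts one is left with a quadratic form in $v$, $\nabla v$, $v_t$, $\Delta v$ whose coefficients involve powers of $(T-t+c)$ and $\lambda$, and one must show that, for $\lambda \geq \lambda_{0,1}$, the coefficient of the $\int v^2$ term is bounded below by $C_1\lambda^2 c^\lambda$ and the coefficient of $\int(\nabla v)^2$ by $C_1\sqrt{\lambda}$, after absorbing all lower-order and sign-indefinite remainders into these dominant terms. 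This absorption is where condition \eqref{4.01} is used decisively and where the threshold $\lambda_{0,1}(c,Q_T)$ is determined.

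Since this is the principal-part Carleman estimate, I would invoke \cite[Lemma 2.1.1]{KL} to justify that it suffices to treat $\partial_t + \Delta$ and that the lower-order terms of the full operators $L_1, L_2$ do not affect the estimate. The final step is to translate the inequality back from $v$ to $u$ via $v = u\varphi_\lambda$ (so $\int v^2 = \int u^2\varphi_\lambda^2$, and $\int(\nabla v)^2$ is comparable to $\int(\nabla u)^2\varphi_\lambda^2$ up to absorbable terms), collect the constants into a single $C_1 = C_1(c,Q_T)$, and record the admissible threshold $\lambda_{0,1} = \lambda_{0,1}(c,Q_T)$, yielding \eqref{4.2} for all $\lambda \geq \lambda_{0,1}$ and all $u \in H_0^2(Q_T)$.
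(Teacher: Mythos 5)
First, a note on the comparison itself: the paper contains no proof of \cref{Theorem 3.1} at all --- the estimate is imported from \cite{MFG2} and section 2.3.1 of \cite{MFGbook} --- so your attempt must be judged on its own merits. Your template (substitute $v=u\varphi _{\lambda }$, expand the square, integrate by parts using the Neumann condition, read off cap terms at $t=0$ and $t=T$) is indeed the standard route taken in those references, and your identification of where the cap weights $e^{2c^{\lambda }}=\varphi _{\lambda }^{2}(T)$ and $e^{2\left( T+c\right) ^{\lambda }}=\varphi _{\lambda }^{2}(0)$ come from is correct. (Incidentally, since $\varphi _{\lambda }$ depends only on $t$, one has $\nabla v=\varphi _{\lambda }\nabla u$ exactly, so no absorption is needed to return to $u$.) However, the sketch contains a sign error at precisely the point where the content of the theorem lies, and the decisive step is never carried out.

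Concretely, since $\varphi _{\lambda }^{\prime }/\varphi _{\lambda }=-\lambda w^{\lambda -1}$ with $w=T-t+c$, one has $\left( u_{t}+\Delta u\right) \varphi _{\lambda }=v_{t}+\Delta v+\lambda w^{\lambda -1}v$. The spatial cross term then satisfies, by integration by parts with the Neumann condition,
\begin{equation*}
\int\limits_{\Omega }2\lambda w^{\lambda -1}v\,\Delta v\,dx=-2\lambda w^{\lambda -1}\int\limits_{\Omega }\left( \nabla v\right) ^{2}dx,
\end{equation*}
which is a \emph{large negative} gradient term of order $\lambda c^{\lambda -1}$, not a ``positive combination of $\int \left( \nabla u\right) ^{2}\varphi _{\lambda }^{2}$ and $\int u^{2}\varphi _{\lambda }^{2}$'' as you assert; and the cross term $2v_{t}\Delta v$ contributes no interior term at all, only the caps. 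The only positive interior term the expansion produces is $\lambda \left( \lambda -1\right) w^{\lambda -2}v^{2}$, of order $\lambda ^{2}c^{\lambda -2}$, obtained by integrating $\lambda w^{\lambda -1}\left( v^{2}\right) _{t}$ in time. Meanwhile, Cauchy--Schwarz gives exactly
\begin{equation*}
\int\limits_{\Omega }\left[ \left( \Delta v\right) ^{2}+\lambda ^{2}w^{2\lambda -2}v^{2}\right] dx-2\lambda w^{\lambda -1}\int\limits_{\Omega }\left( \nabla v\right) ^{2}dx\geq 0,
\end{equation*}
with near-equality possible, so the three leading terms cancel identically and by themselves contribute nothing. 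All the positivity claimed in (\ref{4.2}) must therefore be extracted by a careful splitting: absorb the negative gradient term into $\left( 1-\epsilon \right) \left( \Delta v\right) ^{2}+\left( 1-\epsilon \right) ^{-1}\lambda ^{2}w^{2\lambda -2}v^{2}$ with $\epsilon \sim w^{-\lambda }$, cover the resulting deficit in the $v^{2}$ coefficient by half of the term $\lambda \left( \lambda -1\right) w^{\lambda -2}v^{2}$, and then use the leftover slivers $\epsilon \left( \Delta v\right) ^{2}$ and $\tfrac{1}{4}\lambda ^{2}w^{\lambda -2}v^{2}$ together with $\int \left( \nabla v\right) ^{2}=-\int v\Delta v$ to manufacture the positive gradient term, whose coefficient comes out of order $\lambda /\left( T+c\right) $, which exceeds $\sqrt{\lambda }$ only once $\lambda \geq \lambda _{0,1}\left( c,Q_{T}\right) $ --- this is where the threshold and the dependence on $c$ actually enter. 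Your proposal defers all of this to ``careful bookkeeping'' and ``absorption,'' but this extraction of positivity from an identically cancelling quadratic form \emph{is} the proof, not bookkeeping; and as written --- with the cross terms alleged to come out positive --- the absorption scheme you describe would fail.
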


The next \cref{Theorem 3.2} is more tricky. Indeed, this theorem does not
present a conventional Carleman estimate. Rather, we call this a
\textquotedblleft quasi-Carleman estimate". This is because two test
functions $u$ and $v$ are involved here instead of the conventional case of
just a single test function.

\begin{theorem}[a quasi-Carleman estimate, \cite{MFG2}, section
2.3.2 of \cite{MFGbook}] \label{Theorem 3.2}. {Let the number }$c${\ be the same as in (\ref%
{4.01}), (\ref{4.02}). Let the function }$g\in C^{1}\left( \overline{Q}%
_{T}\right) ${.\ Then there exists a sufficiently large number }$%
\lambda _{0,2}=\lambda _{0,2}\left( c,\left\Vert g\right\Vert _{C^{1}\left( 
\overline{Q}_{T}\right) },Q_{T}\right) \geq 1$ {depending only on
listed parameters such that the following quasi-Carleman estimate holds:}%
\begin{equation*}
\dint\limits_{Q_{T}}\left( u_{t}-\Delta u+g\Delta v\right) ^{2}\varphi
_{\lambda }^{2}dxdt\geq
\end{equation*}%
\begin{equation*}
\geq \lambda c^{\lambda -1}\dint\limits_{Q_{T}}\left( \nabla u\right)
^{2}\varphi _{\lambda }^{2}dxdt+\frac{\lambda ^{2}}{4}c^{2\lambda
-2}\dint\limits_{Q_{T}}u^{2}\varphi _{\lambda }^{2}dxdt-
\end{equation*}%
\begin{equation}
-C_{2}\lambda \left( T+c\right) ^{\lambda }\dint\limits_{Q_{T}}\left( \nabla
v\right) ^{2}\varphi _{\lambda }^{2}dxdt - C_{2}\lambda \left( T+c\right) ^{\lambda }e^{2\left( T+c\right) ^{\lambda
}}\dint\limits_{\Omega }u^{2}\left( x,0\right) dx, \label{4.3}
\end{equation}%
{for all $\lambda \geq \lambda _{0,2}$ and $u,v\in H_{0}^{2}\left(Q_{T}\right)$, where the number }$C_{2}=C_{2}\left( c,\left\Vert g\right\Vert
_{C^{1}\left( \overline{Q}_{T}\right) },Q_{T}\right) >0${\ depends only
on listed parameters.}
\end{theorem}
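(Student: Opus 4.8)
The plan is to derive a pointwise weighted identity for the backward parabolic operator $\partial_t-\Delta$ and to bury the entire second-order coupling $g\Delta v$ inside a perfect square that is simply discarded. Write $\varphi=\varphi_\lambda$ and $\psi=(T-t+c)^\lambda=\ln\varphi$, so that $\psi_t=-\lambda(T-t+c)^{\lambda-1}<0$ and $\psi_{tt}=\lambda(\lambda-1)(T-t+c)^{\lambda-2}>0$. Substituting $w=\varphi u$ and $z=\varphi v$ and using that $\varphi$ depends on $t$ only, one has $\varphi\left(u_t-\Delta u+g\Delta v\right)=w_t-\psi_t w-\Delta w+g\Delta z$, so the left-hand side of (\ref{4.3}) equals $\int_{Q_T}\left(w_t-\psi_t w-\Delta w+g\Delta z\right)^2\,dxdt$. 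Note that the sign of $\Delta$ is opposite to that in \cref{Theorem 3.1}; this forces a direct computation (a time reversal would change the weight (\ref{4.1})), but it is also exactly what flips the $t=T$ gradient trace to a favorable sign, which is why (\ref{4.3}) carries no boundary terms at $t=T$.

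The key algebraic step is the pointwise identity
\begin{equation*}
\left(w_t-\psi_t w-\Delta w+g\Delta z\right)^2 = \left(w_t-\Delta w+g\Delta z\right)^2 + \psi_t^2 w^2 - 2\psi_t w\,w_t + 2\psi_t w\,\Delta w - 2\psi_t w\,g\Delta z ,
\end{equation*}
obtained by regrouping the ten terms of the square. I would then discard the nonnegative block $\left(w_t-\Delta w+g\Delta z\right)^2$, which removes at one stroke the two dangerous products $-2w_t\Delta w$ and $2w_t\,g\Delta z$ together with $-2\Delta w\,g\Delta z$. In particular the gradient trace $\int_\Omega|\nabla w(x,0)|^2dx$ that a naive integration by parts of $-2\int w_t\Delta w$ would generate never appears, which is precisely why (\ref{4.3}) contains no $\nabla u(x,0)$ term. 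What remains are four terms, each carrying the favorable Carleman factor $\psi_t$.

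It then remains to integrate these four terms over $Q_T$. Using the Neumann conditions (\ref{2.3}), i.e.\ $\partial_\nu w=\partial_\nu z=0$ on $S_T$, I get $2\int_{Q_T}\psi_t w\,\Delta w=-2\int_{Q_T}\psi_t|\nabla w|^2\geq 2\lambda c^{\lambda-1}\int_{Q_T}|\nabla w|^2$ (no spatial boundary term); $-2\int_{Q_T}\psi_t w\,w_t=\int_{Q_T}\psi_{tt}w^2+\lambda c^{\lambda-1}\int_\Omega w^2(x,T)dx-\lambda(T+c)^{\lambda-1}\int_\Omega w^2(x,0)dx$, whose first two summands are nonnegative (the $t=T$ trace is discarded) and whose last summand is the sole initial term; and $\int_{Q_T}\psi_t^2 w^2\geq \lambda^2 c^{2\lambda-2}\int_{Q_T}w^2$, supplying the dominant $u^2$ contribution. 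For the coupling I integrate by parts once in space, $-2\int_{Q_T}\psi_t w\,g\Delta z=2\int_{Q_T}\psi_t g\,\nabla w\cdot\nabla z+2\int_{Q_T}\psi_t w\,\nabla g\cdot\nabla z$, and apply Young's inequality with the $t$-dependent weight $|\psi_t|$ to split each product into a $w$-part, which is absorbed into the dominant gradient and $w^2$ terms above, and a $z$-part, which is bounded by $C_2\lambda(T+c)^\lambda\int_{Q_T}|\nabla z|^2$ with $C_2$ controlled by $\|g\|_{C^1(\overline Q_T)}$. Reverting to $u,v$ via $w^2=u^2\varphi^2$, $|\nabla w|^2=|\nabla u|^2\varphi^2$, $|\nabla z|^2=|\nabla v|^2\varphi^2$ yields (\ref{4.3}) with constants $\lambda c^{\lambda-1}$ and $\tfrac{\lambda^2}{4}c^{2\lambda-2}$.

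I expect the coupling term $g\Delta v$ to be the main obstacle, and it is the reason for the name \textquotedblleft quasi\textquotedblright-Carleman estimate: a crude estimate of it would leave an uncontrollable $(\Delta v)^2$ term on the right, whereas (\ref{4.3}) tolerates only $(\nabla v)^2$. The perfect-square grouping is what avoids this, because the products $-2\Delta w\,g\Delta z$ and $2w_t\,g\Delta z$ — which would otherwise force a $(\Delta v)^2$ term or, after integration by parts, an uncontrollable $\nabla v_t$ term — are thrown away, while the surviving coupling $-2\psi_t w\,g\Delta z$ is benign since its Carleman factor lets a single spatial integration by parts trade $\Delta z$ for $\nabla z$. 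The remaining work is quantitative bookkeeping: choosing $\lambda_{0,2}=\lambda_{0,2}(c,\|g\|_{C^1(\overline Q_T)},Q_T)$ large enough that the Young's-inequality remainders are dominated by the reserved halves of $-2\int_{Q_T}\psi_t|\nabla w|^2$ and $\int_{Q_T}\psi_t^2 w^2$, using the elementary bounds $c\leq T-t+c\leq T+c$ and $(T+c)^{\lambda-1}\leq(T+c)^\lambda$ together with the choice of $c$ in (\ref{4.01})--(\ref{4.02}).
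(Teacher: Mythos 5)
Your proof is correct: the factorization $\varphi_\lambda\left(u_t-\Delta u+g\Delta v\right)=w_t-\psi_t w-\Delta w+g\Delta z$ with $w=\varphi_\lambda u$, $z=\varphi_\lambda v$, the discarding of the nonnegative square $\left(w_t-\Delta w+g\Delta z\right)^2$ (which is indeed what keeps both the $\nabla u(x,0)$ trace and any $(\Delta v)^2$ term out of the estimate), and the subsequent integrations by parts and Young absorptions all check out quantitatively, producing the coefficients $\lambda c^{\lambda-1}$ and $\tfrac{\lambda^{2}}{4}c^{2\lambda-2}$ and exactly the two penalty terms in (\ref{4.3}), with $\lambda_{0,2}$ and $C_2$ depending on $\left\Vert g\right\Vert_{C^{1}\left(\overline{Q}_{T}\right)}$ as required. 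The paper itself does not prove \cref{Theorem 3.2} but quotes it from \cite{MFG2} and \cite{MFGbook}; your argument is the standard Carleman-type computation used in those sources, so this is essentially the same approach.
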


\section{The Convexification Functional}

\label{sec:4}

\subsection{The functional}

\label{sec:4.1}

First, we define the set of pairs $\left( u,m\right) $ of functions, on
which we want to find numerically an approximate solution of problem (\ref%
{2.1})-(\ref{2.4}). We need functions 
\begin{equation}
u,m\in C^{3}\left( \overline{Q}_{T}\right) \cap H_{0}^{2}\left( Q_{T}\right)
.  \label{5.1}
\end{equation}%
Denote 
\begin{equation}
k_{n}=\left[ \frac{n+1}{2}\right] +4,  \label{5.2}
\end{equation}%
where $\left[ \left( n+1\right) /2\right] $ denotes the largest integer
which does not exceed $\left( n+1\right) /2.$ By Sobolev embedding theorem 
\begin{equation}
\left. 
\begin{array}{c}
H^{k_{n}}\left( Q_{T}\right) \subset C^{3}\left( \overline{Q}_{T}\right) ,
\\ 
\left\Vert v\right\Vert _{C^{3}\left( \overline{Q}_{T}\right) }\leq
C\left\Vert v\right\Vert _{H^{k_{n}}\left( Q_{T}\right) },\text{ }\forall
v\in H^{k_{n}}\left( Q_{T}\right) ,%
\end{array}%
\right.  \label{5.3}
\end{equation}%
where the number $C=C\left( Q_{T}\right) $ depends only on the domain $%
Q_{T}\subset \mathbb{R}^{n+1}.$ Introduce the following spaces:%
\begin{equation}
\left. 
\begin{array}{c}
H_{2}^{k_{n}}\left( Q_{T}\right) =\left\{ 
\begin{array}{c}
\left( u,m\right) \in H^{k_{n}}\left( Q_{T}\right) \times H^{k_{n}}\left(
Q_{T}\right) : \\ 
\left\Vert \left( u,m\right) \right\Vert _{H_{2}^{k_{n}}\left( Q_{T}\right)
}^{2}=\left\Vert u\right\Vert _{H^{k_{n}}\left( Q_{T}\right)
}^{2}+\left\Vert m\right\Vert _{H^{k_{n}}\left( Q_{T}\right) }^{2}<\infty%
\end{array}%
\right\} , \\ 
H_{2,0}^{k_{n}}\left( Q_{T}\right) =\left\{ 
\begin{array}{c}
\left( u,m\right) \in H_{2}^{k_{n}}\left( Q_{T}\right) : \\ 
\partial _{\nu }u\mid _{S_{T}}=\partial _{\nu }m\mid _{S_{T}}=0,%
\end{array}%
\right\} , \\ 
H_{2,0,0}^{k_{n}}\left( Q_{T}\right) =\left\{ \left( u,m\right) \in
H_{2,0}^{k_{n}}\left( Q_{T}\right) :u\left( x,0\right) =m\left( x,0\right)
=0\right\} , \\ 
H_{2,0}^{k_{n}}\left( \Omega \right) =\left\{ 
\begin{array}{c}
\left( f\left( x\right) ,g\left( x\right) \right) :\left\Vert \left(
f,g\right) \right\Vert _{H_{2}^{k_{n}}\left( \Omega \right) }^{2}= \\ 
=\left\Vert f\right\Vert _{H^{k_{n}}\left( \Omega \right) }^{2}+\left\Vert
g\right\Vert _{H^{k_{n}}\left( \Omega \right) }^{2}<\infty , \\ 
\partial _{\nu }f\mid _{\partial \Omega }=\partial _{\nu }g\mid _{\partial
\Omega }=0.%
\end{array}%
\right\} , \\ 
\text{below }\left[ ,\right] \text{ denotes the scalar product in }%
H_{2}^{k_{n}}\left( Q_{T}\right) .%
\end{array}%
\right.  \label{5.30}
\end{equation}%
Recall that the number $\gamma $ and the domain $Q_{\gamma T}$ are defined
in (\ref{2.00}) and (\ref{2.0}) respectively. In addition, we define the
space $H^{1,0}\left( Q_{\gamma T}\right) $ as:%
\begin{equation*}
H^{1,0}\left( Q_{\gamma T}\right) =\left\{ u:\left\Vert u\right\Vert
_{H^{1,0}\left( Q_{\gamma T}\right) }^{2}=\left\Vert \nabla u\right\Vert
_{L_{2}\left( Q_{\gamma T}\right) }^{2}+\left\Vert u\right\Vert
_{L_{2}\left( Q_{\gamma T}\right) }^{2}<\infty ,\text{ }\gamma \in \left(
0,1\right) \right\} .
\end{equation*}

Let $R>0$ be an arbitrary number. We assume that initial conditions in (\ref%
{2.4}) are such that 
\begin{equation}
\left( u_{0},m_{0}\right) \in H_{2,0}^{k_{n}}\left( \Omega \right)
,\left\Vert \left( u_{0},m_{0}\right) \right\Vert _{H_{2}^{k_{n}}\left(
\Omega \right) }<R.  \label{5.31}
\end{equation}%
We define the set $B\left( R\right) \subset H_{2,0}^{k_{n}}\left(
Q_{T}\right) ,$ in which we search our solution of Problem (\ref{2.1})-(\ref%
{2.4}), as:%
\begin{equation}
B\left( R\right) =\left\{ 
\begin{array}{c}
\left( u,m\right) \in H_{2,0}^{k_{n}}\left( Q_{T}\right) :\left\Vert \left(
u,m\right) \right\Vert _{H_{2}^{k_{n}}\left( Q_{T}\right) }<R, \\ 
u\left( x,0\right) =u_{0}\left( x\right) ,\text{ }m\left( x,0\right)
=m_{0}\left( x\right)%
\end{array}%
\right\} .  \label{5.4}
\end{equation}%
By (\ref{5.30}) and (\ref{5.4})%
\begin{equation}
B\left( R\right) \subset H_{2,0}^{k_{n}}\left( Q_{T}\right) .  \label{6.2}
\end{equation}%
Also, by (\ref{5.1})-(\ref{5.4})%
\begin{equation}
\left. 
\begin{array}{c}
u,m\in C^{3}\left( \overline{Q}_{T}\right) \cap H_{0}^{2}\left( Q_{T}\right)
,\text{ }\forall \left( u,m\right) \in B\left( R\right) \\ 
\left\Vert u\right\Vert _{C^{3}\left( \overline{Q}_{T}\right) }\leq CR,\text{
}\left\Vert m\right\Vert _{C^{3}\left( \overline{Q}_{T}\right) }\leq CR,%
\text{ }\forall \left( u,m\right) \in B\left( R\right) .%
\end{array}%
\right.  \label{5.5}
\end{equation}

Let $L_{1}\left( u,m\right) $ and $L_{2}\left( u,m\right) $ be two operators
in the left hand sides of of equations (\ref{2.1}), (\ref{2.2}). Denote%
\begin{equation}
q_{\lambda }\left( c,T\right) =q=\frac{1}{\lambda \left( T+c\right)
^{\lambda -1}}.  \label{5.51}
\end{equation}%
Introduce the convexification functional%
\begin{equation}
\left. 
\begin{array}{c}
J_{\lambda ,\alpha }\left( u,m\right) :\overline{B\left( R\right) }%
\rightarrow \mathbb{R}, \\ 
J_{\lambda ,\alpha }\left( u,m\right) =e^{-2ac^{\lambda }}%
\mathop{\displaystyle \int}\limits_{Q_{T}}\left[ \left( L_{1}\left(
u,m\right) \right) ^{2}+qd\left( L_{2}\left( u,m\right) \right) ^{2}\right]
\varphi _{\lambda }^{2}dxdt+ \\ 
+\alpha \left( \left\Vert u\right\Vert _{H^{k_{n}}\left( Q_{T}\right)
}^{2}+\left\Vert m\right\Vert _{H^{k_{n}}\left( Q_{T}\right) }^{2}\right) ,%
\end{array}%
\right.  \label{5.6}
\end{equation}%
where $\alpha \in \left( 0,1\right) $ is the regularization parameter and $%
a>1$ and $d>0$ are two numbers of one's choice. Both of these numbers are
independent on $\lambda $, and both should be chosen computationally. Here
the multiplier $e^{-2ac^{\lambda }}$ is taken to partially balance the
integral term with the regularization term. Indeed, by (\ref{4.1}) 
\begin{equation}
\max_{t\in \left[ 0,T\right] }\varphi _{\lambda }^{2}\left( t\right) =\exp
\left( 2\left( T+c\right) ^{\lambda }\right) .  \label{5.7}
\end{equation}%
Our goal below is to solve the following problem:

\textbf{Minimization Problem. }{Minimize the functional }$J_{\lambda ,\alpha
}\left( u,m\right) ${\ on the set }$\overline{B\left( R\right) }.$

\subsection{A high level intuitive sketch on why the functional $J_{\protect%
\lambda ,\protect\alpha }$ is strongly convex on $\overline{B\left( R\right) 
}$}

\label{sec:4.2}

{Roughly speaking, Carleman estimates of \cref{Theorem 3.1} and %
\cref{Theorem 3.2} basically ensure that the squares of weighted $%
L_{2}\left( Q_{T}\right) -$norms of principal parts of operators $L_{1}$ and 
$L_{2}$ dominate the squares of weighted $L_{2}\left( Q_{T}\right) -$norms
of lower order derivatives, where weight is the CWF $\varphi_{\lambda
}\left( t\right) $ in (\ref{4.1}), and the parameter $\lambda \geq 1$ is
sufficiently large. However, since those principal parts are linear
operators, then squares of their weighted $L_{2}\left( Q_{T}\right) -$norms
are quadratic functionals. Hence, again very roughly (i.e. ignoring initial
and terminal conditions), those quadratic functionals are strongly convex on
a corresponding Hilbert space. On the other hand, the above mentioned
domination ensures that the presence of terms with lower order derivatives
in operators $L_{1}$ and $L_{2}$ does not affect the global convexity
property, at least when working with the set $\overline{B\left( R\right) }.$
Since $R>0$ is an arbitrary number, then this is global strong convexity. }

\section{The Global Strong Convexity Property}

\label{sec:5}

In this section we prove the key analytical result of our paper: the strong
convexity property of the functional $J_{\lambda ,\alpha }\left( u,m\right) $
on the set $\overline{B\left( R\right) }.$ Since smallness assumptions are
not imposed on R, then we call this \textquotedblleft global strong
convexity property". Recall that the number $M>0$ was introduced in (\ref%
{2.5}) and that in (\ref{5.6}) we must have $a>1.$

\begin{theorem}[the central result]\label{Theorem 5.1} {Assume that conditions (\ref%
{2.00}), (\ref{2.5}), (\ref{4.01}), (\ref{4.1}), (\ref{5.2}), (\ref{5.51})
and (\ref{5.6}) hold. Then:}

{1. At each point }$\left( u,m\right) \in \overline{B\left( R\right) }$%
{\ there exists Fr\'{e}chet derivative }$J_{\lambda ,\alpha }^{\prime
}\left( u,m\right) \in H_{2,0,0}^{k_{n}}\left( Q_{T}\right) ${\ of the
functional }$J_{\lambda ,\alpha }\left( u,m\right) .${\ Furthermore,
this derivative is Lipschitz continuous on }$\overline{B\left( R\right) }$%
{, i.e. there exists a number }$D=D\left( R,\lambda ,c,a,d,M,\alpha
,Q_{T}\right) ${\ depending only on listed parameters such that} 
\begin{equation}
\left. 
\begin{array}{c}
\left\Vert J_{\lambda ,\alpha }^{\prime }\left( u_{1},m_{1}\right)
-J_{\lambda ,\alpha }^{\prime }\left( u_{2},m_{2}\right) \right\Vert
_{H_{2}^{k_{n}}\left( Q_{T}\right) }\leq D\left\Vert \left(
u_{1},m_{1}\right) -\left( u_{2},m_{2}\right) \right\Vert
_{H_{2}^{k_{n}}\left( Q_{T}\right) }, \\ 
\forall \left( u_{1},m_{1}\right) ,\left( u_{2},m_{2}\right) \in \overline{%
B\left( R\right) }.%
\end{array}%
\right.  \label{6.3}
\end{equation}

{2. There exists a sufficiently large number }$\lambda _{1}=\lambda
_{1}\left( M,R,c,a,d,Q_{T}\right) \geq 1${\ depending only on listed
parameters such that if }$\lambda \geq \lambda _{1}${\ and if the
regularization parameter}%
\begin{equation}
\alpha \in \left[ 2e^{-\left( a-1\right) c^{\lambda }},1\right) ,
\label{6.4}
\end{equation}%
{then the functional }$J_{\lambda ,\alpha }\left( u,m\right) ${\
is strongly convex on the set }$\overline{B\left( R\right) }.${\ More
precisely, the} {following strong convexity estimate is valid:}%
\begin{equation}
\left. 
\begin{array}{c}
J_{\lambda ,\alpha }\left( u_{2},m_{2}\right) -J_{\lambda ,\alpha }\left(
u_{1},m_{1}\right) -\left[ J_{\lambda ,\alpha }^{\prime }\left(
u_{1},m_{1}\right) ,\left( u_{2}-u_{1},m_{2}-m_{1}\right) \right] \geq \\ 
\geq C_{3}e^{-2ac^{\lambda }}\left( c/\left( T+c\right) \right) ^{\lambda
}\exp \left[ 2\left( T\left( 1-\gamma \right) +c\right) ^{\lambda }\right]
\times \\ 
\times \left( \left\Vert u_{2}-u_{1}\right\Vert _{H^{1,0}\left( Q_{\gamma
T}\right) }^{2}+\left\Vert m_{2}-m_{1}\right\Vert _{H^{1,0}\left( Q_{\gamma
T}\right) }^{2}\right) + \\ 
+\left( \alpha /2\right) \left\Vert \left( u_{2}-u_{1},m_{2}-m_{1}\right)
\right\Vert _{H_{2}^{k_{n}}\left( Q_{T}\right) }^{2}, \\ 
\forall \left( u_{1},m_{1}\right) ,\left( u_{2},m_{2}\right) \in \overline{%
B\left( R\right) },\text{ }\forall \lambda \geq \lambda _{1},%
\end{array}%
\right.  \label{6.5}
\end{equation}%
{where the number }$C_{3}=C_{3}\left( M,R,c,d,Q_{T}\right) >0${\
depends only on listed parameters.}

3. Let $\lambda \geq \lambda_{1}$ and let (\ref{6.4}) holds. Then there exists unique minimizer \\ $\left( u_{\min ,\lambda},m_{\min,\lambda}\right) \in \overline{B\left( R\right)}$ of the functional $J_{\lambda ,\alpha }\left( u,m\right) ${\ on the set }$\overline{B\left( R\right)}$  and the following estimate holds
\begin{equation}
\left[ J_{\lambda ,\alpha }^{\prime }\left( u_{\min ,\lambda },m_{\min
,\lambda }\right) ,\left( u_{\min ,\lambda }-u,m_{\min ,\lambda }-m\right) %
\right] \leq 0,\text{ }\forall \left( u,m\right) \in \overline{B\left(
R\right) }.  \label{6.6}
\end{equation}
\end{theorem}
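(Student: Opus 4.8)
The plan is to prove the three parts of \cref{Theorem 5.1} in the stated order, since each part feeds into the next.

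\textbf{Part 1 (Fr\'{e}chet differentiability and Lipschitz continuity of the derivative).}
First I would expand $J_{\lambda ,\alpha }\left( u+h_{1},m+h_{2}\right) -J_{\lambda ,\alpha }\left( u,m\right) $ for an increment $\left( h_{1},h_{2}\right) \in H_{2,0,0}^{k_{n}}\left( Q_{T}\right) $, organizing the result as a linear part plus a quadratic remainder. The operators $L_{1}$ and $L_{2}$ are at most quadratic in $\left( u,m,\nabla u,\nabla m,\Delta u,\Delta m\right) $: $L_{1}$ contains the term $-r\left( \nabla u\right) ^{2}/2$ and the linear integral term, while $L_{2}$ contains the bilinear term $-\func{div}\left( rm\nabla u\right) $. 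Hence $\left( L_{1}\right) ^{2}$ and $\left( L_{2}\right) ^{2}$ are polynomials of degree four, and their directional derivatives are well-defined. The linear-in-$\left( h_{1},h_{2}\right) $ portion of the expansion, after integrating against $\varphi _{\lambda }^{2}$ and adding the $\alpha $-regularization contribution, defines a bounded linear functional on $H_{2}^{k_{n}}\left( Q_{T}\right) $; by Riesz representation this identifies $J_{\lambda ,\alpha }^{\prime }\left( u,m\right) $, and the vanishing of increments at $t=0$ together with the Neumann conditions places it in $H_{2,0,0}^{k_{n}}\left( Q_{T}\right) $. The Lipschitz estimate \cref{6.3} then follows from the uniform $C^{3}$ bounds \cref{5.5} on $B\left( R\right) $, the bound \cref{2.5} on $r$ and $K$, and the fact that the coefficients appearing in the derivative depend polynomially on the arguments; all dependence on $\lambda ,c,a,d$ is absorbed into the constant $D$.

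\textbf{Part 2 (strong convexity).}
This is the heart of the theorem and the main obstacle. The strategy is the standard convexification device: for two points in $\overline{B\left( R\right) }$, I would set $\left( p,s\right) =\left( u_{2}-u_{1},m_{2}-m_{1}\right) $, which lies in $H_{2,0,0}^{k_{n}}\left( Q_{T}\right) $ since the two pairs share the same initial data $\left( u_{0},m_{0}\right) $. Expanding the left-hand side of \cref{6.5}, the zeroth- and first-order Taylor terms cancel, leaving a genuinely quadratic expression in $\left( p,s\right) $ (the quartic terms producing lower-order contributions controlled by $R$). The key move is to split the quadratic form into its leading part, namely $e^{-2ac^{\lambda }}\int_{Q_{T}}\left[ \left( p_{t}+\Delta p\right) ^{2}+qd\left( s_{t}-\Delta s\right) ^{2}\right] \varphi _{\lambda }^{2}$, plus cross and lower-order terms. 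To the $\left( p_{t}+\Delta p\right) ^{2}$ term I apply the Carleman estimate \cref{4.2} of \cref{Theorem 3.1}; to the $\left( s_{t}-\Delta s\right) ^{2}$ term, after absorbing the coupling through $-\func{div}\left( r\,\cdot\,\nabla u\right) $ into the form $s_{t}-\Delta s+g\Delta p$ with $g$ built from $r$ and the background, I apply the quasi-Carleman estimate \cref{4.3} of \cref{Theorem 3.2}. The quasi-Carleman estimate deliberately produces a negative $\left( \nabla v\right) ^{2}$-term that must be dominated; this is precisely why the weight $q$ and the free constant $d$ appear in \cref{5.6}. I would choose $\lambda _{1}$ large enough that the positive gradient and $L_{2}$ terms coming from \cref{4.2}, scaled by $q$ via \cref{5.51}, absorb the negative $\left( \nabla p\right) ^{2}$-term from \cref{4.3}, and simultaneously absorb all lower-order terms carrying factors of $M$ and $R$. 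The boundary term at $t=T$ in \cref{4.2} has no counterpart in the functional and must be discarded after verifying it contributes with the correct sign or is otherwise dominated; the initial-data terms at $t=0$ vanish because $p\left( x,0\right) =s\left( x,0\right) =0$. Restricting the remaining positive integrals from $Q_{T}$ down to $Q_{\gamma T}$ and using the pointwise lower bound $\varphi _{\lambda }^{2}\geq \exp \left[ 2\left( T\left( 1-\gamma \right) +c\right) ^{\lambda }\right] $ on $Q_{\gamma T}$ yields the explicit weight in \cref{6.5}; the constraint \cref{6.4} on $\alpha $ ensures the regularization term $\alpha \left\Vert \left( p,s\right) \right\Vert ^{2}$ is not swamped by the $e^{-2ac^{\lambda }}$ prefactor, giving the clean $\left( \alpha /2\right) $-term on the right-hand side.

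\textbf{Part 3 (existence, uniqueness, and the variational inequality).}
Once strong convexity \cref{6.5} is established, Part 3 is a routine consequence of convex analysis on the bounded, closed, convex set $\overline{B\left( R\right) }$ in the Hilbert space $H_{2}^{k_{n}}\left( Q_{T}\right) $. A strongly convex functional with a Lipschitz-continuous Fr\'{e}chet derivative on a closed convex set attains its minimum at a unique point $\left( u_{\min ,\lambda },m_{\min ,\lambda }\right) $; this is a standard result (see, e.g., \cite[Lemma 5.2.1 or equivalent]{KL}). The variational inequality \cref{6.6} is the first-order optimality condition at that minimizer: since $\overline{B\left( R\right) }$ is convex, for any $\left( u,m\right) $ in it the point $\left( u_{\min ,\lambda },m_{\min ,\lambda }\right) +\tau \left[ \left( u,m\right) -\left( u_{\min ,\lambda },m_{\min ,\lambda }\right) \right] $ stays in $\overline{B\left( R\right) }$ for $\tau \in \left[ 0,1\right] $, and differentiating the nonnegative quantity $J_{\lambda ,\alpha }$ of this path at $\tau =0$ gives $\left[ J_{\lambda ,\alpha }^{\prime }\left( u_{\min ,\lambda },m_{\min ,\lambda }\right) ,\left( u-u_{\min ,\lambda },m-m_{\min ,\lambda }\right) \right] \geq 0$, which is \cref{6.6} after a sign rearrangement. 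I expect no difficulty here beyond citing the appropriate convex-optimization lemma already available in the convexification literature.
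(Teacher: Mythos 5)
Your proposal is correct and follows essentially the same route as the paper's proof: the decomposition of $J_{\lambda ,\alpha }$ into linear and quadratic parts in the increment with Riesz representation for Part 1, the pairing of the Carleman estimate (\ref{4.2}) applied to $\left( h_{1t}+\Delta h_{1}\right) ^{2}$ with the quasi-Carleman estimate (\ref{4.3}) applied to the second operator (written as $h_{2t}-\Delta h_{2}+g\Delta h_{1}$ with $g$ built from $rm_{1}$), the weight $q$ of (\ref{5.51}) playing precisely the absorption role you describe for the negative gradient term, restriction to $Q_{\gamma T}$ with the lower bound on $\varphi _{\lambda }^{2}$ for Part 2, and standard convex analysis (the paper cites Lemma 2.1 and Theorem 2.1 of \cite{Bak}) for Part 3. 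The one point to tighten is the $t=T$ boundary term from (\ref{4.2}): it enters with a \emph{negative} sign scaled by $e^{-2\left( a-1\right) c^{\lambda }}$ after multiplication by the prefactor $e^{-2ac^{\lambda }}$, and the paper's exact use of condition (\ref{6.4}) is that $\alpha /2>e^{-2\left( a-1\right) c^{\lambda }}$ allows half of the regularization term to absorb this trace term, which is what produces the clean $\alpha /2$ coefficient in (\ref{6.5}); your sketch contains both ingredients (the problematic boundary term and the role of (\ref{6.4})) but leaves that link implicit.
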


\begin{proof} Below in this paper $C_{3}=C_{3}\left(
M,R,c,a,d,Q_{T}\right) >0$ denotes different numbers depending only on
listed parameters. Let $\left( u_{1},m_{1}\right) ,\left( u_{2},m_{2}\right)
\in \overline{B\left( R\right) }$ be two arbitrary points. Denote $\left(
h_{1},h_{2}\right) =\left( u_{2}-u_{1},m_{2}-m_{1}\right) .$ Then 
\begin{equation}
\left( u_{2},m_{2}\right) =\left( u_{1}+h_{1},m_{1}+h_{2}\right) .
\label{6.7}
\end{equation}%
In addition, by (\ref{5.4}), (\ref{6.2}) and triangle inequality 
\begin{equation}
\left( h_{1},h_{2}\right) \in \overline{B_{0}\left( 2R\right) }=\left\{
\left( u,m\right) \in H_{2,0,0}^{k_{n}}\left( Q_{T}\right) :\left\Vert
\left( u,m\right) \right\Vert _{H_{2}^{k_{n}}\left( Q_{T}\right) }\leq
2R\right\} .  \label{6.8}
\end{equation}%
It is convenient to represent the functional {\ }$J_{\lambda ,\alpha
}\left( u,m\right) $ as the sum of three functionals and evaluate each of
them separately, i.e. 
\begin{equation}
\left. 
\begin{array}{c}
J_{1,\lambda ,\alpha }\left( u,m\right) =e^{-2ac^{\lambda
}}\dint\limits_{Q_{T}}\left( L_{1}\left( u,m\right) \right) ^{2}\varphi
_{\lambda }^{2}dxdt, \\ 
J_{2,\lambda ,\alpha }\left( u,m\right) =e^{-2ac^{\lambda
}}qd\dint\limits_{Q_{T}}\left( L_{2}\left( u,m\right) \right) ^{2}\varphi
_{\lambda }^{2}dxdt, \\ 
J_{3,\lambda ,\alpha }\left( u,m\right) =\alpha \left( \left\Vert
u\right\Vert _{H^{k_{n}}\left( Q_{T}\right) }^{2}+\left\Vert m\right\Vert
_{H^{k_{n}}\left( Q_{T}\right) }^{2}\right) , \\ 
J_{\lambda ,\alpha }\left( u,m\right) =J_{1,\lambda ,\alpha }\left(
u,m\right) +J_{2,\lambda ,\alpha }\left( u,m\right) +J_{3,\lambda ,\alpha
}\left( u,m\right) .%
\end{array}%
\right.  \label{6.9}
\end{equation}

\textbf{Step 1. }Evaluate the functional $J_{1,\lambda ,\alpha }\left(
u,m\right) $ in (\ref{6.9}).

By (\ref{2.1}) and (\ref{6.7}) 
\begin{equation*}
\left. 
\begin{array}{c}
L_{1}\left( u_{2},m_{2}\right) =L_{1}\left( u_{1}+h_{1},m_{1}+h_{2}\right) =
\\ 
=\left( u_{1t}+\Delta u_{1}-r\left( \nabla u_{1}\right)
^{2}/2+\dint\limits_{\Omega }K\left( x,y\right) m_{1}\left( y,t\right)
dy\right) + \\ 
+\left( h_{1t}+\Delta h_{1}-r\nabla u_{1}\nabla h_{1}+\dint\limits_{\Omega
}K\left( x,y\right) h_{2}\left( y,t\right) dy\right) -r\left( \nabla
h_{1}\right) ^{2}/2.%
\end{array}%
\right.
\end{equation*}%
Hence,%
\begin{equation}
\left. 
\begin{array}{c}
e^{-2ac^{\lambda }}\left( L_{1}\left( u_{1}+h_{1},m_{1}+h_{2}\right) \right)
^{2}-e^{-3c^{\lambda }}\left( L_{1}\left( u_{1},m_{1}\right) \right) ^{2}=
\\ 
=2e^{-2ac^{\lambda }}L_{1}\left( u_{1},m_{1}\right) \left( h_{1t}+\Delta
h_{1}-r\nabla u_{1}\nabla h_{1}+\dint\limits_{\Omega }K\left( x,y\right)
h_{2}\left( y,t\right) dy\right) - \\ 
-e^{-2ac^{\lambda }}L_{1}\left( u_{1},m_{1}\right) \cdot r\left( \nabla
h_{1}\right) ^{2}+ \\ 
+e^{-2ac^{\lambda }}\left[ \left( h_{1t}+\Delta h_{1}-r\nabla u_{1}\nabla
h_{1}+\dint\limits_{\Omega }K\left( x,y\right) h_{2}\left( y,t\right)
dy\right) -r\left( \nabla h_{1}\right) ^{2}/2\right] ^{2}= \\ 
=A_{1,\text{lin}}\left( h_{1},h_{2}\right) +A_{1,\text{nonlin}}\left(
h_{1},h_{2}\right) ,%
\end{array}%
\right.  \label{6.10}
\end{equation}%
where $A_{1,\text{lin}}\left( h_{1},h_{2}\right) $ and $A_{1,\text{nonlin}%
}\left( h_{1},h_{2}\right) $ are linear and nonlinear operators respectively
with respect to $\left( h_{1},h_{2}\right) ,$%
\begin{equation}
\left. 
\begin{array}{c}
A_{1,\text{lin}}\left( h_{1},h_{2}\right) = \\ 
=2e^{-2ac^{\lambda }}L_{1}\left( u_{1},m_{1}\right) \left( h_{1t}+\Delta
h_{1}-r\nabla u_{1}\nabla h_{1}+\dint\limits_{\Omega }K\left( x,y\right)
h_{2}\left( y,t\right) dy\right) ,%
\end{array}%
\right.  \label{6.11}
\end{equation}%
\begin{equation}
\left. 
\begin{array}{c}
A_{1,\text{nonlin}}\left( h_{1},h_{2}\right) =-e^{-2ac^{\lambda
}}L_{1}\left( u_{1},m_{1}\right) \cdot r\left( \nabla h_{1}\right) ^{2}+ \\ 
+e^{-2ac^{\lambda }}\left[ \left( h_{1t}+\Delta h_{1}-r\nabla u_{1}\nabla
h_{1}+\dint\limits_{\Omega }K\left( x,y\right) h_{2}\left( y,t\right)
dy\right) -r\left( \nabla h_{1}\right) ^{2}/2\right] ^{2}.%
\end{array}%
\right.  \label{6.12}
\end{equation}%
Using the first line of (\ref{6.9}) as well as (\ref{6.10})-(\ref{6.12}), we
obtain 
\begin{equation}
\left. 
\begin{array}{c}
J_{1,\lambda ,\alpha }\left( u_{2},m_{2}\right) -J_{1,\lambda ,\alpha
}\left( u_{1},m_{1}\right) =J_{1,\lambda ,\alpha }\left(
u_{1}+h_{1},m_{1}+h_{2}\right) -J_{1,\lambda ,\alpha }\left(
u_{1},m_{1}\right) = \\ 
=\dint\limits_{Q_{T}}A_{1,\text{lin}}\left( h_{1},h_{2}\right) \varphi
_{\lambda }^{2}dxdt+\dint\limits_{Q_{T}}A_{1,\text{nonlin}}\left(
h_{1},h_{2}\right) \varphi _{\lambda }^{2}dxdt.%
\end{array}%
\right.  \label{6.13}
\end{equation}

\textbf{Step 2. }Evaluate the functional $J_{2,\lambda ,\alpha }\left(
u,m\right) $ in (\ref{6.9}).

By (\ref{2.2}), (\ref{6.7}) and (\ref{6.9})%
\begin{equation*}
\left. 
\begin{array}{c}
L_{2}\left( u_{2},m_{2}\right) =L_{2}\left( u_{1}+h_{1},m_{1}+h_{2}\right)
=L_{2}\left( u_{1},m_{1}\right) + \\ 
+\left[ h_{2t}-\Delta h_{2}-\func{div}\left( rm_{1}\nabla h_{1}\right) -%
\func{div}\left( rm_{1}h_{2}\right) -\func{div}\left( r\nabla
u_{1}h_{2}\right) \right] - \\ 
-\func{div}\left( rh_{2}\nabla h_{1}\right) .%
\end{array}%
\right.
\end{equation*}%
Hence,%
\begin{equation}
\left. 
\begin{array}{c}
e^{-2ac^{\lambda }}qd\left( L_{2}\left( u_{1}+h_{1},m_{1}+h_{2}\right)
\right) ^{2}-e^{-3c^{\lambda }}qd\left( L_{2}\left( u_{1},m_{1}\right)
\right) ^{2}= \\ 
=2e^{-2ac^{\lambda }}qdL_{2}\left( u_{1},m_{1}\right) \times \\ 
\times \left[ h_{2t}-\Delta h_{2}-\func{div}\left( rm_{1}\nabla h_{1}\right)
-\func{div}\left( rm_{1}h_{2}\right) -\func{div}\left( r\nabla
u_{1}h_{2}\right) \right] - \\ 
-2e^{-2ac^{\lambda }}qdL_{2}\left( u_{1},m_{1}\right) \func{div}\left(
rh_{2}\nabla h_{1}\right) + \\ 
+e^{-2ac^{\lambda }}qd\left[ 
\begin{array}{c}
h_{2t}-\Delta h_{2}-\func{div}\left( rm_{1}\nabla h_{1}\right) -\func{div}%
\left( rm_{1}h_{2}\right) - \\ 
-\func{div}\left( r\nabla u_{1}h_{2}\right) -\func{div}\left( rh_{2}\nabla
h_{1}\right)%
\end{array}%
\right] ^{2}= \\ 
=A_{2,\text{lin}}\left( h_{1},h_{2}\right) +A_{2,\text{nonlin}}\left(
h_{1},h_{2}\right) ,%
\end{array}%
\right.  \label{6.14}
\end{equation}%
where similarly with the above $A_{2,\text{lin}}\left( h_{1},h_{2}\right) $
and $A_{2,\text{nonlin}}\left( h_{1},h_{2}\right) $ are linear and nonlinear
operators respectively with respect to $\left( h_{1},h_{2}\right) ,$%
\begin{equation}
\left. 
\begin{array}{c}
A_{2,\text{lin}}\left( h_{1},h_{2}\right) =2e^{-2ac^{\lambda }}qdL_{2}\left(
u_{1},m_{1}\right) \times \\ 
\times \left[ h_{2t}-\Delta h_{2}-\func{div}\left( rm_{1}\nabla h_{1}\right)
-\func{div}\left( rm_{1}h_{2}\right) -\func{div}\left( r\nabla
u_{1}h_{2}\right) \right] ,%
\end{array}%
\right.  \label{6.15}
\end{equation}%
\begin{equation}
\left. 
\begin{array}{c}
A_{2,\text{nonlin}}\left( h_{1},h_{2}\right) =-2e^{-2ac^{\lambda
}}qdL_{2}\left( u_{1},m_{1}\right) \func{div}\left( rh_{2}\nabla
h_{1}\right) + \\ 
+e^{-2ac^{\lambda }}qd\left[ 
\begin{array}{c}
h_{2t}-\Delta h_{2}-\func{div}\left( rm_{1}\nabla h_{1}\right) -\func{div}%
\left( rm_{1}h_{2}\right) - \\ 
-\func{div}\left( r\nabla u_{1}h_{2}\right) -\func{div}\left( rh_{2}\nabla
h_{1}\right)%
\end{array}%
\right] ^{2}.%
\end{array}%
\right.  \label{6.16}
\end{equation}%
Hence,%
\begin{equation}
\left. 
\begin{array}{c}
J_{2,\lambda ,\alpha }\left( u_{2},m_{2}\right) -J_{2,\lambda ,\alpha
}\left( u_{1},m_{1}\right) = \\ 
=J_{2,\lambda ,\alpha }\left( u_{1}+h_{1},m_{1}+h_{2}\right) -J_{2,\lambda
,\alpha }\left( u_{1},m_{1}\right) = \\ 
=e^{-2ac^{\lambda }}qd\dint\limits_{Q_{T}}A_{2,\text{lin}}\left(
h_{1},h_{2}\right) \varphi _{\lambda }^{2}dxdt+e^{-2ac^{\lambda
}}qd\dint\limits_{Q_{T}}A_{2,\text{nonlin}}\left( h_{1},h_{2}\right) \varphi
_{\lambda }^{2}dxdt.%
\end{array}%
\right.  \label{6.17}
\end{equation}

\textbf{Step 3. }Evaluate the functional $J_{3,\lambda ,\alpha }\left(
u,m\right) $ in (\ref{6.9}).

We have: 
\begin{equation}
\left. 
\begin{array}{c}
J_{3,\lambda ,\alpha }\left( u_{2},m_{2}\right) -J_{3,\lambda ,\alpha
}\left( u_{1},m_{1}\right) = \\ 
=2\alpha \left[ \left( u_{1},m_{1}\right) ,\left( h_{1},h_{2}\right) \right]
+\alpha \left[ \left\Vert h_{1}\right\Vert _{H^{k_{n}}\left( Q_{T}\right)
}^{2}+\left\Vert h_{2}\right\Vert _{H^{k_{n}}\left( Q_{T}\right) }^{2}\right]
= \\ 
=A_{3,\text{lin}}\left( h_{1},h_{2}\right) +A_{3,\text{nonlin}}\left(
h_{1},h_{2}\right) , \\ 
A_{3,\text{lin}}\left( h_{1},h_{2}\right) =2\alpha \left[ \left(
u_{1},m_{1}\right) ,\left( h_{1},h_{2}\right) \right] , \\ 
A_{3,\text{nonlin}}\left( h_{1},h_{2}\right) =\alpha \left[ \left\Vert
h_{1}\right\Vert _{H^{k_{n}}\left( Q_{T}\right) }^{2}+\left\Vert
h_{2}\right\Vert _{H^{k_{n}}\left( Q_{T}\right) }^{2}\right] .%
\end{array}%
\right.  \label{6.18}
\end{equation}

Thus, it follows from (\ref{6.9})-(\ref{6.18}) that%
\begin{equation}
\left. 
\begin{array}{c}
J_{\lambda ,\alpha }\left( u_{1}+h_{1},m_{1}+h_{2}\right) -J_{\lambda
,\alpha }\left( u_{1},m_{1}\right) = \\ 
=\dsum\limits_{i=1}^{3}A_{i,\text{lin}}\left( h_{1},h_{2}\right)
+\dsum\limits_{i=1}^{3}A_{i,\text{nonlin}}\left( h_{1},h_{2}\right) .%
\end{array}%
\right.  \label{6.19}
\end{equation}%
Consider the functional $A_{\text{lin}}\left( h_{1},h_{2}\right) ,$%
\begin{equation}
A_{\text{lin}}\left( h_{1},h_{2}\right) =\dsum\limits_{i=1}^{3}A_{i,\text{lin%
}}\left( h_{1},h_{2}\right) .  \label{6.20}
\end{equation}%
Since by (\ref{6.8}) the pair $\left( h_{1},h_{2}\right) \in
H_{2,0,0}^{k_{n}}\left( Q_{T}\right) ,$ then by (\ref{6.11}), (\ref{6.15})
and (\ref{6.18}) $A_{\text{lin}}\left( h_{1},h_{2}\right) $ is a linear
bounded functional such that 
\begin{equation}
A_{\text{lin}}\left( h_{1},h_{2}\right) :H_{2,0,0}^{k_{n}}\left(
Q_{T}\right) \rightarrow \mathbb{R}.  \label{6.21}
\end{equation}%
Next, it follows from Riesz theorem, the last line of (\ref{5.30}) and (\ref%
{6.21}) that there exists unique point $\overline{A}_{\text{lin}}\in
H_{2,0,0}^{k_{n}}\left( Q_{T}\right) $ such that 
\begin{equation}
A_{\text{lin}}\left( h_{1},h_{2}\right) =\left[ \overline{A}_{\text{lin}%
},\left( h_{1},h_{2}\right) \right] ,\text{ }\forall \left(
h_{1},h_{2}\right) \in H_{2,0,0}^{k_{n}}\left( Q_{T}\right) .  \label{6.22}
\end{equation}%
Hence, using (\ref{5.5}), (\ref{6.12}), (\ref{6.16}), the third line of (\ref%
{6.18}), (\ref{6.19}) and (\ref{6.22}), we obtain%
\begin{equation*}
\lim_{\left\Vert \left( h_{1},h_{2}\right) \right\Vert _{H_{2}^{k_{n}}\left(
Q_{T}\right) }\rightarrow 0}\frac{J_{\lambda ,\alpha }\left(
u_{1}+h_{1},m_{1}+h_{2}\right) -J_{\lambda ,\alpha }\left(
u_{1},m_{1}\right) -\left[ \overline{A}_{\text{lin}},\left(
h_{1},h_{2}\right) \right] }{\left\Vert \left( h_{1},h_{2}\right)
\right\Vert _{H_{2}^{k_{n}}\left( Q_{T}\right) }}=0.
\end{equation*}%
Hence, $\overline{A}_{\text{lin}}$ is the Fr\'{e}chet derivative of the
functional $J_{\lambda ,\alpha }\left( u,m\right) $ at the point $\left(
u_{1},m_{1}\right) ,$ i.e.%
\begin{equation}
J_{\lambda ,\alpha }^{\prime }\left( u_{1},m_{1}\right) \left(
h_{1},h_{2}\right) =\left[ \overline{A}_{\text{lin}},\left(
h_{1},h_{2}\right) \right] ,\text{ }\forall \left( h_{1},h_{2}\right) \in
H_{2,0,0}^{k_{n}}\left( Q_{T}\right) .  \label{6.220}
\end{equation}%
We omit the proof of estimate (\ref{6.3}) since it is proven similarly with
the proof of Theorem 3.1 of \cite{Bak}.

\textbf{Step 4.} We now prove the strong convexity estimate (\ref{6.5}).

By (\ref{5.3}), (\ref{6.8}), (\ref{6.12}), (\ref{6.16}), (\ref{6.18}) and (%
\ref{6.19}) 
\begin{equation}
\left. 
\begin{array}{c}
\dsum\limits_{i=1}^{2}A_{i,\text{nonlin}}\left( h_{1},h_{2}\right) \geq \\ 
\geq C_{3}e^{-2ac^{\lambda }}\left( h_{1t}+\Delta h_{1}\right)
^{2}+C_{3}e^{-2ac^{\lambda }}q\left( h_{2t}-\Delta h_{2}-rm_{1}\Delta
h_{1}\right) ^{2}- \\ 
-C_{3}e^{-2ac^{\lambda }}\left( \left( \nabla h_{1}\right)
^{2}+h_{1}^{2}+q\left( \nabla h_{2}\right)
^{2}+qh_{2}^{2}+\dint\limits_{\Omega }h_{2}^{2}\left( y,t\right) dy\right) .%
\end{array}%
\right.  \label{6.23}
\end{equation}%
Let $\lambda _{0,1}\geq 1$ be the number of \cref{Theorem 3.1}. Due to the term $%
rm_{1}$ in the second line of (\ref{6.23}), we now take in \cref{Theorem 3.2} $%
\lambda _{0,2}=\lambda _{0,2}\left( \left\Vert r\right\Vert _{C^{1}\left( 
\overline{Q}_{T}\right) },R,c,Q_{T}\right) \geq 1.$ Next, we denote 
\begin{equation*}
\lambda _{0}=\lambda _{0}\left( \left\Vert r\right\Vert _{C^{1}\left( 
\overline{Q}_{T}\right) },R,c,Q_{T}\right) =\max \left( \lambda
_{0,1},\lambda _{0,2}\right)
\end{equation*}%
Multiply inequality (\ref{6.23})\ by the function $\varphi _{\lambda
}^{2}\left( t\right) $ defined in (\ref{4.1}) and integrate it then over $%
Q_{T}.$ Then apply Carleman estimate (\ref{4.2}) of \cref{Theorem 3.1} to the term $%
\left( h_{1t}+\Delta h_{1}\right) ^{2}$ and the quasi-Carleman estimate (\ref%
{4.3}) of \cref{Theorem 3.2} to the term $q\left( h_{2t}-\Delta h_{2}-rm_{1}\Delta
h_{1}\right) ^{2}.$ Taking into account (\ref{4.01}), (\ref{4.02}), (\ref%
{5.51}) and the last line of (\ref{6.8}), we obtain 
\begin{equation*}
\dint\limits_{Q_{T}}\dsum\limits_{i=1}^{2}A_{i,\text{nonlin}}\left(
h_{1},h_{2}\right) \varphi _{\lambda }^{2}dxdt\geq
\end{equation*}%
\begin{equation*}
\geq C_{3}e^{-2ac^{\lambda }}\sqrt{\lambda }\dint\limits_{Q_{T}}\left(
\nabla h_{1}\right) ^{2}\varphi _{\lambda }^{2}dxdt+C_{3}e^{-2ac^{\lambda
}}\lambda ^{2}c^{\lambda }\dint\limits_{Q_{T}}h_{1}^{2}\varphi _{\lambda
}^{2}dxdt+
\end{equation*}%
\begin{equation}
+C_{3}e^{-2ac^{\lambda }}\left( \frac{c}{T+c}\right) ^{\lambda
}\dint\limits_{Q_{T}}\left( \nabla h_{2}\right) ^{2}\varphi _{\lambda
}^{2}dxdt+C_{3}e^{-2ac^{\lambda }}\cdot 2^{\lambda
}\dint\limits_{Q_{T}}h_{2}^{2}\varphi _{\lambda }^{2}dxdt-  \label{6.24}
\end{equation}%
\begin{equation*}
-C_{3}e^{-2ac^{\lambda }}\dint\limits_{Q_{T}}\left( \left( \nabla
h_{1}\right) ^{2}+h_{1}^{2}+q\left( \nabla h_{2}\right)
^{2}+qh_{2}^{2}+\dint\limits_{\Omega }h_{2}^{2}\left( y,t\right) dy\right)
\varphi _{\lambda }^{2}dxdt-
\end{equation*}%
\begin{equation*}
-C_{3}e^{-2\left( a-1\right) c^{\lambda }}\dint\limits_{\Omega }\left[
\left( \nabla h_{1}\right) ^{2}+h_{1}^{2}\right] \left( x,T\right) dx,\text{ 
}\forall \lambda \geq \lambda _{0}.
\end{equation*}%
By (\ref{4.02}) and (\ref{5.51}) there exists a sufficiently large number $%
\lambda _{1,1}=\lambda _{1,1}\left( c,T\right) \geq \lambda _{0}$ depending
only on $c$ and $T$ such that 
\begin{equation}
\left( \frac{c}{T+c}\right) ^{\lambda }-q=\frac{1}{\left( T+c\right)
^{\lambda }}\left( c^{\lambda }-\frac{1}{\lambda }\right) \geq \frac{%
c^{\lambda }}{2\left( T+c\right) ^{\lambda }},\text{ }\forall \lambda \geq
\lambda _{1,1}.  \label{6.26}
\end{equation}%
Then (\ref{6.24}) and (\ref{6.26}) imply: 
\begin{equation*}
C_{3}e^{-2ac^{\lambda }}\left( \frac{c}{T+c}\right) ^{\lambda
}\dint\limits_{Q_{T}}\left( \nabla h_{2}\right) ^{2}\varphi _{\lambda
}^{2}dxdt-C_{3}e^{-2ac^{\lambda }}q\dint\limits_{Q_{T}}\left( \nabla
h_{2}\right) ^{2}\varphi _{\lambda }^{2}dxdt\geq
\end{equation*}%
\begin{equation}
\geq C_{3}e^{-2ac^{\lambda }}\left( \frac{c}{T+c}\right) ^{\lambda
}\dint\limits_{Q_{T}}\left( \nabla h_{2}\right) ^{2}\varphi _{\lambda
}^{2}dxdt,\text{ }\forall \lambda \geq \lambda _{1,1}.  \label{6.27}
\end{equation}%
Hence, using (\ref{6.24}) and (\ref{6.27}), we obtain that there exists a
sufficiently large number $\lambda _{1}=\lambda _{1}\left(
M,R,c,a,d,Q_{T}\right) \geq \max \left( \lambda _{1,1}\left( c,T\right)
,1\right) $ depending only on listed parameters such that 
\begin{equation*}
\dint\limits_{Q_{T}}\dsum\limits_{i=1}^{3}A_{i,\text{nonlin}}\left(
h_{1},h_{2}\right) \varphi _{\lambda }^{2}dxdt\geq
\end{equation*}%
\begin{equation*}
\geq C_{3}e^{-2ac^{\lambda }}\sqrt{\lambda }\dint\limits_{Q_{T}}\left(
\nabla h_{1}\right) ^{2}\varphi _{\lambda }^{2}dxdt+C_{3}e^{-2ac^{\lambda
}}\lambda ^{2}c^{\lambda }\dint\limits_{Q_{T}}h_{1}^{2}\varphi _{\lambda
}^{2}dxdt+
\end{equation*}%
\begin{equation}
+C_{3}e^{-2ac^{\lambda }}\left( \frac{c}{T+c}\right) ^{\lambda
}\dint\limits_{Q_{T}}\left( \nabla h_{2}\right) ^{2}\varphi _{\lambda
}^{2}dxdt+C_{3}e^{-2ac^{\lambda }}\cdot 2^{\lambda
}\dint\limits_{Q_{T}}h_{2}^{2}\varphi _{\lambda }^{2}dxdt-  \label{6.28}
\end{equation}%
\begin{equation*}
-C_{3}e^{-2\left( a-1\right) c^{\lambda }}\dint\limits_{\Omega }\left[
\left( \nabla h_{1}\right) ^{2}+h_{1}^{2}\right] \left( x,T\right) dx
+\alpha \left[ \left\Vert h_{1}\right\Vert _{H^{k_{n}}\left( Q_{T}\right)
}^{2}+\left\Vert h_{2}\right\Vert _{H^{k_{n}}\left( Q_{T}\right) }^{2}\right]
,
\end{equation*}%
for all $\lambda \geq \lambda _{1}$. By (\ref{6.4}), $\alpha /2>e^{-2\left( a-1\right) c^{\lambda }},$ $\forall
\lambda \geq \lambda _{1}.$ Hence, it follows from (\ref{6.7}), (\ref{6.19}%
)-(\ref{6.220}) and (\ref{6.28}) that%
\begin{equation*}
J_{\lambda ,\alpha }\left( u_{1}+h_{1},m_{1}+h_{2}\right) -J_{\lambda
,\alpha }\left( u_{1},m_{1}\right) -J_{\lambda ,\alpha }^{\prime }\left(
u_{1},m_{1}\right) \left( h_{1},h_{2}\right) \geq
\end{equation*}%
\begin{equation*}
\geq C_{3}e^{-2ac^{\lambda }}\left( \frac{c}{T+c}\right) ^{\lambda }\exp %
\left[ 2\left( T\left( 1-\gamma \right) +c\right) ^{\lambda }\right] \left(
\left\Vert h_{1}\right\Vert _{H^{1,0}\left( Q_{\gamma T}\right)
}^{2}+\left\Vert h_{2}\right\Vert _{H^{1,0}\left( Q_{\gamma T}\right)
}^{2}\right) +
\end{equation*}%
\begin{equation*}
+\frac{\alpha }{2}\left[ \left\Vert h_{1}\right\Vert _{H^{k_{n}}\left(
Q_{T}\right) }^{2}+\left\Vert h_{2}\right\Vert _{H^{k_{n}}\left(
Q_{T}\right) }^{2}\right] ,\text{ }
\end{equation*}%
\begin{equation*}
\forall \lambda \geq \lambda _{1},\text{ }\forall \left( u_{1},m_{1}\right)
,\left( u_{2},m_{2}\right) =\left( u_{1}+h_{1},m_{1}+h_{2}\right) \in 
\overline{B\left( R\right) },
\end{equation*}%
which proves the strong convexity inequality (\ref{6.5}). As soon as (\ref%
{6.5}) is established, the existence and uniqueness of the minimizer $\left(
u_{\min ,\lambda },m_{\min ,\lambda }\right) \in \overline{B\left( R\right) }
${\ }of the functional $J_{\lambda ,\alpha }\left( u,m\right) $\ on the
set $\overline{B\left( R\right) }$ as well as inequality (\ref{6.6}) follow
immediately from a combination of Lemma 2.1 with Theorem 2.1 of \cite{Bak}.
\end{proof}

\section{The Accuracy of the Minimizer}

\label{sec:6}

In applications, input data are always given with errors. Therefore, it is
important to figure out how the error in the initial conditions (\ref{2.4})
affects the minimizer $\left( u_{\min ,\lambda },m_{\min ,\lambda }\right) $
of the functional $J_{\lambda ,\alpha }\left( u,m\right) .$ Recall that the
existence and uniqueness of this minimizer on the set $\overline{B\left(
R\right) }$ were established in \cref{Theorem 5.1}. In this section we
estimate the accuracy of the minimizer of a functional, which is closely
related to $J_{\lambda ,\alpha }\left( u,m\right) .$ The reason of the
replacement of $J_{\lambda ,\alpha }\left( u,m\right) $ with that new
functional is that it follows from the second line of (\ref{5.4}) and (\ref%
{6.6}) that we can compare only pairs of functions $\left( u,m\right) ,$
which have the same initial conditions $\left( u\left( x,0\right) ,m\left(
x,0\right) \right) .$

To get the desired estimate, we first follow one of the main concepts of the
theory of Ill-Posed Problems \cite{T} via assuming that there exists an
ideal solution $\left( u^{\ast },m^{\ast }\right) $ of Forecasting Problem
of section 2 with the errorless initial conditions $\left( u_{0}^{\ast
},m_{0}^{\ast }\right) \in H_{2,0}^{k_{n}}\left( \Omega \right) ,$ see (\ref%
{5.30}) and (\ref{5.31}). Hence,%
\begin{equation}
L_{1}\left( u^{\ast },m^{\ast }\right) =L_{2}\left( u^{\ast },m^{\ast
}\right) =0.  \label{7.0}
\end{equation}%
Let a sufficiently small number $\delta \in \left( 0,1\right) $
characterizes the level of the error in the initial conditions (\ref{2.4}),%
\begin{equation}
\left\Vert u_{0}-u_{0}^{\ast }\right\Vert _{H^{2}\left( \Omega \right)
}<\delta ,\text{ }\left\Vert m_{0}-m_{0}^{\ast }\right\Vert _{H^{2}\left(
\Omega \right) }<\delta .  \label{7.1}
\end{equation}%
Since we seek solution $\left( u,m\right) \in \overline{B\left( R\right) }$
of our problem (see (\ref{5.31}) and (\ref{5.4})), then we assume that 
\begin{equation}
\left( u^{\ast },m^{\ast }\right) \in B^{\ast }\left( R\right) =\left\{ 
\begin{array}{c}
\left( u,m\right) \in H_{2,0}^{k_{n}}\left( Q_{T}\right) :\left\Vert \left(
u,m\right) \right\Vert _{H_{2}^{k_{n}}\left( Q_{T}\right) }<R, \\ 
u\left( x,0\right) =u_{0}^{\ast }\left( x\right) ,\text{ }m\left( x,0\right)
=m_{0}^{\ast }\left( x\right)%
\end{array}%
\right\} .  \label{7.2}
\end{equation}

Introduce functions 
\begin{equation}
\left. 
\begin{array}{c}
\widehat{u}_{0}\left( x,t\right) =u_{0}\left( x\right) ,\text{ }\widehat{m}%
_{0}\left( x,t\right) =m_{0}\left( x\right) , \\ 
\widehat{u}_{0}^{\ast }\left( x,t\right) =u_{0}^{\ast }\left( x\right) ,%
\widehat{m}_{0}^{\ast }\left( x,t\right) =m_{0}^{\ast }\left( x\right) .%
\end{array}%
\right.  \label{7.3}
\end{equation}%
By (\ref{7.3})%
\begin{equation}
\left. 
\begin{array}{c}
\widehat{u}_{0}\left( x,0\right) =u_{0}\left( x\right) ,\text{ }\widehat{m}%
_{0}\left( x,0\right) =m_{0}\left( x\right) , \\ 
\widehat{u}_{0}^{\ast }\left( x,0\right) =u_{0}^{\ast }\left( x\right) ,%
\text{ }\widehat{m}_{0}^{\ast }\left( x,0\right) =m_{0}^{\ast }\left(
x\right) .%
\end{array}%
\right.  \label{7.4}
\end{equation}%
It follows from (\ref{5.4}) and (\ref{7.1})-(\ref{7.4}) that it is natural
to assume that 
\begin{equation}
\left. 
\begin{array}{c}
\left( \widehat{u}_{0},\widehat{m}_{0}\right) \left( x,t\right) \in B\left(
R\right) ,\text{ } \\ 
\left( \widehat{u}_{0}^{\ast },\widehat{m}_{0}^{\ast }\right) \left(
x,t\right) \in B^{\ast }\left( R\right) .%
\end{array}%
\right.  \label{7.5}
\end{equation}

For each vector function $\left( u,m\right) \in B\left( R\right) $ we define
another vector function $\left( \overline{u},\overline{m}\right) $, 
\begin{equation}
\left( \overline{u},\overline{m}\right) \left( x,t\right) =\left( u,m\right)
\left( x,t\right) -\left( \widehat{u}_{0},\widehat{m}_{0}\right) \left(
x,t\right) .  \label{7.6}
\end{equation}%
Using (\ref{5.31}), (\ref{5.4}), (\ref{6.8}), (\ref{7.5}), (\ref{7.6}) and
triangle inequality, we obtain 
\begin{equation}
\left( \overline{u},\overline{m}\right) \in B_{0}\left( 2R\right) ,\text{ }%
\forall \left( u,m\right) \in B\left( R\right) .  \label{7.7}
\end{equation}%
In addition, we denote%
\begin{equation}
\left( \overline{u}^{\ast },\overline{m}^{\ast }\right) \left( x,t\right)
=\left( u^{\ast }-\widehat{u}_{0}^{\ast },m^{\ast }-\widehat{m}_{0}^{\ast
}\right) \left( x,t\right) .  \label{7.9}
\end{equation}%
Then, using (\ref{6.8}), (\ref{7.2})-(\ref{7.9}) and triangle inequality, we
obtain%
\begin{equation}
\left( \overline{u}^{\ast },\overline{m}^{\ast }\right) \in B_{0}\left(
2R\right) .  \label{7.10}
\end{equation}%
Next, by the first line of (\ref{7.5}) and triangle inequality%
\begin{equation}
\left( v+\widehat{u}_{0},s+\widehat{m}_{0}\right) \in B\left( 3R\right) ,%
\text{ }\forall \left( v,s\right) \in B_{0}\left( 2R\right) .  \label{7.11}
\end{equation}

Introduce a new functional $I_{\lambda ,\alpha }\left( v,s\right) ,$ 
\begin{equation}
\left. 
\begin{array}{c}
I_{\lambda ,\alpha }:B_{0}\left( 2R\right) \rightarrow \mathbb{R}, \\ 
I_{\lambda ,\alpha }\left( v,s\right) =J_{\lambda ,\alpha }\left( v+\widehat{%
u}_{0},s+\widehat{m}_{0}\right) ,\text{ }\forall \left( v,s\right) \in
B_{0}\left( 2R\right) .%
\end{array}%
\right.  \label{7.12}
\end{equation}%
Let $\lambda _{1}=\lambda _{1}\left( M,R,c,a,d,Q_{T}\right) \geq 1$ be the
value of the parameter $\lambda _{1}$ chosen in \cref{Theorem 5.1}. Define
the number $\lambda _{2}$ as 
\begin{equation}
\lambda _{2}=\lambda _{2}\left( M,3R,c,a,d,Q_{T}\right) =\lambda _{1}\left(
M,3R,c,a,d,Q_{T}\right) .  \label{7.13}
\end{equation}%
It follows from (\ref{7.11})-(\ref{7.13}) that the full analog of %
\cref{Theorem 5.1} is valid for the functional $I_{\lambda ,\alpha }\left(
v,s\right) $ for all $\lambda \geq \lambda _{2}.$ Let (\ref{2.00}) be valid.
Choose a sufficiently small number $\delta _{0}=\delta _{0}\left(
M,3R,c,d,\gamma ,Q_{T}\right) \in \left( 0,1\right) $ depending only on
listed parameters such as 
\begin{equation}
\frac{1}{2\ln \left( T+c\right) }\ln \left[ \left( \ln \left( \delta
_{0}^{-1/3}\right) \right) \right] \geq \lambda _{2}.  \label{7.14}
\end{equation}%
Define $\lambda =\lambda \left( \delta \right) $ as%
\begin{equation}
\lambda \left( \delta \right) =\frac{1}{2\ln \left( T+c\right) }\ln \left[
\left( \ln \left( \delta ^{-1/3}\right) \right) \right] ,\text{ }\forall
\delta \in \left( 0,\delta _{0}\right) .  \label{7.15}
\end{equation}

\begin{theorem} \label{Theorem 6.1}{Let conditions (\ref{2.00}), (\ref{5.31}), (\ref%
{7.1}), (\ref{7.2}), (\ref{7.5}), (\ref{7.14}) and (\ref{7.15}) hold.
Following (\ref{6.4}), set }%
\begin{equation}
\alpha =\alpha \left( \delta \right) =2e^{-\left( a-1\right) c^{\lambda
\left( \delta \right) }}.  \label{7.16}
\end{equation}%
{Let }$\left( v_{\min ,\lambda \left( \delta \right) },s_{\min ,\lambda
\left( \delta \right) }\right) \in \overline{B_{0}\left( 2R\right) }${\
be the minimizer of the functional }$I_{\lambda \left( \delta \right)
,\alpha }\left( v,s\right) ,${\ the existence and uniqueness of which
on the set }$\overline{B_{0}\left( 2R\right) }${\ are guaranteed by the
above mentioned analog of \cref{Theorem 5.1}. Let }$\left( \overline{u}^{\ast },%
\overline{m}^{\ast }\right) \in B_{0}\left( 2R\right) ${\ be the pair
of functions defined in (\ref{7.9}), also, see (\ref{7.10}). Then the
following error estimates are valid:}%
\begin{equation}
\left\Vert v_{\min ,\lambda \left( \delta \right) }-\overline{u}^{\ast
}\right\Vert _{H^{1,0}\left( Q_{\gamma T}\right) }+\left\Vert s_{\min
,\lambda \left( \delta \right) }-\overline{m}^{\ast }\right\Vert
_{H^{1,0}\left( Q_{\gamma T}\right) }\leq C_{3}\sqrt{\delta }.  \label{7.17}
\end{equation}

{Let }%
\begin{equation}
\widetilde{v}_{\min ,\lambda \left( \delta \right) }=v_{\min ,\lambda \left(
\delta \right) }+\widehat{u}_{0},\widetilde{s}_{\min ,\lambda \left( \delta
\right) }=s_{\min ,\lambda \left( \delta \right) }+\widehat{m}_{0},
\label{7.18}
\end{equation}%
{see (\ref{7.3}). Then} 
\begin{equation}
\left\Vert \widetilde{v}_{\min ,\lambda \left( \delta \right) }-u^{\ast
}\right\Vert _{H^{1,0}\left( Q_{\gamma T}\right) }\leq C_{3}\sqrt{\delta },%
\text{ }\left\Vert \widetilde{s}_{\min ,\lambda \left( \delta \right)
}-m^{\ast }\right\Vert _{H^{1,0}\left( Q_{\gamma T}\right) }\leq C_{3}\sqrt{%
\delta }.  \label{7.19}
\end{equation}
\end{theorem}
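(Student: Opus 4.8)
The plan is to exploit the strong convexity estimate \eqref{6.5} for the functional $I_{\lambda,\alpha}$, together with the smallness of $I_{\lambda,\alpha}$ at the ideal-solution representative $\left(\overline{u}^{\ast},\overline{m}^{\ast}\right)$, to squeeze out the $\sqrt{\delta}$ error bound. First I would rewrite everything in terms of the shifted functional $I_{\lambda,\alpha}$ so that the competitors live in $\overline{B_0(2R)}$ and share the \emph{zero} initial conditions, which is exactly what \eqref{7.8} provides after subtracting $\left(\widehat{u}_0,\widehat{m}_0\right)$; this is the whole point of the replacement explained at the start of \cref{sec:6}. By the analog of \cref{Theorem 5.1} for $I_{\lambda,\alpha}$, this functional is strongly convex on $\overline{B_0(2R)}$ for $\lambda\ge\lambda_2$, and it admits the unique minimizer $\left(v_{\min},s_{\min}\right)$ with the variational inequality $\left[I'_{\lambda,\alpha}\left(v_{\min},s_{\min}\right),\left(v_{\min}-v,s_{\min}-s\right)\right]\le 0$ for all $(v,s)\in\overline{B_0(2R)}$, the analog of \eqref{6.6}.

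Next I would apply the strong convexity inequality \eqref{6.5} at the two points $\left(v_{\min},s_{\min}\right)$ and $\left(\overline{u}^{\ast},\overline{m}^{\ast}\right)$. Taking $\left(u_1,m_1\right)=\left(v_{\min}+\widehat{u}_0,s_{\min}+\widehat{m}_0\right)$ and $\left(u_2,m_2\right)=\left(\overline{u}^{\ast}+\widehat{u}_0,\overline{m}^{\ast}+\widehat{m}_0\right)$, and using the variational inequality to drop the linear term (the sign works out since $\left(\overline{u}^{\ast},\overline{m}^{\ast}\right)$ is a competitor), I obtain a lower bound of the form
\begin{equation*}
C_3 e^{-2ac^{\lambda}}\left(c/(T+c)\right)^{\lambda}\exp\left[2\left(T(1-\gamma)+c\right)^{\lambda}\right]\left(\left\Vert v_{\min}-\overline{u}^{\ast}\right\Vert_{H^{1,0}(Q_{\gamma T})}^2+\left\Vert s_{\min}-\overline{m}^{\ast}\right\Vert_{H^{1,0}(Q_{\gamma T})}^2\right)\le I_{\lambda,\alpha}\left(\overline{u}^{\ast},\overline{m}^{\ast}\right)-I_{\lambda,\alpha}\left(v_{\min},s_{\min}\right).
\end{equation*}
The right-hand side is bounded above by $I_{\lambda,\alpha}\left(\overline{u}^{\ast},\overline{m}^{\ast}\right)=J_{\lambda,\alpha}\left(u^{\ast},m^{\ast}\right)$ since the minimizer value is nonnegative. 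Now I would estimate $J_{\lambda,\alpha}\left(u^{\ast},m^{\ast}\right)$ from above: because $\left(u^{\ast},m^{\ast}\right)$ solves the MFGS exactly, $L_1\left(u^{\ast},m^{\ast}\right)=L_2\left(u^{\ast},m^{\ast}\right)=0$ by \eqref{7.0}, so the integral part of $J_{\lambda,\alpha}$ vanishes and only the regularization term $\alpha\left(\left\Vert u^{\ast}\right\Vert_{H^{k_n}}^2+\left\Vert m^{\ast}\right\Vert_{H^{k_n}}^2\right)\le\alpha R^2$ survives. Here the careful choice \eqref{7.16} of $\alpha=2e^{-(a-1)c^{\lambda(\delta)}}$ enters.

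The final step is the calibration of $\lambda$ against $\delta$. Combining the two bounds gives
\begin{equation*}
\left\Vert v_{\min}-\overline{u}^{\ast}\right\Vert_{H^{1,0}(Q_{\gamma T})}^2+\left\Vert s_{\min}-\overline{m}^{\ast}\right\Vert_{H^{1,0}(Q_{\gamma T})}^2\le C_3\,\alpha\, e^{2ac^{\lambda}}\left((T+c)/c\right)^{\lambda}\exp\left[-2\left(T(1-\gamma)+c\right)^{\lambda}\right],
\end{equation*}
and substituting $\alpha=2e^{-(a-1)c^{\lambda}}$ collapses the dangerous $e^{2ac^{\lambda}}$ factor down to $e^{(a+1)c^{\lambda}}$, which is dominated by the decaying exponential $\exp\left[-2\left(T(1-\gamma)+c\right)^{\lambda}\right]$. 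With the specific choice \eqref{7.15}, namely $\lambda(\delta)=\frac{1}{2\ln(T+c)}\ln\left[\ln\left(\delta^{-1/3}\right)\right]$, one checks by direct computation that $\exp\left[-2\left(T(1-\gamma)+c\right)^{\lambda}\right]$ is bounded by a power $\delta^{\theta}$ with $\theta>1/2$, so after discarding harmless polynomial-in-$\lambda$ prefactors the whole right-hand side is $\le C_3\delta$, yielding \eqref{7.17} upon taking square roots. Finally \eqref{7.19} follows from \eqref{7.17} by the triangle inequality applied to the definitions \eqref{7.18}, \eqref{7.9}, using that $\left\Vert\widehat{u}_0-\widehat{u}_0^{\ast}\right\Vert_{H^{1,0}(Q_{\gamma T})}\le C_3\delta$ by \eqref{7.3} and the error bound \eqref{7.1} on the initial data. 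I expect the main obstacle to be the precise bookkeeping in this last calibration: one must verify that the competing exponentials $\left(c/(T+c)\right)^{\lambda}$, $e^{(a+1)c^{\lambda}}$, and $\exp\left[-2\left(T(1-\gamma)+c\right)^{\lambda}\right]$ combine with the chosen $\lambda(\delta)$ to give \emph{exactly} a $\sqrt{\delta}$ rate, and in particular that the double-logarithmic choice of $\lambda$ is both large enough (so $\lambda\ge\lambda_2$, guaranteed by \eqref{7.14}) and tuned so the leading term $\left(T(1-\gamma)+c\right)^{\lambda}$ in the exponent reproduces the target power of $\delta$.
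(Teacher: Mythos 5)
Your overall route coincides with the paper's: pass to the shifted functional $I_{\lambda ,\alpha }$ of (\ref{7.12}), apply the strong convexity estimate (the analog of (\ref{6.5})) with the minimizer as base point, remove the linear term via the variational inequality (\ref{6.6}), bound $I_{\lambda ,\alpha }\left( \overline{u}^{\ast },\overline{m}^{\ast }\right) $ from above, and calibrate $\lambda \left( \delta \right) $ via (\ref{7.15}). However, your third step contains a genuine error, and it occurs exactly where the data error is supposed to enter. You assert that $I_{\lambda ,\alpha }\left( \overline{u}^{\ast },\overline{m}^{\ast }\right) =J_{\lambda ,\alpha }\left( u^{\ast },m^{\ast }\right) $, so that by (\ref{7.0}) the weighted integral vanishes and only the regularization term survives. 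This identity is false: by (\ref{7.12}) and (\ref{7.9}),
\begin{equation*}
I_{\lambda ,\alpha }\left( \overline{u}^{\ast },\overline{m}^{\ast }\right)
=J_{\lambda ,\alpha }\left( \overline{u}^{\ast }+\widehat{u}_{0},\overline{m}
^{\ast }+\widehat{m}_{0}\right) =J_{\lambda ,\alpha }\left( u^{\ast }+\left(
\widehat{u}_{0}-\widehat{u}_{0}^{\ast }\right) ,m^{\ast }+\left( \widehat{m}
_{0}-\widehat{m}_{0}^{\ast }\right) \right) ,
\end{equation*}
and $\widehat{u}_{0}\neq \widehat{u}_{0}^{\ast }$ because the shift $\widehat{u}_{0}$ is built from the noisy data $u_{0}$, whereas $\overline{u}^{\ast }$ subtracts the shift built from the exact data $u_{0}^{\ast }$; this is precisely the paper's (\ref{7.23}). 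Consequently $L_{i}$ does not vanish at this point: it is only of size $\delta $ in $L_{2}\left( Q_{T}\right) $, see (\ref{7.230}), and after multiplication by the Carleman weight the integral contributes the term $C_{3}\exp \left( 3\left( T+c\right) ^{\lambda }\right) \delta ^{2}$ of (\ref{7.24}). That term is not bookkeeping; it is the only channel through which the data error enters the bound, it is what the double-logarithmic choice (\ref{7.15}) of $\lambda \left( \delta \right) $ is tuned against (so that (\ref{7.25}) converts it into $\delta $), and it produces the dominant contribution to (\ref{7.17}). In your version the right-hand side contains no power of $\delta $ at all, only $\alpha $ and exponentials in $\lambda $, so the choice (\ref{7.15}) has nothing to balance and your argument cannot produce (\ref{7.17}). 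Note also the internal inconsistency: when deriving (\ref{7.19}) you correctly use $\left\Vert \widehat{u}_{0}-\widehat{u}_{0}^{\ast }\right\Vert =O\left( \delta \right) $ from (\ref{7.1}) and (\ref{7.3}) (that part matches the paper), yet your step for (\ref{7.17}) implicitly assumed $\widehat{u}_{0}=\widehat{u}_{0}^{\ast }$.

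A secondary caution concerns the point you yourself flagged as the main obstacle. Your claim that $\exp \left[ -2\left( T\left( 1-\gamma \right) +c\right) ^{\lambda \left( \delta \right) }\right] $ is bounded by $\delta ^{\theta }$ with $\theta >1/2$ requires real scrutiny: with (\ref{7.15}) one has $\left( T\left( 1-\gamma \right) +c\right) ^{\lambda \left( \delta \right) }=\left[ \ln \left( \delta ^{-1/3}\right) \right] ^{\beta }$, where $\beta =\ln \left( T\left( 1-\gamma \right) +c\right) /\left( 2\ln \left( T+c\right) \right) <1/2$, which grows much more slowly than $\ln \left( 1/\delta \right) $, so a power-of-$\delta $ bound does not follow by ``direct computation.'' The paper treats this factor in its (\ref{7.26}). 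In any case this is secondary: no calibration can close your argument until the missing $O\left( \delta \right) $ perturbation term described above is restored.
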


Thus, estimates (\ref{7.17})-(\ref{7.19}) imply that, when minimizing the
functional \newline
$I_{\lambda,\alpha}\left( v,s\right)$ defined in (\ref{7.12}), we obtain
good approximations for both: the pair of functions $\left( \overline{u}%
^{\ast },\overline{m}^{\ast }\right) ,$ generated by the ideal solution $%
\left( u^{\ast },m^{\ast }\right) $ via (\ref{7.9}) and for this ideal
solution as well.

\begin{proof}[Proof of \cref{Theorem 6.1}] Without yet specifying\textbf{\ }the values
of $\delta $ and $\lambda \left( \delta \right) ,$ we set $\lambda _{2}$ as
in (\ref{7.13}), $\lambda \geq \lambda _{2}$ and assume (\ref{7.16}). Then,
using (\ref{7.9}) and the analog of (\ref{6.5}) for the
functional $I_{\lambda ,\alpha },$ we obtain 
\begin{equation}
\left. 
\begin{array}{c}
e^{2ac^{\lambda }}\left( \left( T+c\right) /c\right) ^{\lambda }\exp \left[
-2\left( T\left( 1-\gamma \right) +c\right) ^{\lambda }\right] \times \\ 
\times \left[ I_{\lambda ,\alpha }\left( \overline{u}^{\ast },\overline{m}%
^{\ast }\right) -I_{\lambda ,\alpha }\left( v_{\min ,\lambda },s_{\min
,\lambda }\right) \right] - \\ 
-e^{2ac^{\lambda }}\left( \left( T+c\right) /c\right) ^{\lambda }\exp \left[
-2\left( T\left( 1-\gamma \right) +c\right) ^{\lambda }\right] \times \\ 
\times \left[ I_{\lambda ,\alpha }^{\prime }\left( v_{\min ,\lambda
},s_{\min ,\lambda }\right) ,\left( \overline{u}^{\ast }-v_{\min ,\lambda },%
\overline{m}^{\ast }-s_{\min ,\lambda }\right) \right] \geq \\ 
\geq C_{3}\left( \left\Vert \overline{u}^{\ast }-v_{\min ,\lambda
}\right\Vert _{H^{1,0}\left( Q_{\gamma T}\right) }^{2}+\left\Vert \overline{m%
}^{\ast }-s_{\min ,\lambda }\right\Vert _{H^{1,0}\left( Q_{\gamma T}\right)
}^{2}\right) .%
\end{array}%
\right.  \label{7.20}
\end{equation}%
By (\ref{6.6}) 
\begin{equation*}
-\left[ I_{\lambda ,\alpha }^{\prime }\left( v_{\min ,\lambda },s_{\min
,\lambda }\right) ,\left( \overline{u}^{\ast }-v_{\min ,\lambda },\overline{m%
}^{\ast }-s_{\min ,\lambda }\right) \right] \leq 0.
\end{equation*}
Also, obviously $-I_{\lambda ,\alpha }\left( v_{\min ,\lambda },s_{\min
,\lambda }\right) \leq 0.$ Hence, (\ref{7.20}) implies%
\begin{equation}
\left. 
\begin{array}{c}
e^{2ac^{\lambda }}\left( \left( T+c\right) /c\right) ^{\lambda }\exp \left[
-2\left( T\left( 1-\gamma \right) +c\right) ^{\lambda }\right] I_{\lambda
,\alpha }\left( \overline{u}^{\ast },\overline{m}^{\ast }\right) \geq \\ 
\geq \left\Vert \overline{u}^{\ast }-v_{\min ,\lambda }\right\Vert
_{H^{1,0}\left( Q_{\gamma T}\right) }^{2}+\left\Vert \overline{m}^{\ast
}-s_{\min ,\lambda }\right\Vert _{H^{1,0}\left( Q_{\gamma T}\right) }^{2},%
\text{ }\forall \lambda \geq \lambda _{2}.%
\end{array}%
\right.  \label{7.21}
\end{equation}

We now estimate the left hand side of (\ref{7.21}) from the above. By (\ref%
{5.4}), (\ref{6.8}), (\ref{6.9}), (\ref{7.5}), (\ref{7.9})-(\ref{7.12}) and (%
\ref{7.16}) 
\begin{equation}
\left. 
\begin{array}{c}
I_{\lambda ,\alpha }\left( \overline{u}^{\ast },\overline{m}^{\ast }\right)
=J_{\lambda ,\alpha }\left( \overline{u}^{\ast }+\widehat{u}_{0},\overline{m}%
^{\ast }+\widehat{m}_{0}\right) \leq \\ 
\leq e^{-2ac^{\lambda }}\dint\limits_{Q_{T}}\left[ \left( L_{1}\left( 
\overline{u}^{\ast }+\widehat{u}_{0},\overline{m}^{\ast }+\widehat{m}%
_{0}\right) \right) +qd\left( L_{2}\left( \overline{u}^{\ast }+\widehat{u}%
_{0},\overline{m}^{\ast }+\widehat{m}_{0}\right) \right) ^{2}\right] \varphi
_{\lambda }^{2}dxdt+ \\ 
+C_{3}e^{-\left( a-1\right) c^{\lambda }}.%
\end{array}%
\right.  \label{7.22}
\end{equation}%
Using (\ref{7.4}) and (\ref{7.9}), we obtain%
\begin{equation}
\left( u^{\ast }-\widehat{u}_{0}^{\ast }+\widehat{u}_{0},m^{\ast }-\widehat{m%
}_{0}^{\ast }+\widehat{m}_{0}\right) =\left( u^{\ast },m^{\ast }\right)
+\left( \widehat{u}_{0}-\widehat{u}_{0}^{\ast },\widehat{m}_{0}-\widehat{m}%
_{0}^{\ast }\right) .  \label{7.23}
\end{equation}%
It follows from (\ref{2.1}), (\ref{2.2}), (\ref{7.0}), (\ref{7.1}), (\ref%
{7.3}) and (\ref{7.23}) that 
\begin{equation}
\left. 
\begin{array}{c}
\left\Vert L_{i}\left( \overline{u}^{\ast }+\widehat{u}_{0},\overline{m}%
^{\ast }+\widehat{m}_{0}\right) \right\Vert _{L_{2}\left( Q_{T}\right) }^{2}=
\\ 
=\left\Vert L_{i}\left( \left( u^{\ast },m^{\ast }\right) +\left( \widehat{u}%
_{0}-\widehat{u}_{0}^{\ast },\widehat{m}_{0}-\widehat{m}_{0}^{\ast }\right)
\right) \right\Vert _{L_{2}\left( Q_{T}\right) }^{2}\leq C_{3}\delta ^{2},%
\text{ }i=1,2.%
\end{array}%
\right.  \label{7.230}
\end{equation}%
Hence, using (\ref{5.7}) and (\ref{7.22}), we obtain%
\begin{multline} \label{7.24}
\left\Vert v_{\min ,\lambda }-\overline{u}^{\ast }\right\Vert
_{H^{1,0}\left( Q_{\gamma T}\right) }^{2}+\left\Vert s_{\min ,\lambda }-%
\overline{m}^{\ast }\right\Vert _{H^{1,0}\left( Q_{\gamma T}\right) }^{2}\leq \\
\leq C_{3}\exp \left( 3\left( T+c\right) ^{\lambda }\right) \delta ^{2}+ C_{3}\left( \frac{T+c}{c}\right) ^{\lambda }e^{\left( a+1\right) c^{\lambda
}}\exp \left[ -2\left( T\left( 1-\gamma \right) +c\right) ^{\lambda }\right]
.
\end{multline}

Until now we have not used conditions (\ref{7.14}) and (\ref{7.15}). Now we
start to use them. Hence, elementary calculations lead to: 
\begin{equation}
\exp \left( 3\left( T+c\right) ^{\lambda \left( \delta \right) }\right)
\delta ^{2}=\delta ,\text{ }\forall \delta \in \left( 0,\delta _{0}\right) ,
\label{7.25}
\end{equation}%
\begin{equation*}
C_{3}\left( \frac{T+c}{c}\right) ^{\lambda \left( \delta \right) }e^{\left(
a+1\right) c^{\lambda \left( \delta \right) }}\exp \left[ -2\left( T\left(
1-\gamma \right) +c\right) ^{\lambda \left( \delta \right) }\right] \leq
\end{equation*}%
\begin{equation}
\leq C_{3}\exp \left[ -\left( T\left( 1-\gamma \right) +c\right) ^{\lambda
\left( \delta \right) }\right] \leq C_{3}\delta ,\text{ }\forall \delta \in
\left( 0,\delta _{0}\right) .  \label{7.26}
\end{equation}

The first target estimate (\ref{7.17}) of this theorem follows immediately
from (\ref{7.24})-(\ref{7.26}).\ Next, using (\ref{7.17}), (\ref{7.18}) and
the same considerations as the ones in the derivations of (\ref{7.23}) and (%
\ref{7.230}), we obtain the second target estimate (\ref{7.19}).
\end{proof}

\section{Global Convergence of the Gradient Descent Method}

\label{sec:7}

We assume in this section that conditions of \cref{Theorem 6.1} are in
place. Suppose that%
\begin{equation}
\left( u^{\ast },m^{\ast }\right) \in B^{\ast }\left( \frac{R}{3}-\delta
\right) ,\mbox{ }\frac{R}{3}-\delta >0.  \label{8.1}
\end{equation}%
We also assume that for $\lambda =\lambda \left( \delta \right) \geq \lambda
_{2}$ the minimizer $\left( u_{\min ,\lambda },m_{\min ,\lambda }\right) $
of the functional $J_{\lambda ,\alpha }\left( u,m\right) $ on the set $%
\overline{B\left( R\right) },$ which was found in \cref{Theorem 5.1}, is
such that 
\begin{equation}
\left( \widetilde{v}_{\min ,\lambda },\widetilde{s}_{\min ,\lambda }\right)
=\left( u_{\min ,\lambda },m_{\min ,\lambda }\right) \in B\left( \frac{R}{3}%
\right) .  \label{8.2}
\end{equation}%
Formulas (\ref{7.17})-(\ref{7.19}) and (\ref{8.1}) indicate that assumption (%
\ref{8.2}) is reasonable.

We construct now the gradient descent method of the minimization of the
functional $J_{\lambda ,\alpha }.$ Consider an arbitrary pair of functions%
\begin{equation}
\left( u^{0},m^{0}\right) \in B\left( \frac{R}{3}\right) .  \label{8.3}
\end{equation}%
The iterative sequence of the gradient descent method with the step size $%
\xi >0$ is: 
\begin{equation}
\left( u^{n},m^{n}\right) =\left( u^{n-1},m^{n-1}\right) -\xi J_{\lambda
,\alpha }^{\prime }\left( u^{n-1},m^{n-1}\right) ,\mbox{ }n=1,2,...
\label{8.4}
\end{equation}%
By \cref{Theorem 6.1} $J_{\lambda ,\alpha }^{\prime }\left( u,m\right) \in
H_{2,0,0}^{k_{n}}\left( Q_{T}\right) $, see (\ref{5.31}). Hence, all pairs $%
\left( u^{n},m^{n}\right) $ have the same initial values $\left(
u^{n},m^{n}\right) \left( x,0\right) =\left( u^{0},m^{0}\right) \left(
x,0\right) =\left( u_{0}\left( x\right) ,m_{0}\left( x\right) \right) .$

\begin{theorem} \label{Theorem 7.1}{Let conditions of \cref{Theorem 6.1} as well as
conditions (\ref{8.1})-(\ref{8.4}) hold. Then there exists a number }$\xi
_{0}\in \left( 0,1\right) ${\ such that for any }$\xi \in \left( 0,\xi
_{0}\right) ${\ there exists a number }$\rho =\rho \left( \xi \right)
\in \left( 0,1\right) ${\ such that for all }$n\geq 1$%
\begin{equation}
\left. 
\begin{array}{c}
\left( u^{n},m^{n}\right) \in B\left( R\right) , \\ 
\left\Vert u^{n}-u^{\ast }\right\Vert _{_{H^{1,0}\left( Q_{\gamma T}\right)
}}+\left\Vert m^{n}-m^{\ast }\right\Vert _{_{H^{1,0}\left( Q_{\gamma
T}\right) }}\leq \\ 
\leq C_{3}\sqrt{\delta }+C_{3}\rho ^{n}\left( \left\Vert u_{\min ,\lambda
}-u^{0}\right\Vert _{H^{k_{n}}\left( Q_{T}\right) }+\left\Vert m_{\min
,\lambda }-m^{0}\right\Vert _{H^{k_{n}}\left( Q_{T}\right) }\right) .%
\end{array}%
\right.  \label{8.6}
\end{equation}%
{\ }
\end{theorem}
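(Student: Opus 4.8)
The plan is to reduce \cref{Theorem 7.1} to the abstract convergence theory for gradient descent applied to a strongly convex functional with Lipschitz continuous derivative, and then to combine the resulting geometric convergence towards the minimizer with the accuracy estimate of \cref{Theorem 6.1}. First I would record the two structural properties of $J_{\lambda ,\alpha }$ furnished by \cref{Theorem 5.1}: the Lipschitz continuity of $J_{\lambda ,\alpha }^{\prime }$ on $\overline{B\left( R\right) }$ from (\ref{6.3}), with Lipschitz constant $D$, and the strong convexity estimate (\ref{6.5}). The crucial observation is that the last line of (\ref{6.5}), namely the term $\left( \alpha /2\right) \left\Vert \left( u_{2}-u_{1},m_{2}-m_{1}\right) \right\Vert _{H_{2}^{k_{n}}\left( Q_{T}\right) }^{2}$, already expresses strong convexity of $J_{\lambda ,\alpha }$ with respect to the $H_{2}^{k_{n}}\left( Q_{T}\right) $ norm, with modulus proportional to $\alpha$; the $H^{1,0}\left( Q_{\gamma T}\right) $ term only improves the bound and may be discarded for this purpose.

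With these two properties in hand, I would invoke the combination of Lemma 2.1 and Theorem 2.1 of \cite{Bak}, exactly as was done at the end of the proof of \cref{Theorem 5.1}. That abstract result yields, for each step size $\xi \in \left( 0,\xi _{0}\right) $ with $\xi _{0}\in \left( 0,1\right) $ determined by $D$ and $\alpha$, a contraction factor $\rho =\rho \left( \xi \right) \in \left( 0,1\right) $ such that the iterates (\ref{8.4}) obey
\begin{equation*}
\left\Vert \left( u^{n},m^{n}\right) -\left( u_{\min ,\lambda },m_{\min ,\lambda }\right) \right\Vert _{H_{2}^{k_{n}}\left( Q_{T}\right) }\leq \rho ^{n}\left\Vert \left( u^{0},m^{0}\right) -\left( u_{\min ,\lambda },m_{\min ,\lambda }\right) \right\Vert _{H_{2}^{k_{n}}\left( Q_{T}\right) },\text{ }n\geq 1.
\end{equation*}

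To establish the inclusion $\left( u^{n},m^{n}\right) \in B\left( R\right) $ I would proceed in two steps. Since $J_{\lambda ,\alpha }^{\prime }\left( u,m\right) \in H_{2,0,0}^{k_{n}}\left( Q_{T}\right) $ has vanishing initial values, each update in (\ref{8.4}) preserves the initial data, so every iterate satisfies $u^{n}\left( x,0\right) =u_{0}\left( x\right) $, $m^{n}\left( x,0\right) =m_{0}\left( x\right) $, as already noted above. For the norm bound I would combine the contraction with the margins built into (\ref{8.1})--(\ref{8.3}): since both $\left( u^{0},m^{0}\right) \in B\left( R/3\right) $ by (\ref{8.3}) and $\left( u_{\min ,\lambda },m_{\min ,\lambda }\right) \in B\left( R/3\right) $ by (\ref{8.2}), the triangle inequality gives $\left\Vert \left( u^{0},m^{0}\right) -\left( u_{\min ,\lambda },m_{\min ,\lambda }\right) \right\Vert _{H_{2}^{k_{n}}\left( Q_{T}\right) }<2R/3$, whence the displayed estimate forces $\left\Vert \left( u^{n},m^{n}\right) -\left( u_{\min ,\lambda },m_{\min ,\lambda }\right) \right\Vert _{H_{2}^{k_{n}}\left( Q_{T}\right) }<2R/3$, and a second triangle inequality yields $\left\Vert \left( u^{n},m^{n}\right) \right\Vert _{H_{2}^{k_{n}}\left( Q_{T}\right) }<R$. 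This buffer is precisely the role of hypotheses (\ref{8.1})--(\ref{8.3}), keeping both the minimizer and the starting point well inside $B\left( R\right) $ so that the unconstrained iteration never leaves the ball on which (\ref{6.3}) and (\ref{6.5}) are valid.

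Finally, to reach the target estimate (\ref{8.6}) I would pass from $H_{2}^{k_{n}}\left( Q_{T}\right) $ convergence to the minimizer over to the $H^{1,0}\left( Q_{\gamma T}\right) $ accuracy bound by the triangle inequality
\begin{equation*}
\left\Vert u^{n}-u^{\ast }\right\Vert _{H^{1,0}\left( Q_{\gamma T}\right) }\leq \left\Vert u^{n}-u_{\min ,\lambda }\right\Vert _{H^{1,0}\left( Q_{\gamma T}\right) }+\left\Vert u_{\min ,\lambda }-u^{\ast }\right\Vert _{H^{1,0}\left( Q_{\gamma T}\right) },
\end{equation*}
and analogously for $m$. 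On each right-hand side the second term is bounded by $C_{3}\sqrt{\delta }$ via the estimate (\ref{7.19}) of \cref{Theorem 6.1}, after the identification (\ref{8.2}), while the first term is controlled by the geometric estimate above together with the continuous embedding $H^{k_{n}}\left( Q_{T}\right) \hookrightarrow H^{1,0}\left( Q_{\gamma T}\right) $, valid since $Q_{\gamma T}\subset Q_{T}$ and $k_{n}\geq 1$. Summing the $u$ and $m$ contributions produces (\ref{8.6}). I expect the only genuinely delicate point to be the invariance of $B\left( R\right) $ under the iteration, since the strong convexity and the Lipschitz bound hold only on the ball; the $R/3$ margins in (\ref{8.1})--(\ref{8.3}) are exactly what render the contraction argument self-consistent, and the remainder is a routine combination of the cited abstract theorem of \cite{Bak} with \cref{Theorem 6.1}.
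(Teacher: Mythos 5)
Your proposal is correct and follows essentially the same route as the paper: the paper likewise obtains the ball invariance $\left( u^{n},m^{n}\right) \in B\left( R\right) $, the existence of $\xi _{0}$ and $\rho \left( \xi \right) $, and the geometric contraction toward $\left( u_{\min ,\lambda },m_{\min ,\lambda }\right) $ (its estimate (\ref{8.7})) by invoking an abstract convergence theorem for gradient descent of strongly convex functionals on a ball with the $R/3$ margins of (\ref{8.1})--(\ref{8.3}), and then finishes exactly as you do, by triangle inequality combining this contraction with (\ref{7.18}), (\ref{7.19}) and the identification (\ref{8.2}). The one slip is your citation: the combination of Lemma 2.1 and Theorem 2.1 of \cite{Bak} gives existence and uniqueness of the constrained minimizer (as used at the end of the proof of \cref{Theorem 5.1}), not gradient-descent convergence; the self-consistent induction you flag (one-step contraction plus the margins keeping all iterates inside $B\left( R\right) $, where (\ref{6.3}) and (\ref{6.5}) hold) is precisely the content of the result the paper actually cites, namely Theorem 6 of \cite{SAR}.
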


\begin{proof} The first line of (\ref{8.6}), existence of numbers $\xi
_{0} $ and $\rho \left( \xi \right) $ as well as estimate 
\begin{equation}
\hspace{-1cm}\left. 
\begin{array}{c}
\left\Vert u_{\min ,\lambda }-u^{n}\right\Vert _{_{H^{k_{n}}\left( Q_{\gamma
T}\right) }}+\left\Vert m_{\min ,\lambda }-m^{n}\right\Vert
_{_{H^{k_{n}}\left( Q_{\gamma T}\right) }}\leq \\ 
\leq C_{3}\rho ^{n}\left( \left\Vert u_{\min ,\lambda }-u^{0}\right\Vert
_{H^{k_{n}}\left( Q_{T}\right) }+\left\Vert m_{\min ,\lambda
}-m^{0}\right\Vert _{H^{k_{n}}\left( Q_{T}\right) }\right) .%
\end{array}%
\right.  \label{8.7}
\end{equation}%
follow immediately from (\ref{8.1})-(\ref{8.4}) and \cite[Theorem 6]{SAR}.
Next, triangle inequality, (\ref{7.18}), (\ref{7.19}), (\ref{8.2}) and (\ref%
{8.7}) lead to the desired result.
\end{proof}

\begin{remark}
\begin{enumerate}
\item Since $R>0$\ is an arbitrary number and the pair $\left(
u_{0},m_{0}\right) $\ in (\ref{8.3}) is an arbitrary point of the set $%
B\left( R/3\right) ,$\ then \cref{Theorem 7.1} implies the global
convergence of the iterative procedure (\ref{8.4}), see \cref{def:1.1} in
section 1.

\item {{Even though the above theory requires sufficiently large
values of the parameter $\lambda$ our numerical studies of section
8 demonstrate that $\lambda =2$ is sufficient. Furthermore, the
numerical experience of all previous publications on the convexification
method demonstrates that values $\lambda \in \left[ 1,5\right] $
are sufficient \cite{KL,MFGbook,MFG7,MFGCAMWA}. This is similar with an asymptotic theory.
Indeed, such a theory typically states that if a certain parameter $X$
 is sufficiently large, then a certain formula $Y$ is
sufficiently accurate. However, in a computational practice only
results of numerical experiments can tell one which exactly values of $X$ ensure a good accuracy of $Y$.}}
\end{enumerate}
\end{remark}

\section{Numerical Studies}

\label{sec:8}

Numerical tests were performed to test the performance of the
convexification method for the forecasting problem. {While our theoretical
results hold for any dimension $n \geq 1$, we restrict attention to the case 
$n=1$ in our numerical experiments. In terms of public sentiment
forecasting, this special case may be relevant to settings in which there is
a single dominant direction of variation in sentiment. For example, in our
forthcoming work \cite{Kexper} we consider the public sentiment (positive
and negative) regarding interventions against COVID-19 as a one-dimensional
quantity. In the numerical studies presented here,} we chose our other
parameters for these numerical experiments as follows. We used $a=1.1$, $c=3$%
, and the regularization parameter $\alpha =10^{-5}$. These values of
parameters are chosen by a trial and error process to ensure that the method
works well. We used $\lambda =2$ following the same procedure as in previous
publications of the method, see \cite{MFG7,MFGCAMWA}, for example. We test
the method on the following model: 
\begin{gather}
u_{t}+u_{xx}+\frac{1}{2}u_{x}^{2}+\int_{\Omega }K(x,y)m(y,t)dy+f(x,t)m=0,
\label{num:1} \\
m_{t}-m_{xx}+\partial _{x}(mu_{x})=0,  \label{num:2} \\
u_{x}=m_{x}=0,\quad \text{on $\{\pm 1\}\times \lbrack 0,T]$},  \label{num:3}
\end{gather}%
i.e. $r(x,t)=-1$ in equation (\ref{2.1}). Both the spatial dimension $[-1,1]$
and temporal dimension $[0,T]$ are evenly discretized with step size $0.1$.

The minimization of the functional $J_{\lambda ,\alpha }(u,m)$ is performed
using the \textbf{fmincon} function in MATLAB. This function requires a
starting point, which is simply chosen as 
\begin{equation*}
u_{\text{start}}(x,t)=u(x,0),\quad m_{\text{start}}(x,t)=m(x,0),\quad
\forall t\in \lbrack 0,T].
\end{equation*}%
The algorithm terminates when the \textit{first order optimality} is less
than $10^{-5}$. The first order optimality for \textbf{fmincon} at $(u,m)$
is defined as the ratio 
\begin{equation*}
\frac{\Vert \nabla _{\text{proj}}J_{\lambda ,\alpha }(u,m)\Vert}{\Vert
\nabla J_{\lambda ,\alpha }(u_{\text{start}},m_{\text{start}})\Vert},
\end{equation*}%
where $\nabla _{\text{proj}}J_{\lambda ,\alpha }(u,m)$ is the projected
gradient of the functional $J_{\lambda ,\alpha }(u,m)$ onto the space of the
constraints. The constraints include the Neumann boundary conditions (\ref%
{num:3}) and the data $u(x,0)$ and $m(x,0)$, see (\ref{5.4}). We refer to
the documentation of \textbf{fmincon} for more details on the algorithm.

In all tests, $3\%$ of noise is added to the initial data according to the
following noise model: $\text{data} = \text{true data} + \text{noise}$,
where $\text{noise} = 0.03 \cdot \|\text{true data}\|_2 \cdot \text{rand}$.
In this noise model, data/true data are arrays representing the discretized
version of the corresponding $u(x,0)$ and $m(x,0)$, rand is a random array
whose entries are uniformly distributed in $[-1, 1]$, and $\| \cdot \|_2$ is
the vector 2-norm.

We test the convexification method in two cases:

\textbf{The ideal case:} The true solution can be found to compare with the
predicted solution by the convexification method. It is hard to construct a
precise solution of problem (\ref{num:1})-(\ref{num:3}). Nevertheless, we
need to verify somehow the accuracy of our numerical technique. Hence, we
proceed similarly with \cite{MFG7,MFGCAMWA}. To be specific, the true
solution is found as follows:

\begin{enumerate}
\item Choose a function $u(x,t)$ satisfying (\ref{num:3}) and an initial
condition $m_0(x)$ for $m(x,t)$.

\item Solve for $m(x,t)$ using (\ref{num:2}), (\ref{num:3}), and $m(x,0) =
m_0(x)$.

\item Assuming that $m(x,t)$ does not vanish at any point $(x,t)$, we set
the function $f(x,t)$ in equation (\ref{num:1}) as: 
\begin{equation*}
f(x,t)=-\frac{1}{m(x,t)}\left[ u_{t}+u_{xx}+\frac{1}{2}u_{x}^{2}+\int_{%
\Omega }K(x,y)m(y,t)dy\right] .
\end{equation*}
\end{enumerate}

Then, we will have a MFG system that admits the solution $(u(x,t), m(x,t))$.
We extract the initial values $u(x,0)$ and $m(x,0)$ from the found solution
as data.

\textbf{A more realistic case:} This is when we set in (\ref{num:1}) $f=0$,
and thus, the true solution cannot easily be found. An important point of
our study of this case is that we will evaluate the relative cost without
the CWF and regularization terms at the solution: to see how well the
minimizer of the functional $J_{\lambda ,\alpha }$ satisfies the MFG system (%
\ref{num:1}), (\ref{num:2}). This relative cost at the time $t$ is defined
as: 
\begin{equation}
F(t)=\left[ \frac{\mathop{\displaystyle \int}\limits_{-1}^{1}\left[
L_{1}(u(x,t),m(x,t))^{2}+L_{2}(u(x,t),m(x,t))^{2}\right] dx}{%
\mathop{\displaystyle \int}\limits_{-1}^{1}\left[ u^{2}(x,0)+m^{2}(x,0)%
\right] dx}\right]^{\frac{1}{2}} .  \label{num:rel-cost}
\end{equation}%
We note that the CWF is not involved in integrals in (\ref{num:rel-cost}), \
neither is the regularization term. We also point out that $(u,m)$ in (\ref%
{num:rel-cost}) is the minimizer of our target functional $J_{\lambda
,\alpha }$.

\begin{remark}
We observe numerically that the convexification method can predict the
solution of the MFG system accurately up to time $T=1$. See Figures \ref%
{fig:1-u-extended} for an example of the predicted solution $u(x,t)$ over
time for $t\in \lbrack 0,2]$ in the ideal case. The relative cost is also
shown in Figure \ref{fig:1-cost-extended}. We can see that both the
predicted solution and the relative cost behave badly for $t>1$, even for
the ideal case. From the theoretical point of view, this is to be expected
since the solution $u(x,t)$ of the MFG system with the initial data at $%
\left\{ t=0\right\} $ eventually blows up as $t$ increases, as it is well
known even for the simplistic case of the heat equation with the reversed
time $u_{t}+u_{xx}=0$.
On the other hand, the convexification method seeks a solution with a
bounded norm. This means that the minimizer of the functional (\ref{5.6}),
which we use in (\ref{num:rel-cost}), actually represents a compromise
between the stability for the function $m(x,t)$ and the instability for the
function $u(x,t)$. However, this compromise holds only for $t\in \lbrack
0,1],$ and it is broken after $t=1$. Therefore, from now on, we will only
consider the time interval $t\in \lbrack 0,1]$ for the numerical tests.
\end{remark}

\begin{figure}[ht!]
\centering
\subfloat[$t=0$]{\includegraphics[width=0.3\textwidth]{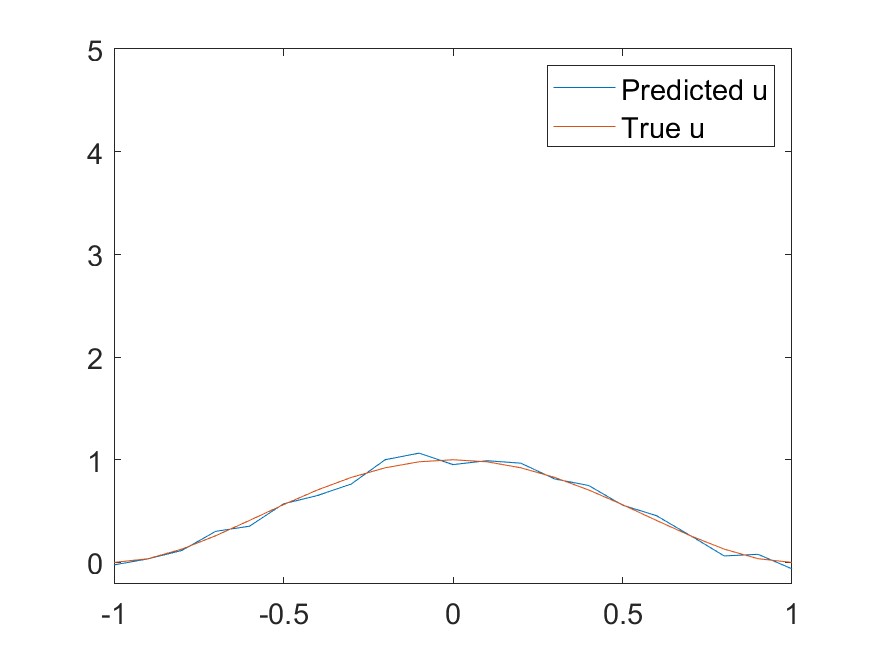}} %
\subfloat[$t=0.6$]{\includegraphics[width=0.3\textwidth]{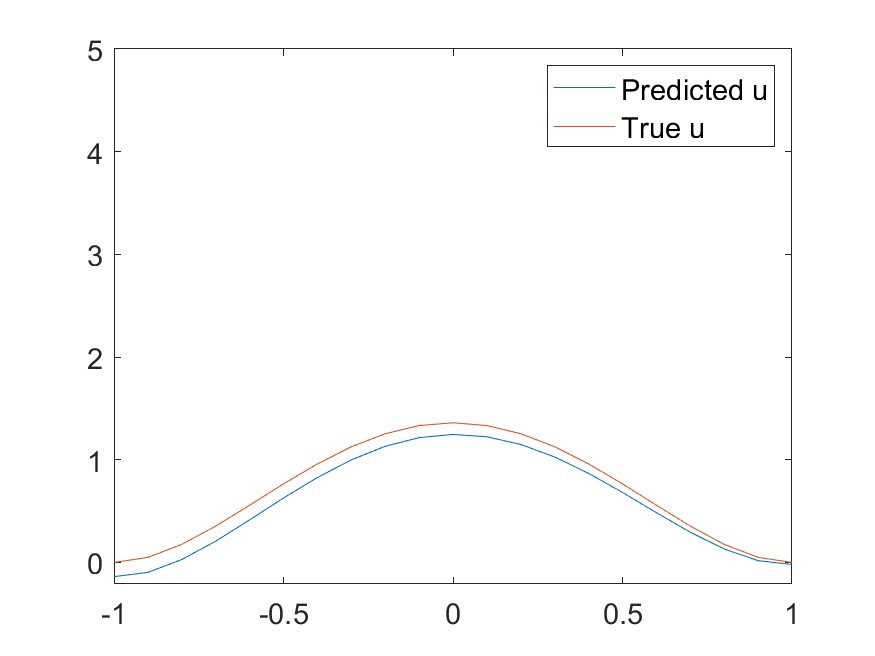}} %
\subfloat[$t=1$]{\includegraphics[width=0.3\textwidth]{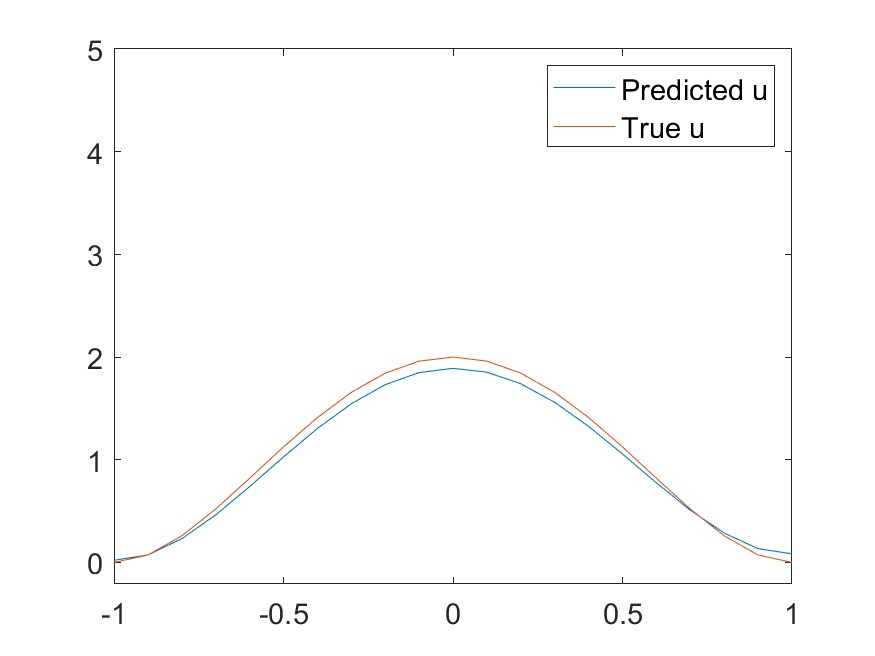}} \newline
\subfloat[$t=1.2$]{\includegraphics[width=0.3\textwidth]{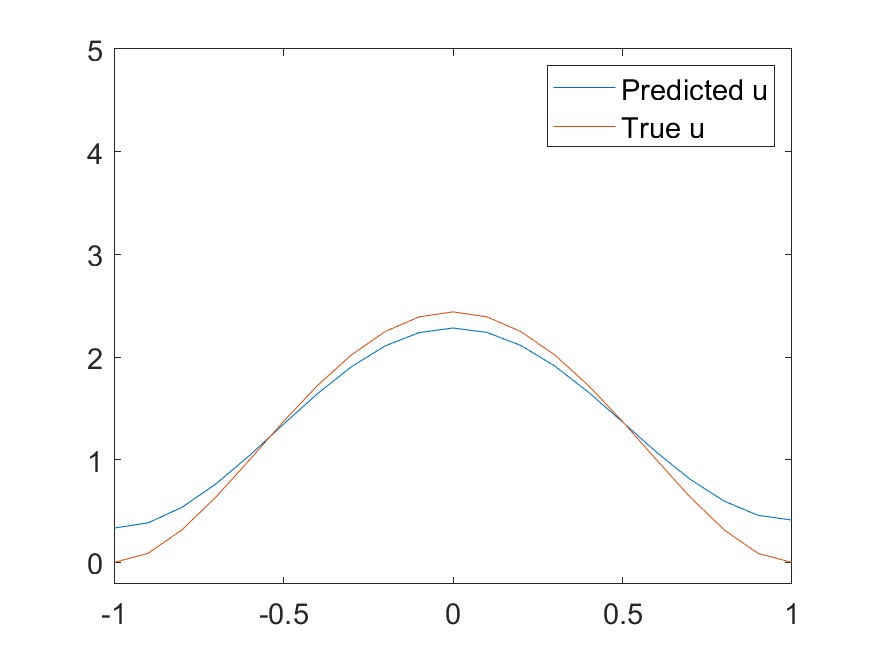}} %
\subfloat[$t=1.6$]{\includegraphics[width=0.3\textwidth]{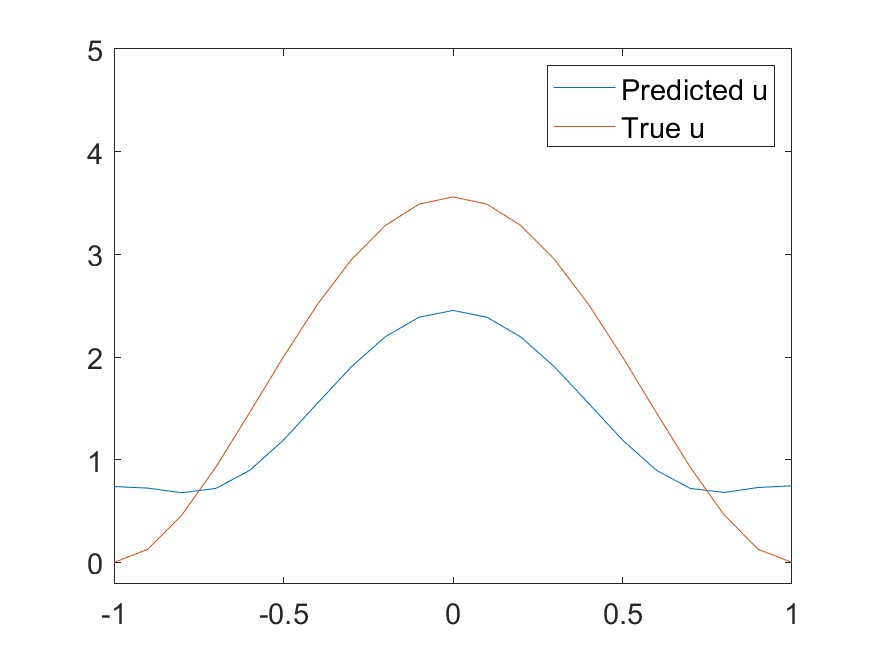}} %
\subfloat[$t=2$]{\includegraphics[width=0.3\textwidth]{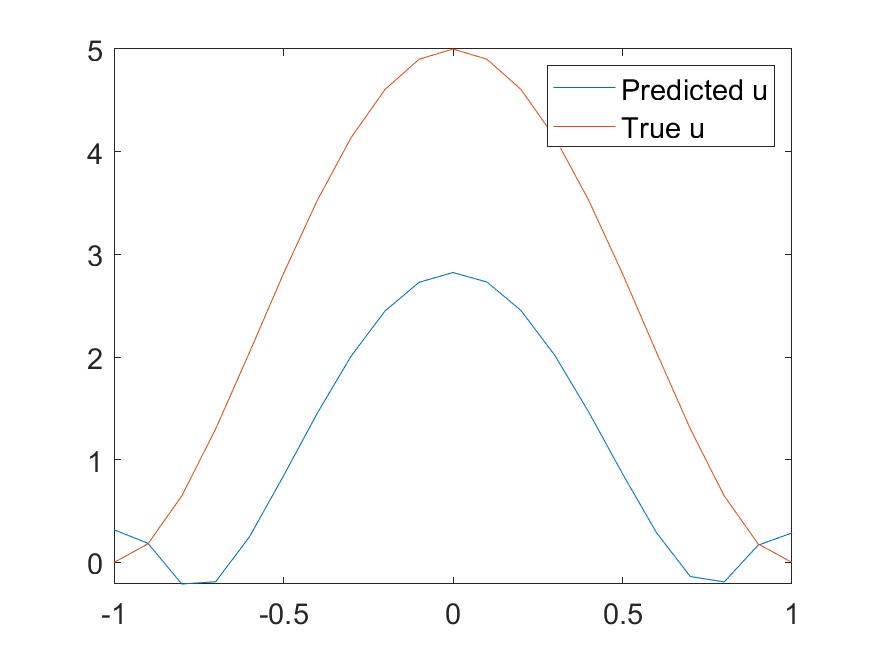}}
\caption{Test 1.1 (an ideal case): Predicted and true $u(x,t)$ for the
extended time interval $t \in [0,2]$.}
\label{fig:1-u-extended}
\end{figure}

\begin{figure}[ht!]
\centering
\includegraphics[width=0.3\textwidth]{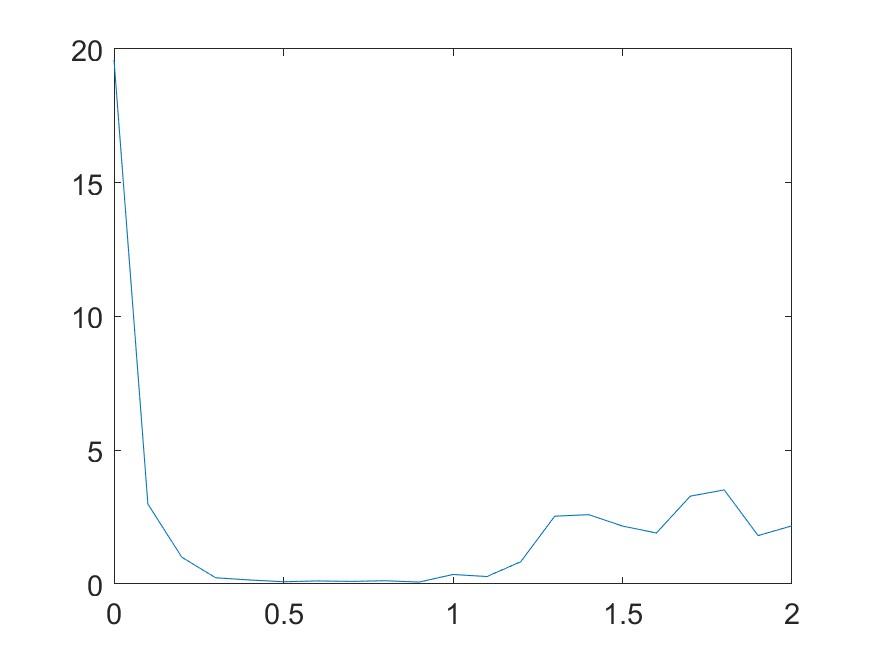}
\caption{Test 1.1 (an ideal case): Relative cost (\protect\ref{num:rel-cost}%
) for the extended time interval $t \in [0,2]$.}
\label{fig:1-cost-extended}
\end{figure}

\begin{remark}
In these numerical studies we evaluated the system with two simple choices for the kernel $K(x,y)$. In particular, we considered the cases when the running payoff function for individual players is equal to the expected value of the sentiment density (corresponding to $K(x,y) \equiv 1$) or the negative of the expected value of the sentiment density (corresponding to $K(x,y) \equiv -1$). These kernels correspond to reward structures in which individuals receive payoffs based on the mean public sentiments held by the population. Numerical results for $K(x,y)\equiv 1$ and for $K(x,y)\equiv -1$ (not shown) are similar. Hence, we will present from now on the results for $K(x,y)\equiv 1$.
\end{remark}

\subsection{The ideal case}

\textbf{Test 1.1:} We choose $u(x,t) = (x^2-1)^2(t^2+1)$ and $m(x,0)=\text{%
exp}\left( 1/(x^2-1) \right)+0.28$. The constant $0.28$ is to ensure that
the integral of $m(x,0)$ is approximately $1$. These functions are actually
the same ones that we choose for Figures \ref{fig:1-u-extended} and \ref%
{fig:1-cost-extended} when showcasing the method's behavior for extended
time. The true and predicted solutions for $u(x,t)$ and $m(x,t)$ are shown
in Figures \ref{fig:1-u} and \ref{fig:1-m}, respectively.

\begin{figure}[ht!]
\centering
\subfloat[$t=0$]{\includegraphics[width=0.3\textwidth]{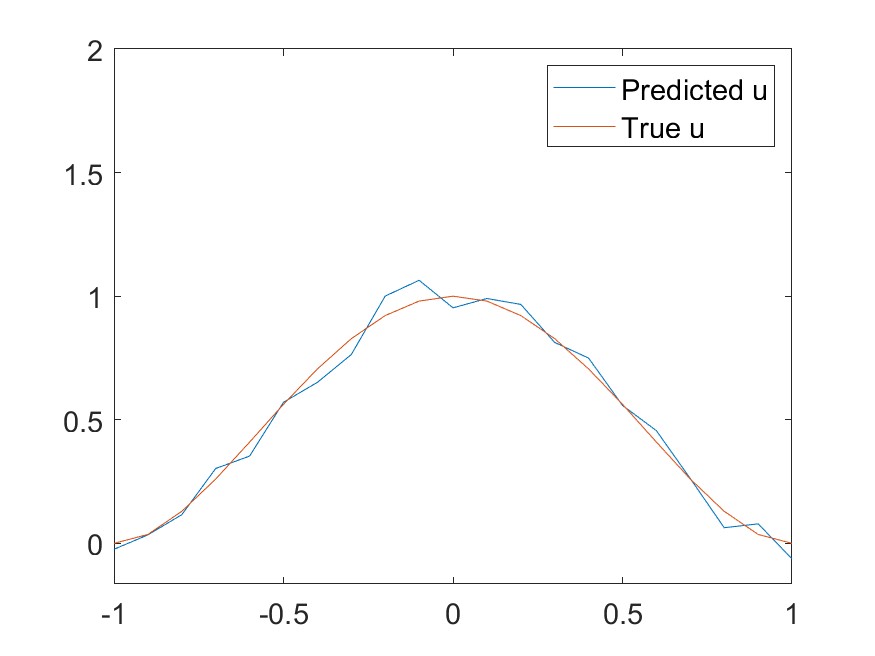}} %
\subfloat[$t=0.6$]{\includegraphics[width=0.3\textwidth]{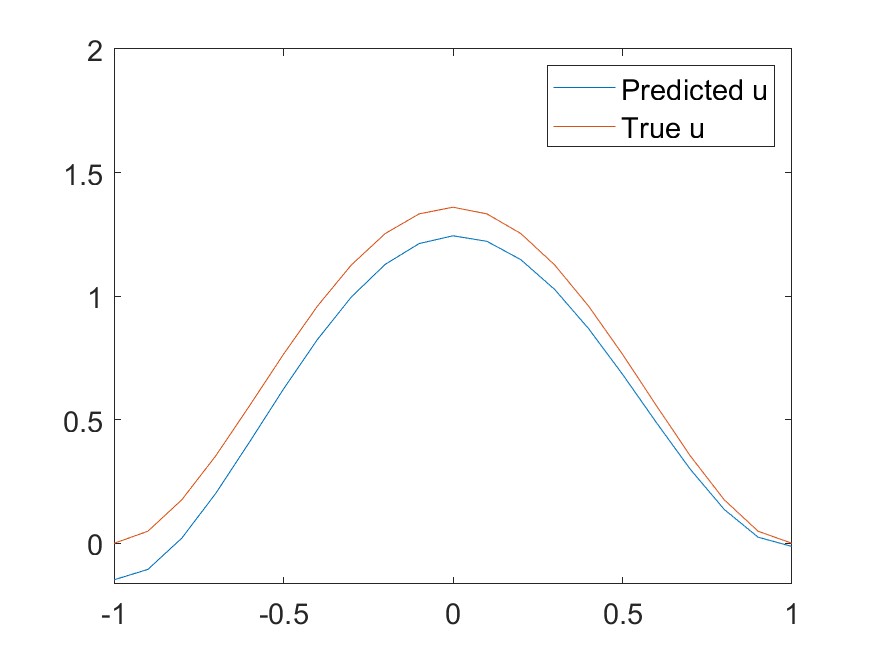}} %
\subfloat[$t=1$]{\includegraphics[width=0.3\textwidth]{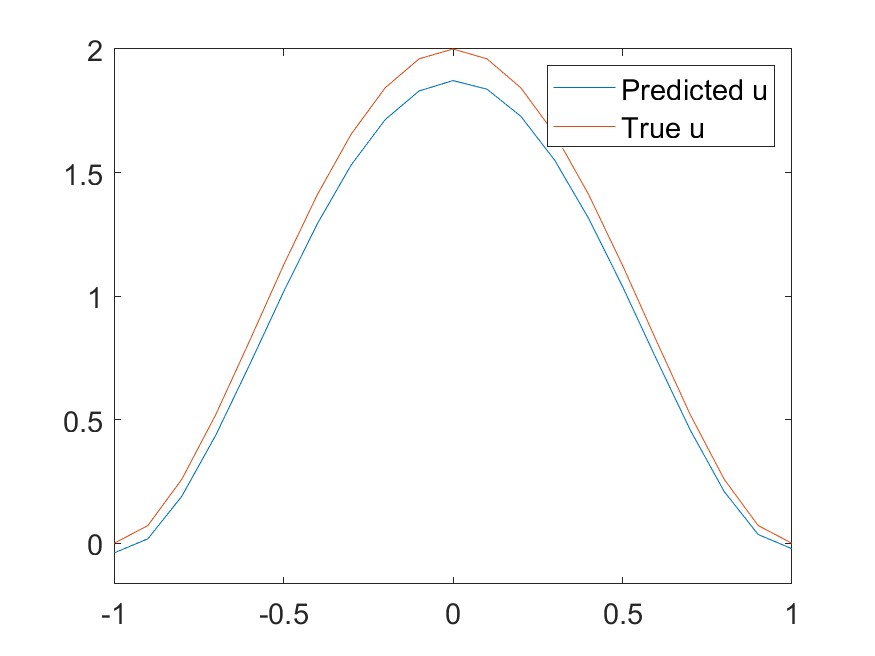}}
\caption{Test 1.1 (an ideal case): True and predicted $u(x,t)$ over time for 
$u(x,t) = (x^2-1)^2(t^2+1)$ and $m(x,0)=\text{exp}\left( 1/(x^2-1)
\right)+0.28$.}
\label{fig:1-u}
\end{figure}

\begin{figure}[ht!]
\centering
\subfloat[$t=0$]{\includegraphics[width=0.3\textwidth]{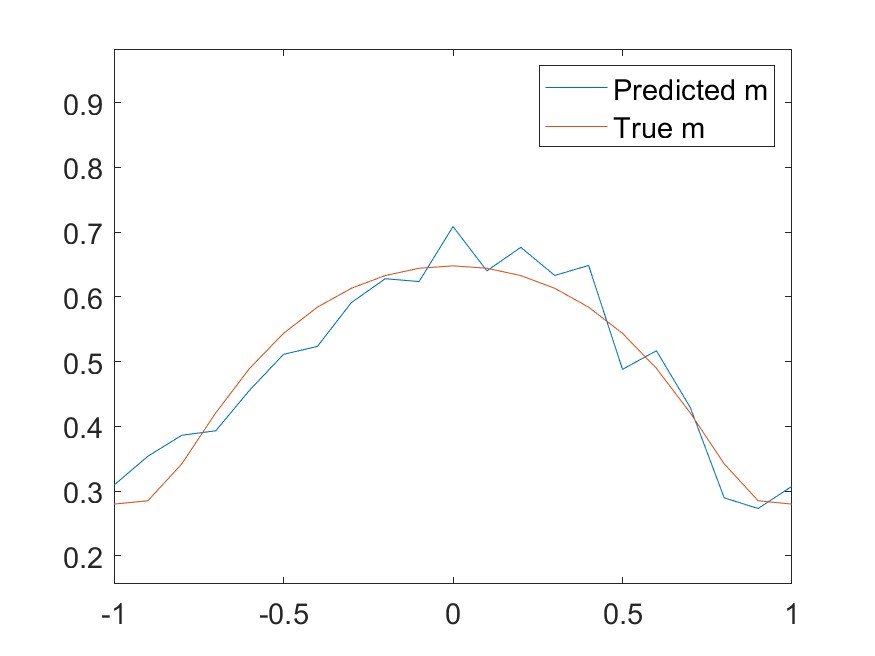}} %
\subfloat[$t=0.6$]{\includegraphics[width=0.3\textwidth]{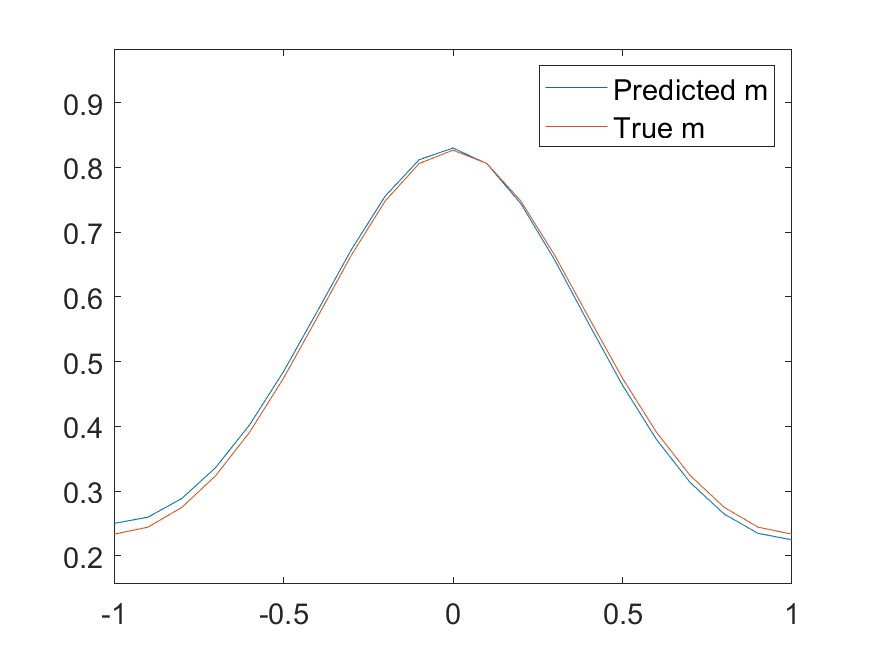}} %
\subfloat[$t=1$]{\includegraphics[width=0.3\textwidth]{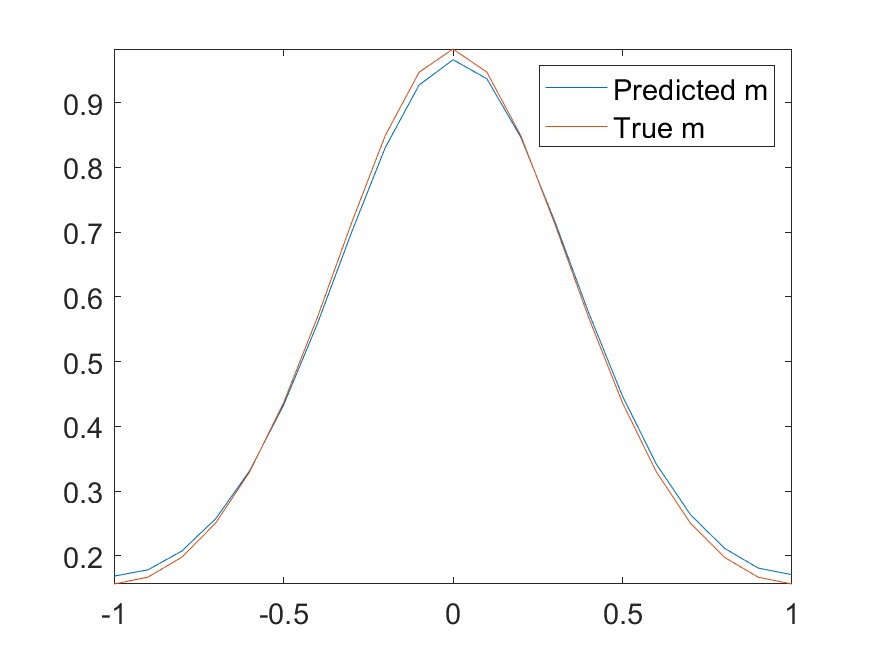}}
\caption{Test 1.1 (an ideal case): True and predicted $m(x,t)$ over time for 
$u(x,t) = (x^2-1)^2(t^2+1)$ and $m(x,0)=\text{exp}\left( 1/(x^2-1)
\right)+0.28$.}
\label{fig:1-m}
\end{figure}

\textbf{Test 1.2:} We choose $u(x,t)=0.1\cos {(2\pi x)}(t+1)$ and $%
m(x,0)=0.5 $. This test considers a more oscillatory $u(x,t)$ while $m(x,t)$
is constant at the initial time. The true and predicted solutions for $%
u(x,t) $ and $m(x,t)$ are shown in Figures \ref{fig:2-u} and \ref{fig:2-m},
respectively.

\begin{figure}[ht!]
\centering
\subfloat[$t=0$]{\includegraphics[width=0.3\textwidth]{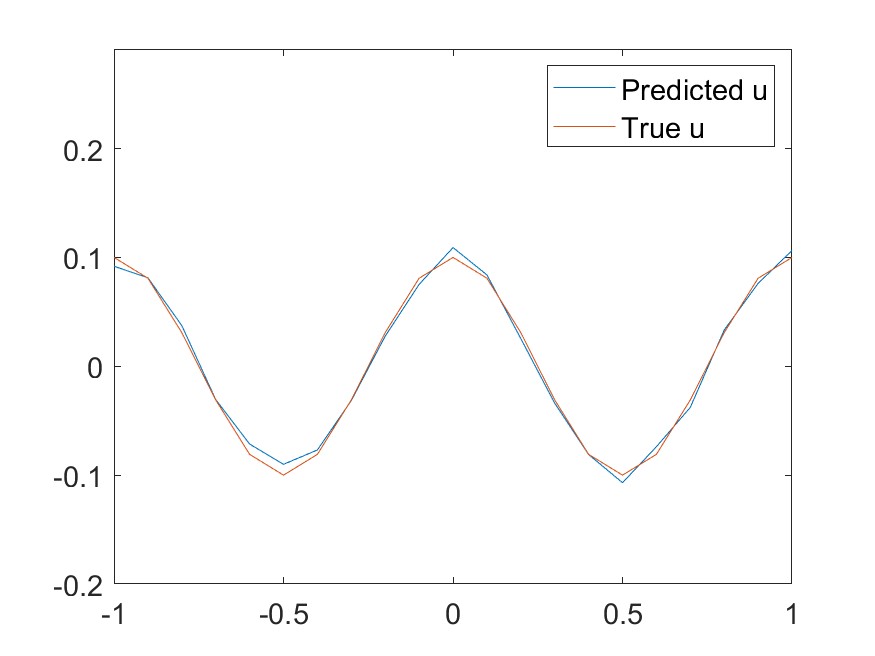}} %
\subfloat[$t=0.6$]{\includegraphics[width=0.3\textwidth]{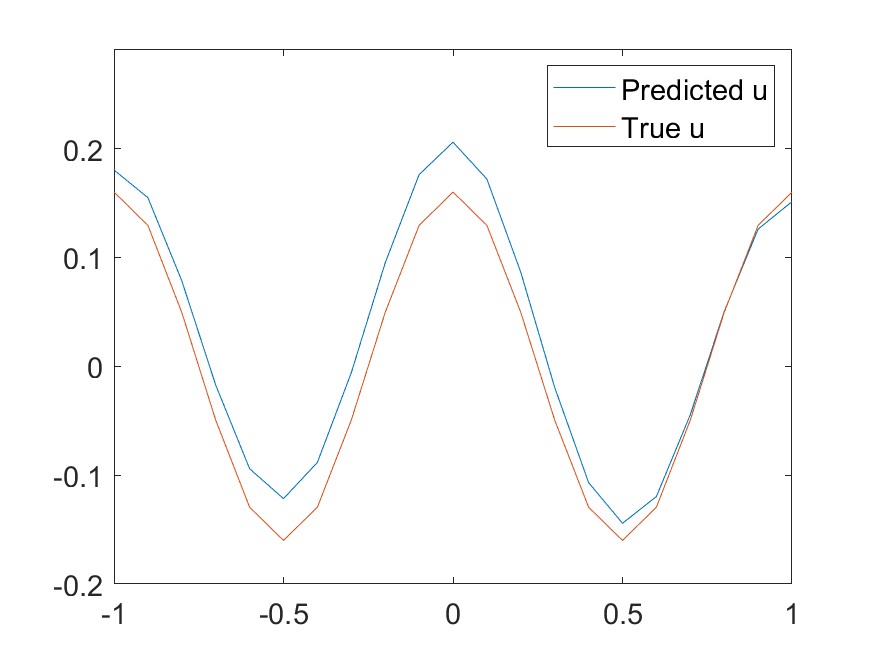}} %
\subfloat[$t=1$]{\includegraphics[width=0.3\textwidth]{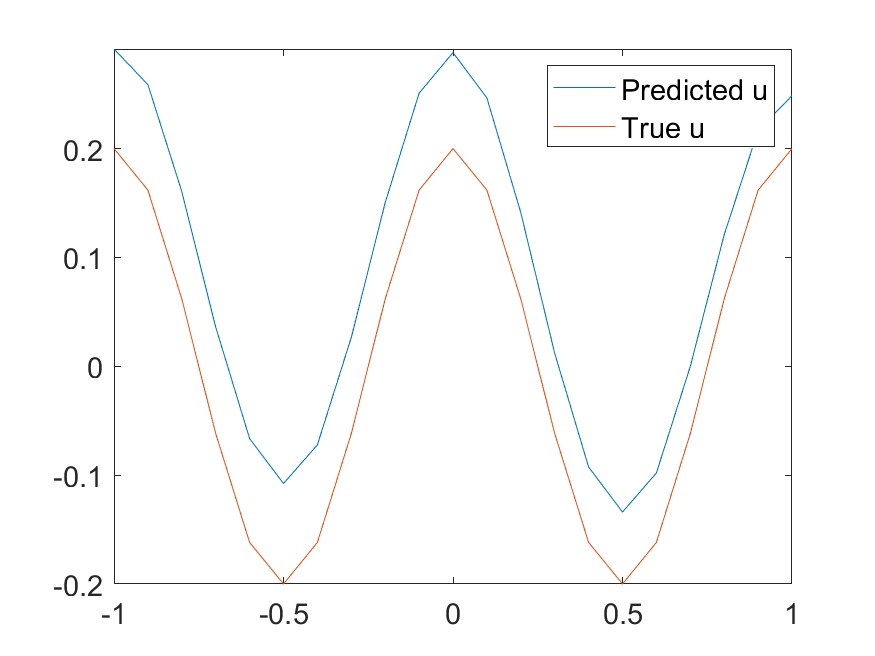}}
\caption{Test 1.2 (an ideal case): True and predicted $u(x,t)$ over time for 
$u(x,t) = 0.1\cos{(2\protect\pi x)}(t+1)$ and $m(x,0)=0.5$.}
\label{fig:2-u}
\end{figure}

\begin{figure}[ht!]
\centering
\subfloat[$t=0$]{\includegraphics[width=0.3\textwidth]{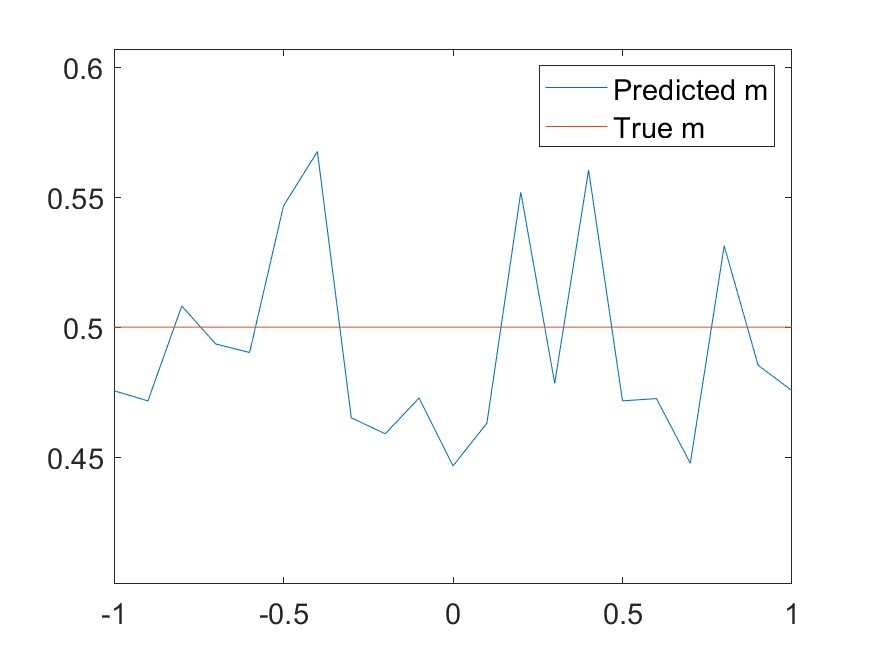}} %
\subfloat[$t=0.6$]{\includegraphics[width=0.3\textwidth]{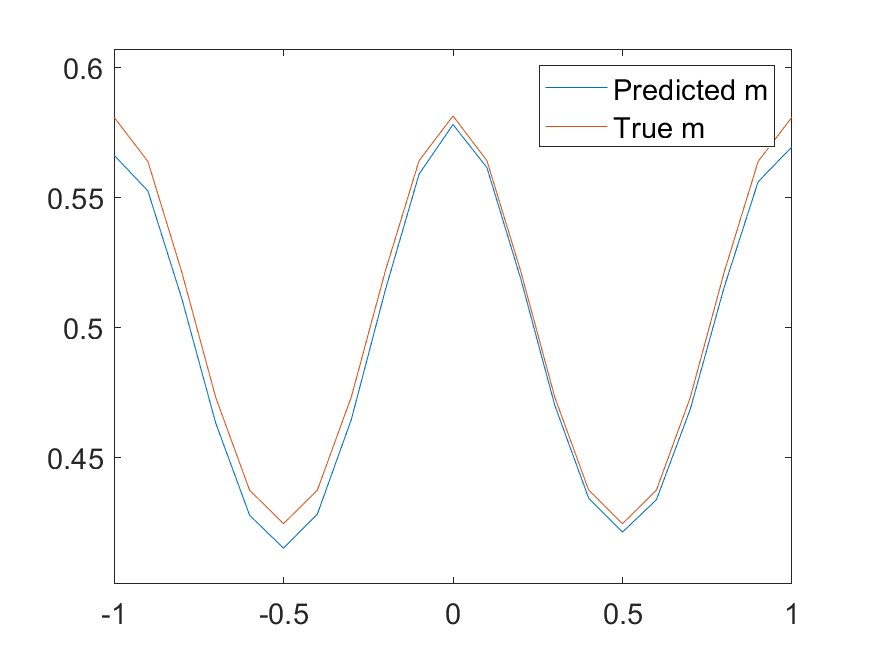}} %
\subfloat[$t=1$]{\includegraphics[width=0.3\textwidth]{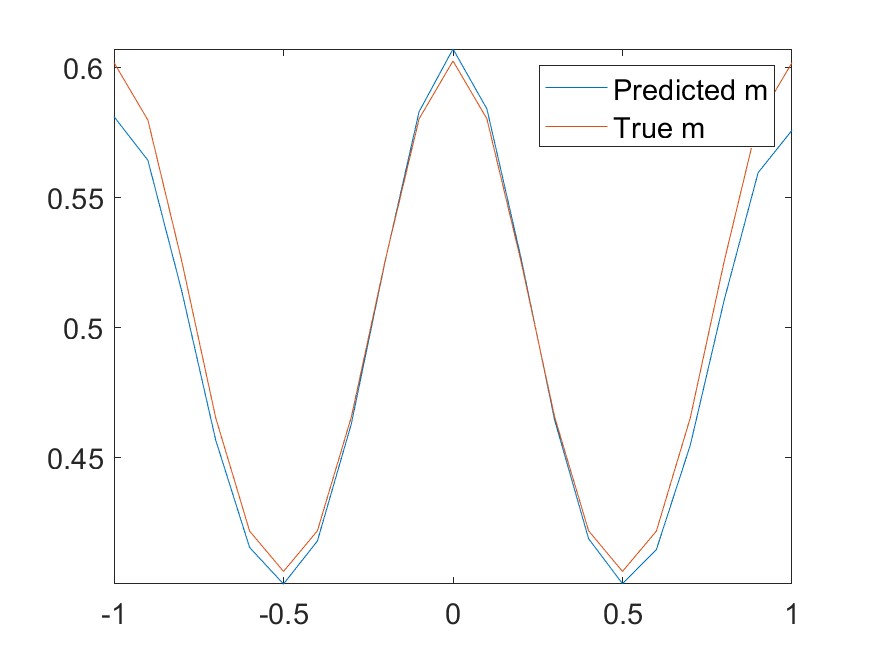}}
\caption{Test 1.2 (an ideal case): True and predicted $m(x,t)$ over time for 
$u(x,t) = 0.1\cos{(2\protect\pi x)}(t+1)$ and $m(x,0)=0.5$.}
\label{fig:2-m}
\end{figure}

\subsection{A more realistic case}

In this case, since the true solution cannot be easily found, we will only
show the predicted solution $u(x,t)$, $m(x,t)$ and the relative cost (\ref%
{num:rel-cost}) at the solution. This is a realistic scenario when the
initial data $u(x,0)$ and $m(x,0)$ are given, but the solution of the MFG
system is not known. Furthermore, it is also unknown whether or not the
chosen initial data are traces of any solution of the MFG system. The goal
is to predict the solution $u(x,t)$ and $m(x,t)$ using the convexification
method and evaluate the quality of the prediction by the relative cost (\ref%
{num:rel-cost}). Also, for the sake of brevity, we will not show the graphs
of the first order optimality.

\textbf{Test 2.1:} We choose $u(x,0)=(x^{2}-1)^{2}$ and $m(x,0)$ a Gaussian
defined as 
\begin{equation*}
m(x,0)=%
\begin{cases}
5.57\ \text{exp}\left( -{0.4^{2}}/(x^{2}-0.4^{2})\right) , & \text{if }%
|x|<0.4, \\ 
0, & \text{otherwise}.%
\end{cases}%
\end{equation*}%
The constant $5.57$ is the normalization constant. The predicted $u(x,t)$
and $m(x,t)$ are shown in Figures \ref{fig:C21-u} and \ref{fig:C21-m},
respectively. The relative cost at the solution is shown in Figure \ref%
{fig:costs} (a).

\begin{figure}[ht!]
\centering
\subfloat[$t=0$]{\includegraphics[width=0.3\textwidth]{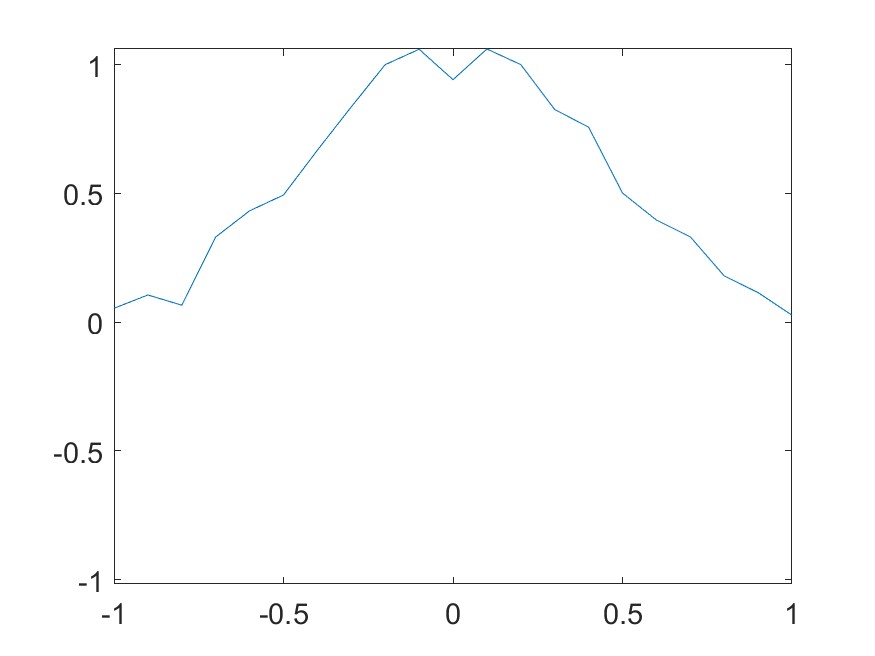}} %
\subfloat[$t=0.6$]{\includegraphics[width=0.3\textwidth]{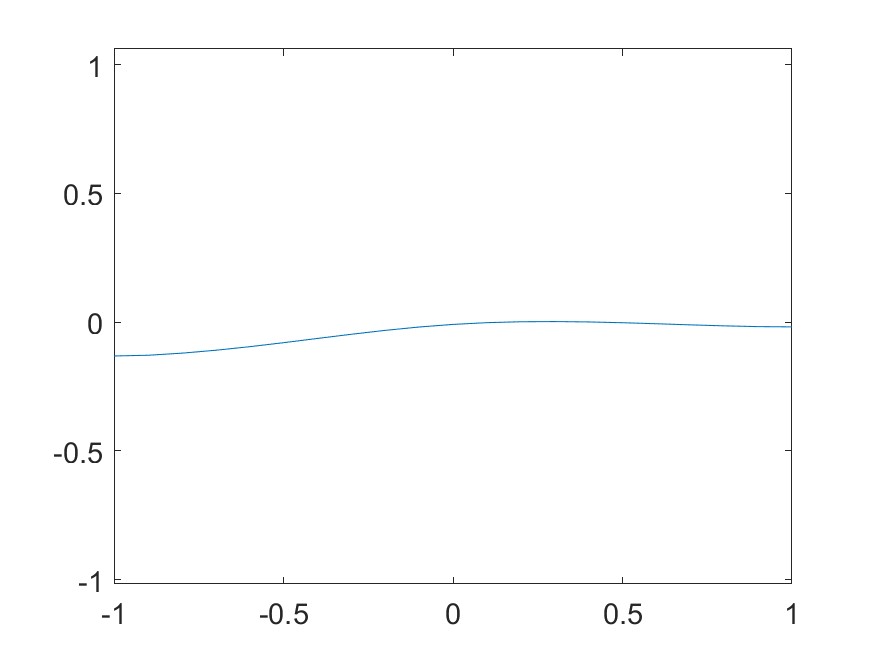}} %
\subfloat[$t=1$]{\includegraphics[width=0.3\textwidth]{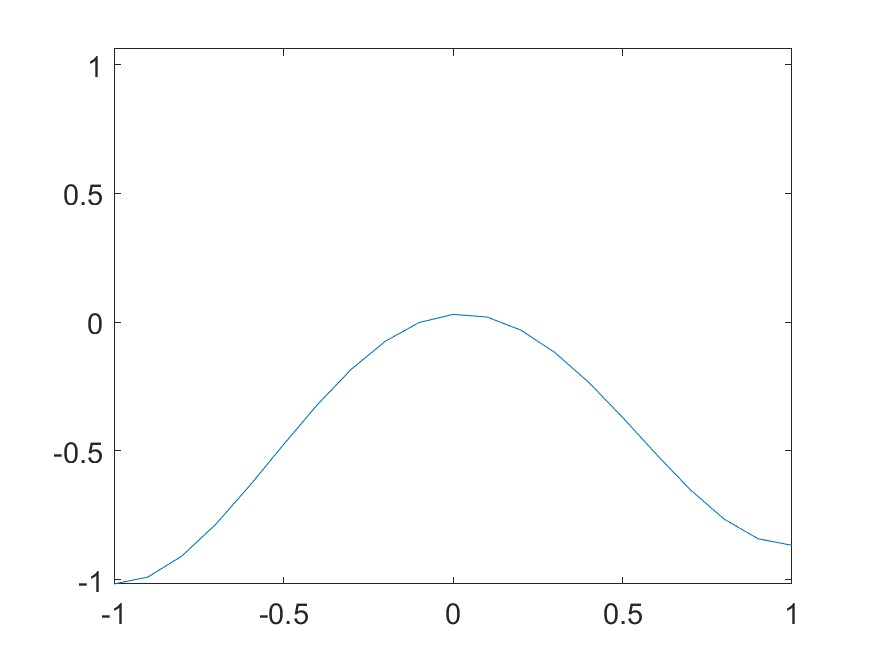}}
\caption{Test 2.1 (a more realistic case): Predicted $u(x,t)$ over time for $%
u(x,0) = (x^2-1)^2$ and $m(x,0)$ a Gaussian.}
\label{fig:C21-u}
\end{figure}

\begin{figure}[ht!]
\centering
\subfloat[$t=0$]{\includegraphics[width=0.3\textwidth]{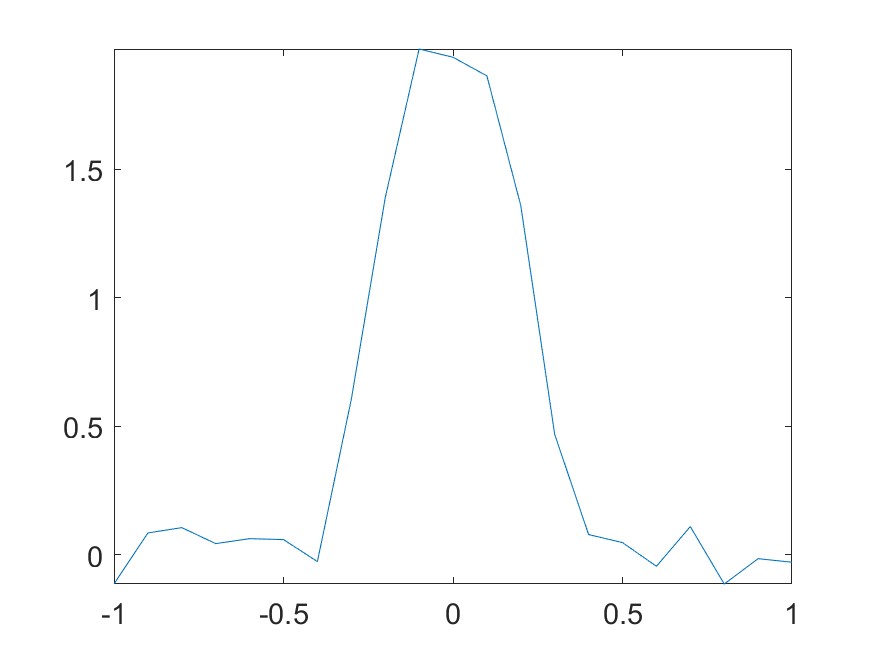}} %
\subfloat[$t=0.6$]{\includegraphics[width=0.3\textwidth]{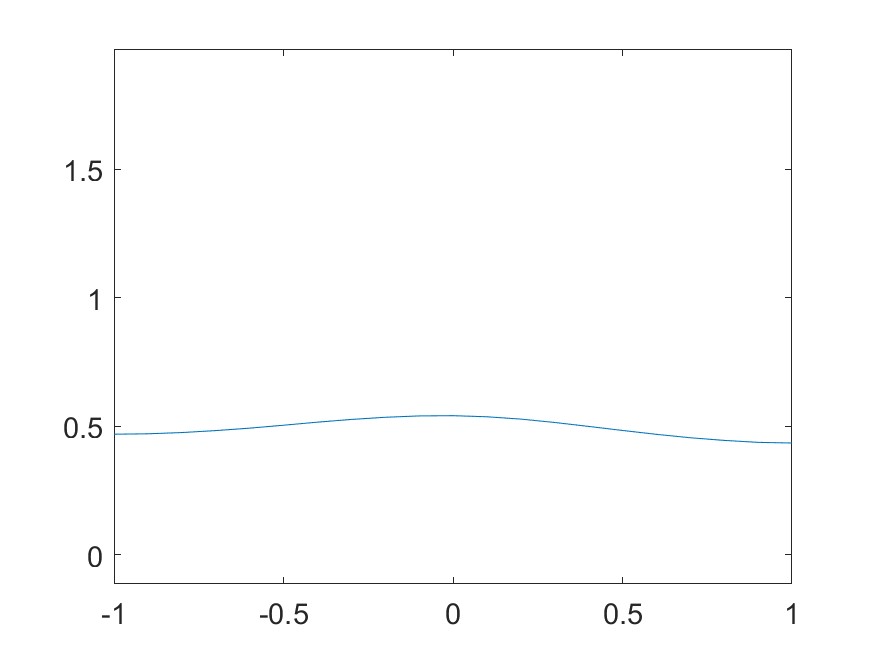}} %
\subfloat[$t=1$]{\includegraphics[width=0.3\textwidth]{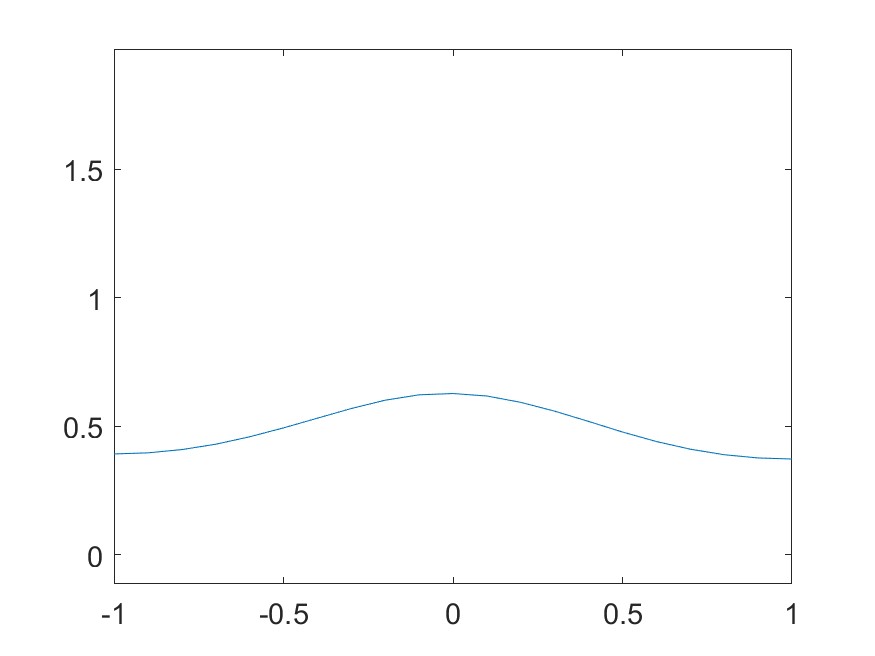}}
\caption{Test 2.1 (a more realistic case): Predicted $m(x,t)$ over time for $%
u(x,0) = (x^2-1)^2$ and $m(x,0)$ a Gaussian.}
\label{fig:C21-m}
\end{figure}

\textbf{Test 2.2:} We choose $u(x,0)=\cos {(2\pi x)}$ and $m(x,0)=0.5$. The
predicted $u(x,t)$ and $m(x,t)$ are shown in Figures \ref{fig:C22-u} and \ref%
{fig:C22-m}, respectively. The relative cost at the solution is shown in
Figure \ref{fig:costs} (b).

\begin{figure}[ht!]
\centering
\subfloat[$t=0$]{\includegraphics[width=0.3\textwidth]{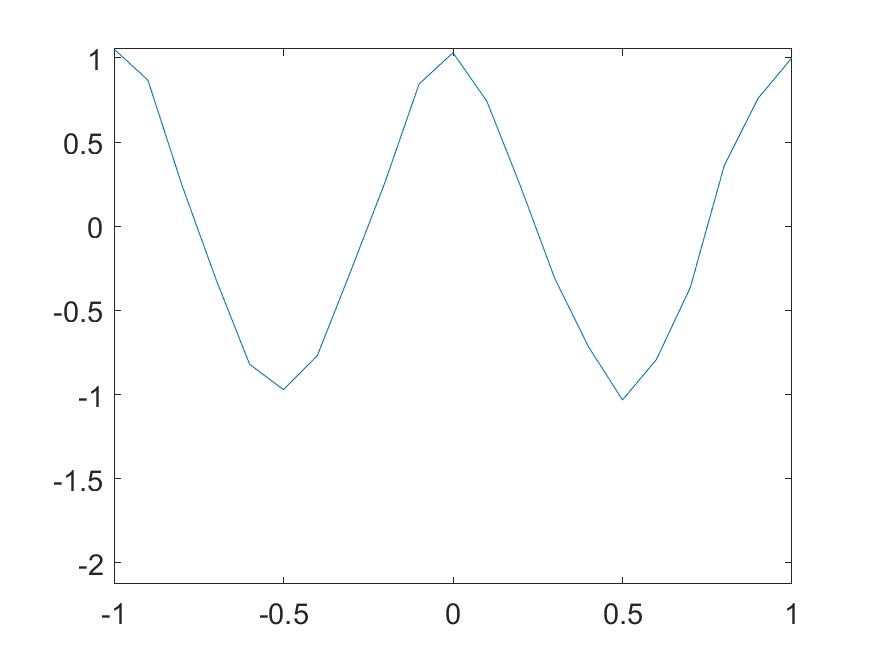}} %
\subfloat[$t=0.6$]{\includegraphics[width=0.3\textwidth]{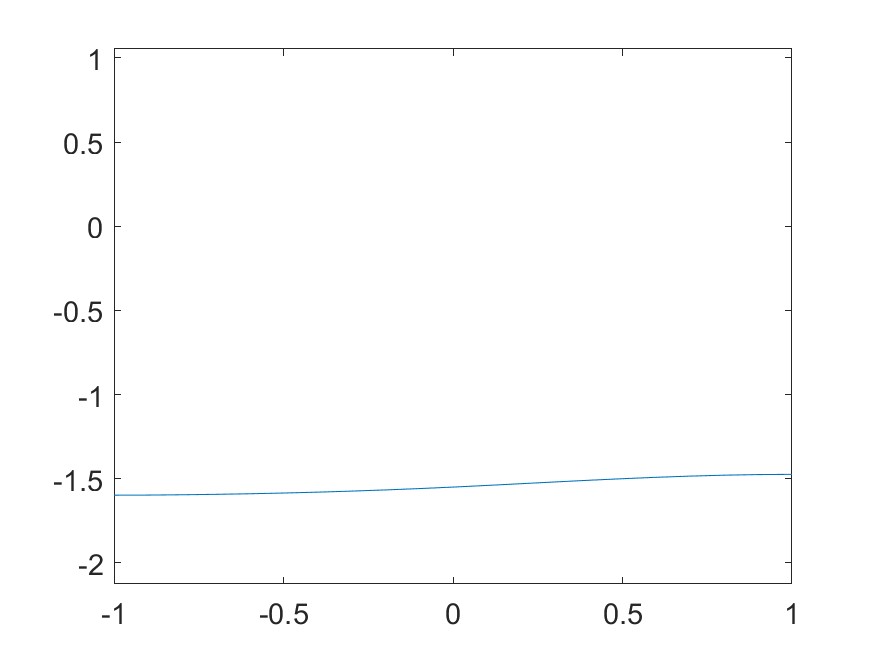}} %
\subfloat[$t=1$]{\includegraphics[width=0.3\textwidth]{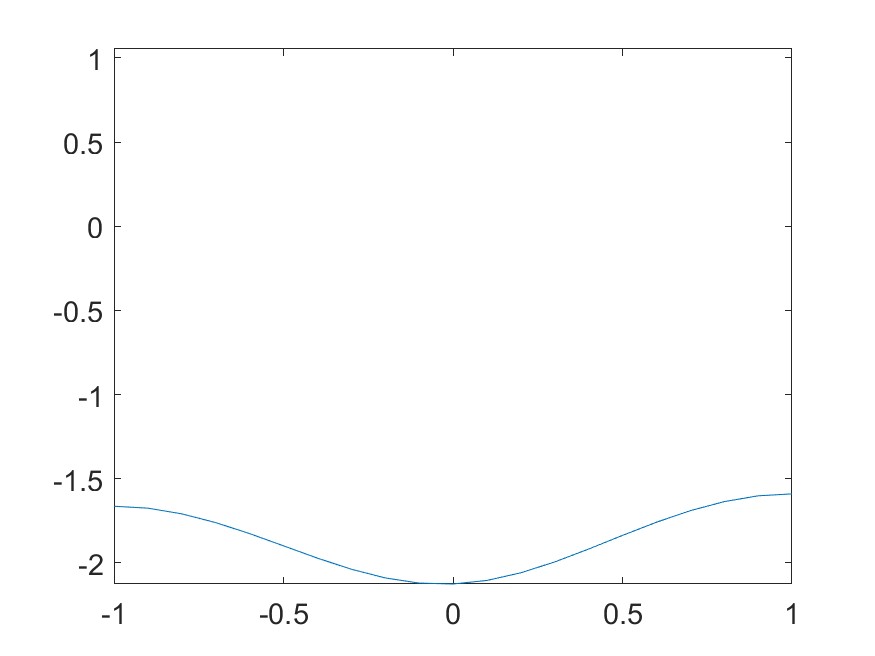}}
\caption{Test 2.2 (a more realistic case): Predicted $u(x,t)$ over time for $%
u(x,0) = \cos{(2\protect\pi x)}$ and $m(x,0)=0.5$.}
\label{fig:C22-u}
\end{figure}

\begin{figure}[ht!]
\centering
\subfloat[$t=0$]{\includegraphics[width=0.3\textwidth]{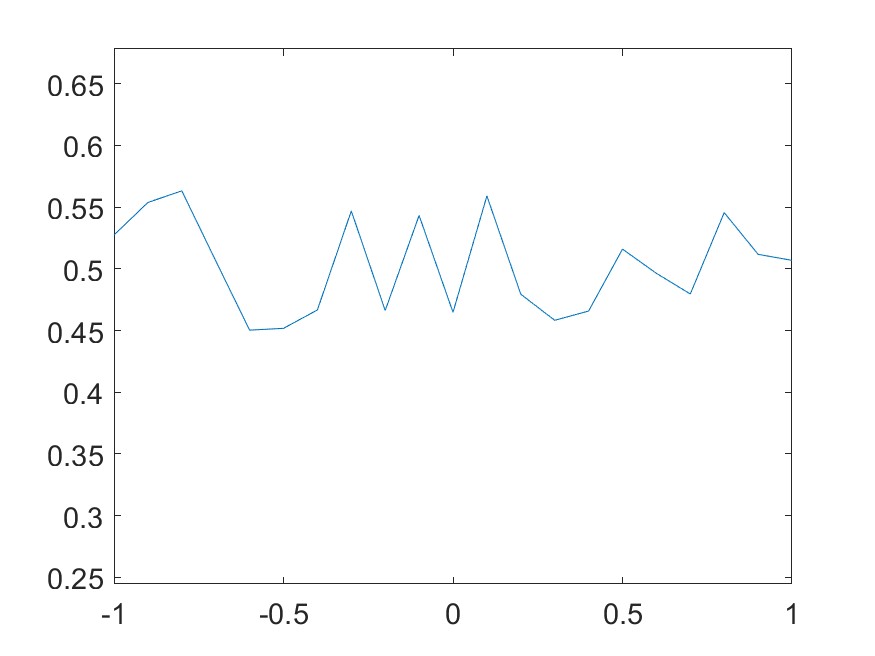}} %
\subfloat[$t=0.6$]{\includegraphics[width=0.3\textwidth]{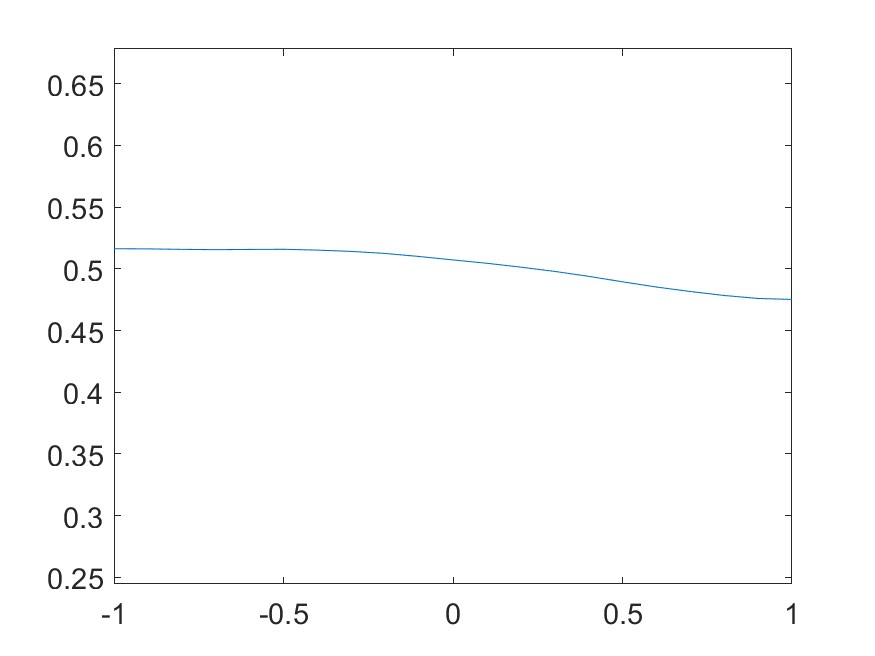}} %
\subfloat[$t=1$]{\includegraphics[width=0.3\textwidth]{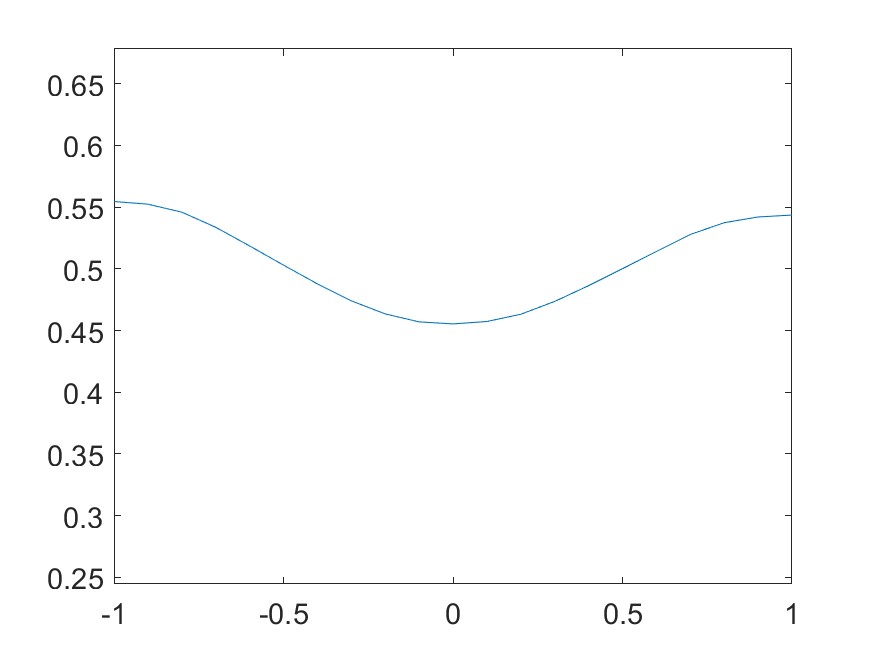}}
\caption{Test 2.2 (a more realistic case): Predicted $m(x,t)$ over time for $%
u(x,0) = \cos{(2\protect\pi x)}$ and $m(x,0)=0.5$.}
\label{fig:C22-m}
\end{figure}

\subsection{The Case Mimicking the Real-Life Data}

In this test, we choose the initial data $u(x,0)$ and $m(x,0)$ to be
somewhat close to reality. The goal is to estimate how well the
convexification method can predict the solution of the MFG system in a
real-life scenario. In fact, we can computationally simulate the data, which
are close to the real ones for the public sentiments. The topic of
applications of the technique developed in this paper is the subject of our
future publication and is, therefore, outside of the scope of this paper.

\textbf{Test 3.1:} We choose $u(x,0)$ as a smooth transition from $-0.5$ to $%
0.5$ defined as 
\begin{equation*}
u(x,0)=\frac{\tau ((1+x)/2)}{\tau ((1+x)/2)+\tau ((1-x)/2)}-0.5,\ \text{%
where }\tau (x)=%
\begin{cases}
\text{exp}(-1/x^{2}), & \text{if }x>0, \\ 
0, & \text{if }x\leq 0.%
\end{cases}%
.
\end{equation*}%
and $m(x,0)$ as a decentered Gaussian defined as 
\begin{equation*}
m(x,0)=%
\begin{cases}
5.57\ \text{exp}\left( -\frac{0.4^{2}}{(x-0.5)^{2}-0.4^{2}}\right) , & \text{%
if }|x-0.5|<0.4, \\ 
0, & \text{otherwise}.%
\end{cases}%
\end{equation*}%
The predicted $u(x,t)$ and $m(x,t)$ are shown in Figures \ref{fig:C23-u} and %
\ref{fig:C23-m}, respectively. The relative cost at the solution is shown in
Figure \ref{fig:costs} (c).

\begin{figure}[ht!]
\centering
\subfloat[$t=0$]{\includegraphics[width=0.3\textwidth]{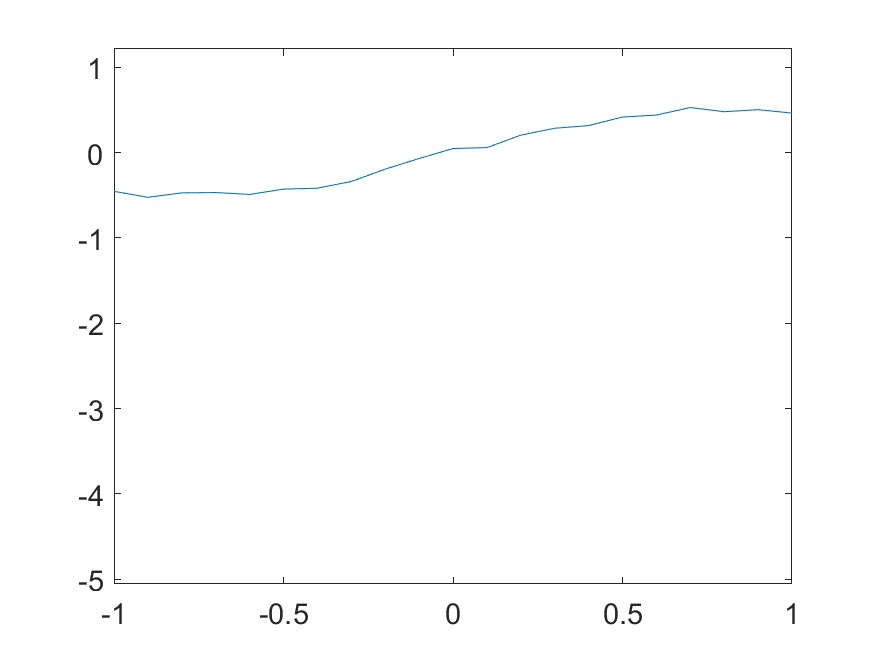}} %
\subfloat[$t=0.6$]{\includegraphics[width=0.3\textwidth]{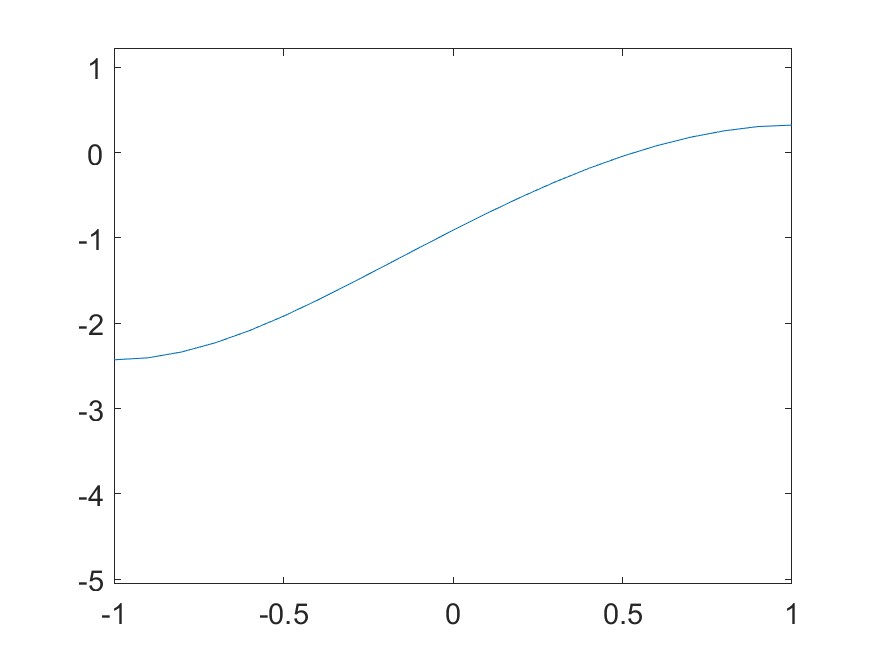}} %
\subfloat[$t=1$]{\includegraphics[width=0.3\textwidth]{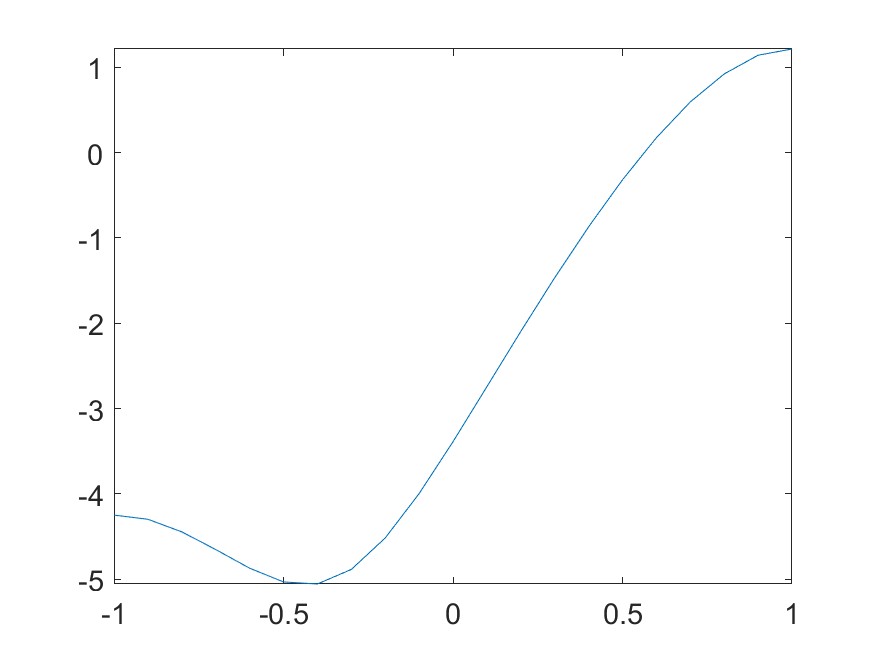}}
\caption{Test 3.1 (mimicking the reality): Predicted $u(x,t)$ over time for $%
u(x,0)$ as a smooth transition from $-0.5$ to $0.5$ and $m(x,0)$ as a
decentered Gaussian.}
\label{fig:C23-u}
\end{figure}

\begin{figure}[ht!]
\centering
\subfloat[$t=0$]{\includegraphics[width=0.3\textwidth]{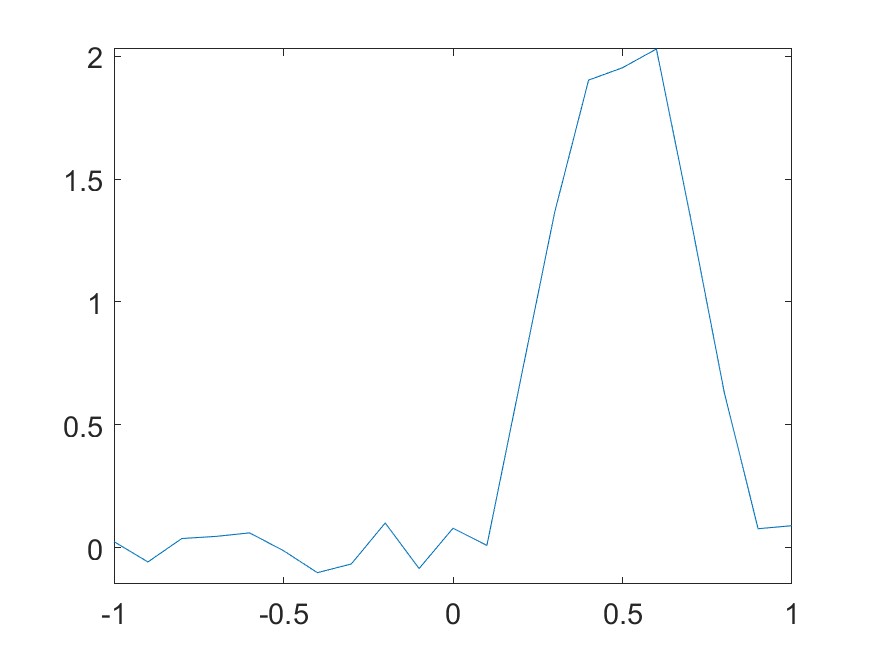}} %
\subfloat[$t=0.6$]{\includegraphics[width=0.3\textwidth]{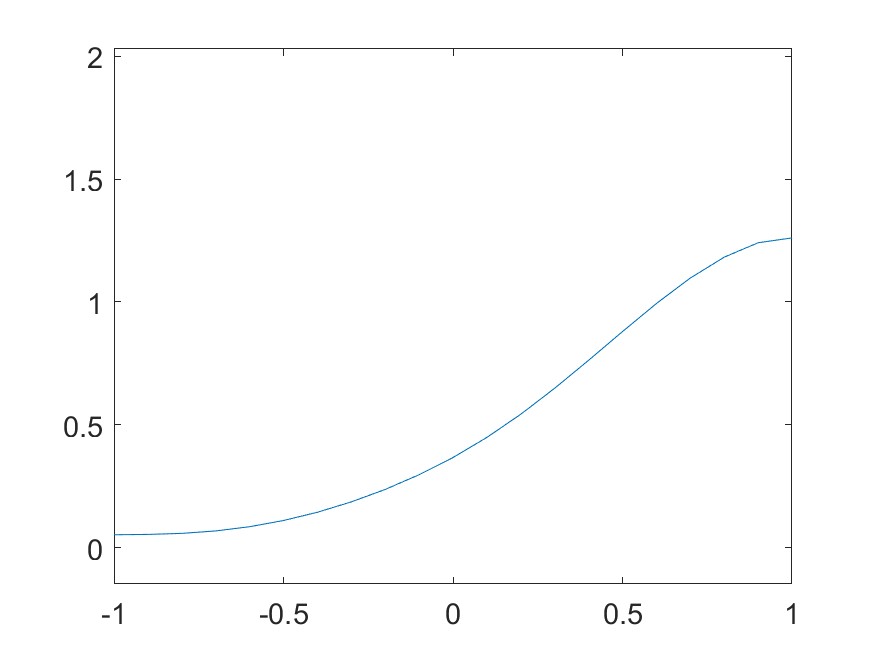}} %
\subfloat[$t=1$]{\includegraphics[width=0.3\textwidth]{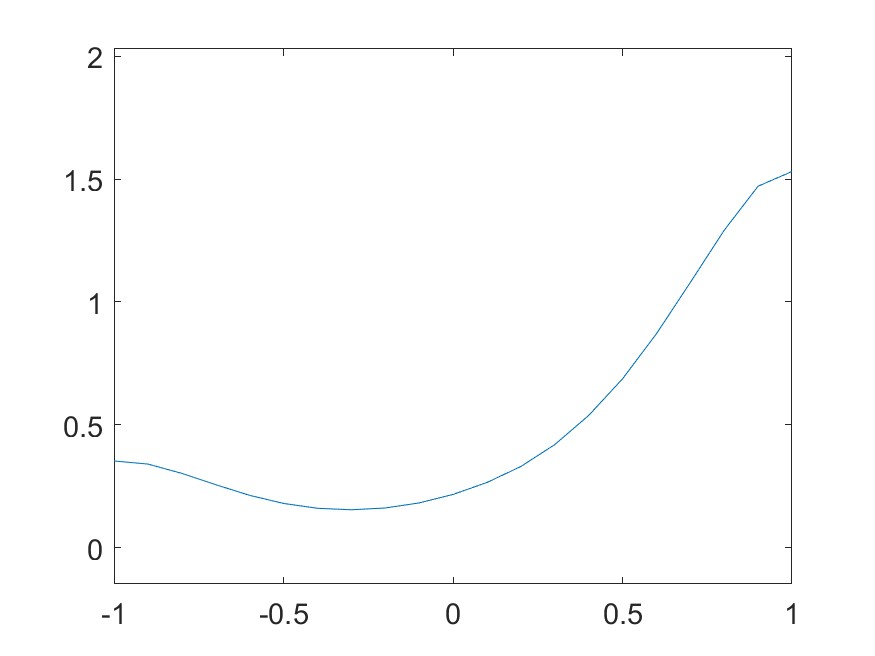}}
\caption{Test 3.1 (mimicking the reality): Predicted $m(x,t)$ over time for $%
u(x,0)$ as a smooth transition from $-0.5$ to $0.5$ and $m(x,0)$ as a
decentered Gaussian.}
\label{fig:C23-m}
\end{figure}

\begin{figure}[ht!]
\centering
\subfloat[Test 2.1]{\includegraphics[width=0.3\textwidth]{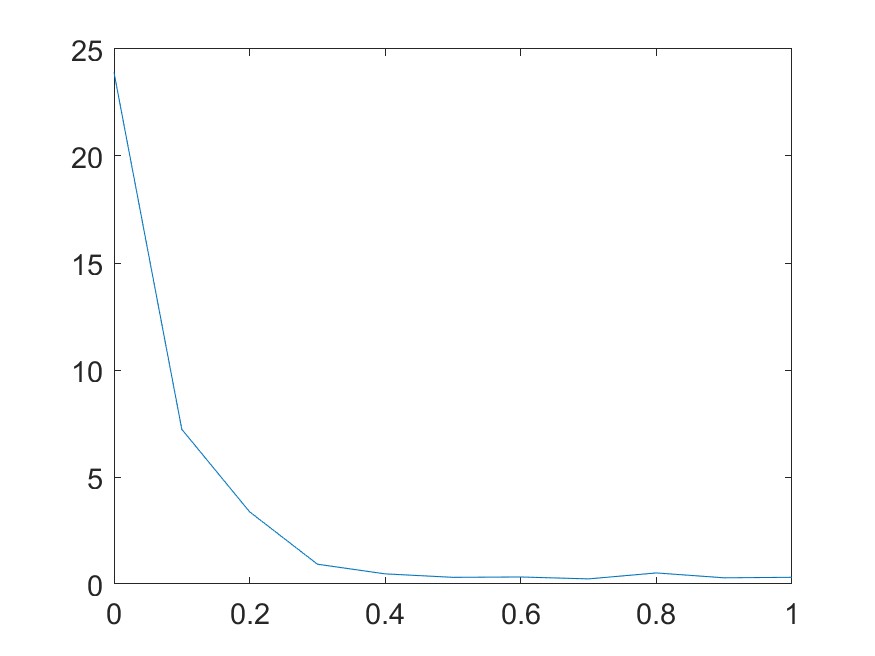}} %
\subfloat[Test 2.2]{\includegraphics[width=0.3\textwidth]{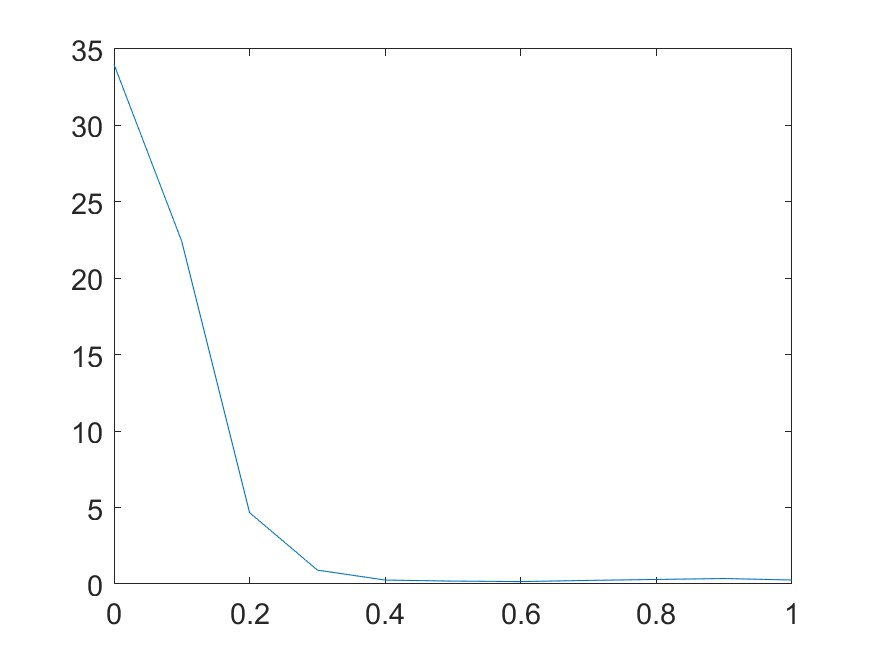}} %
\subfloat[Test 3.1]{\includegraphics[width=0.3\textwidth]{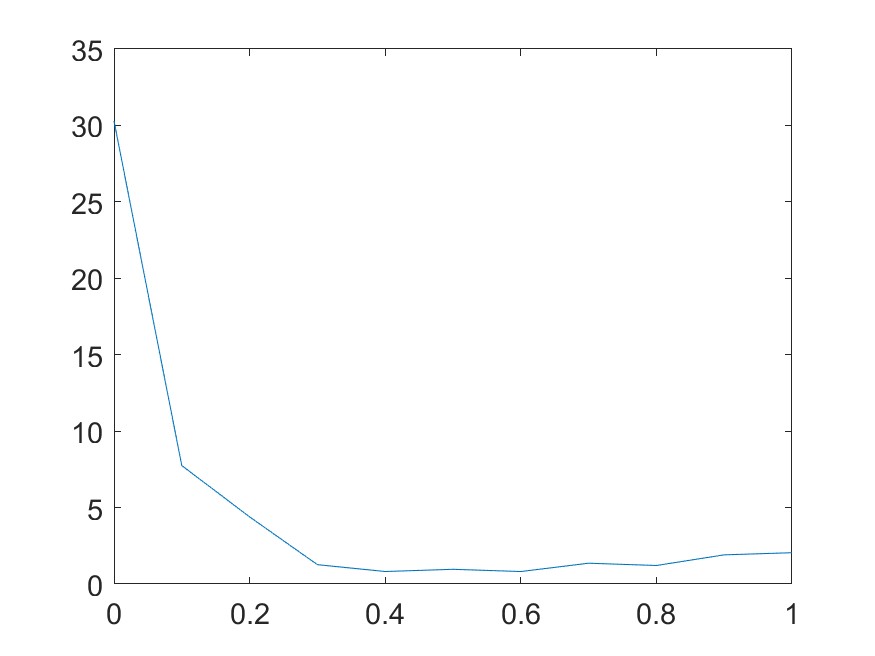}}
\caption{Relative cost (\protect\ref{num:rel-cost}) at the solution for
Tests (a) 2.1, (b) 2.2, and (c) 3.1.}
\label{fig:costs}
\end{figure}

\subsection{An Interesting Observation}

An interesting phenomenon, which one can observe from (\ref{num:rel-cost})
and Figures \ref{fig:1-cost-extended}, \ref{fig:costs} is that for small
times $t\in \left( 0,0.3\right) $, the minimizer of our target functional (%
\ref{5.6}) does not satisfy well to the MFG system (\ref{num:1})-(\ref{num:2}%
) for all considered cases, even for the ideal initial conditions. On the
other hand, it satisfies that system well for $t\in \left[ 0.3,1\right) $.
Finally, the instability for the function $u(x,t)$ takes over for $t>1$, see
Figure \ref{fig:1-cost-extended}.

\section*{Acknowledgment}

This research effort was supported by the National Science Foundation grant
DMS 2436227.

\bibliographystyle{siamplain}
\bibliography{references}

\end{document}